\def\algmaxmin{\texttt{MaxMinApx}}
\def\algBoundedGL{\texttt{BoundedGL}}
\def\algDoubleBoundedGL{\texttt{DoubleBoundedGL}}
\def\algBalancedPartition{\texttt{BalancedPartition}}
\def\algTransfer{\texttt{TransferVertices}}
\def\maxminbcp{\textsc{Max-Min BCP}}
\def\minmaxbcp{\textsc{Min-Max BCP}}
\def\bigO{\mathcal{O}}
\def\O{\mathcal{O}}
\def\calS{\mathcal{S}}
\def\calN{\mathcal{N}}
\def\calU{\mathcal{U}}
\def\calV{\mathcal{V}}
\def\nu{\calN^{\bar{g}}}
\def\calQ{\mathcal{Q}}
\def\T{\mathcal{T}}
\def\wmax{w_{\max}}
\def\vmax{v_{\max}}
\def\bcp{BCP{$_k$}}
\def\cvp{\textsc{CVP}}
\def\og{\overline{G}}
\def\UpBoundGL{\max\{r,3\}}
\def\gyori{Gy\H{o}ri}
\def\lovasz{Lov\'{a}sz}
\newenvironment{tightcenter}
 {\parskip=0pt\par\nopagebreak\centering}
 {\par\noindent\ignorespacesafterend}
\newlength{\RoundedBoxWidth}
\newsavebox{\GrayRoundedBox}
\newenvironment{GrayBox}[1]%
{\setlength{\RoundedBoxWidth}{\linewidth-4.5ex}
\def\boxheading{#1}
\begin{lrbox}{\GrayRoundedBox}
\begin{minipage}{\RoundedBoxWidth}%
}{%
\end{minipage}
\end{lrbox}%
\begin{tightcenter}%
\begin{tikzpicture}%
\node(Text)[draw=black!20,fill=white,rounded corners,%
inner sep=2ex,text width=\RoundedBoxWidth]%
{\usebox{\GrayRoundedBox}};
\coordinate(x) at (current bounding box.north west);
\node [draw=white,rectangle,inner sep=3pt,anchor=north west,fill=white]
at ($(x)+(10.5pt,.75em)$) {\boxheading};
\end{tikzpicture}
\end{tightcenter}\vspace{0pt}%
\ignorespacesafterend
}
\newenvironment{problem}[1]{\noindent\ignorespaces%
\FrameSep=6pt%
\parindent=0pt%
\vspace*{.5em}
\begin{GrayBox}{\textsc{#1}}%
\begin{tabular*}{\columnwidth}{@{\hspace{.25em}} >{\itshape} p{1.1cm} p{0.8\columnwidth} @{}}%
}{
\end{tabular*}%
\end{GrayBox}%
\vspace*{-.5em}
\ignorespacesafterend
}
\title{Connected $k$-partition of $k$-connected graphs and $c$-claw-free graphs} 
\titlerunning{Connected $k$-part. of $k$-conn. \& $c$-claw-free graphs} 
\author{Ralf Borndörfer}{Zuse Institute Berlin, Berlin}{borndoerfer@zib.de}{https://orcid.org/0000-0001-7223-9174}{}
\author{Katrin Casel}{Hasso Plattner Institute, University of Potsdam, Germany}{Katrin.Casel@hpi.de}{https://orcid.org/0000-0001-6146-8684}{}
\author{Davis Issac}{Hasso Plattner Institute, University of Potsdam, Germany}{Davis.Issac@hpi.de}{https://orcid.org/0000-0001-5559-7471}{}
\author{Aikaterini Niklanovits}{Hasso Plattner Institute, University of Potsdam, Germany}{Aikaterini.Niklanovits@hpi.de}{https://orcid.org/0000-0002-4911-4493}{}
\author{Stephan Schwartz}{Zuse Institute Berlin, Berlin}{schwartz@zib.de}{https://orcid.org/0000-0003-2901-5065}{}
\author{Ziena Zeif}{Hasso Plattner Institute, University of Potsdam, Germany}{Ziena.Zeif@hpi.de}{https://orcid.org/0000-0003-0378-1458}{}
\authorrunning{R. Borndörfer,K. Casel, D. Issac A. Niklanovits, S. Schwartz and Z. Zeif} 
\keywords{connected partition, \gyori-\lovasz, balanced partition, approximation algorithms} 
\begin{document}

	\maketitle
	
	\begin{abstract}
	
		A \emph{connected partition} is a partition of the vertices of a graph into sets that induce connected subgraphs. 
		Such partitions naturally occur in many application areas such as road networks, and image processing.
		In these settings, it is often desirable to partition into a fixed number of parts of roughly of the same size or weight.
		The resulting computational problem  is called Balanced Connected Partition (BCP).
		The two classical objectives for BCP are to maximize the weight of the smallest, or minimize the weight of the largest component.	We study BCP on $c$-claw-free graphs, the class of graphs that do not have $K_{1,c}$ as an induced subgraph,
		and present efficient $(c-1)$-approximation algorithms for both objectives.
		In particular, for $3$-claw-free graphs, also simply known as claw-free graphs, we obtain a $2$-approximation. 
		Due to the claw-freeness of line graphs, this also implies a $2$-approximation for the edge-partition version of BCP in general graphs.

		A harder connected partition problem arises from demanding a connected partition into $k$ parts that have (possibly) heterogeneous target weights $w_1,\ldots,w_k$. 
		In the 1970s Gy\H{o}ri and Lov\'{a}sz showed that if $G$ is $k$-connected and the target weights sum to the total size of $G$, such a partition exists. However, to this day no polynomial algorithm to compute such partitions exists for $k>4$.
		Towards finding such a partition $T_1,\ldots, T_k$ in $k$-connected graphs for general $k$, we show how to efficiently compute connected partitions that at least approximately meet the target weights, subject to the mild assumption that each $w_i$ is greater than the weight of the heaviest vertex. In particular, we give a 3-approximation for both the lower and the upper bounded version i.e.~we guarantee that each $T_i$ has weight at least $\frac{w_i}{3}$ or that each $T_i$ has weight most $3w_i$, respectively. Also, we present a both-side bounded version that produces a connected partition where each $T_i$ has size at least $\frac{w_i}{3}$ and at most $\max(\{r,3\}) w_i$, where $r \geq 1$ is the ratio between the largest and smallest value in $w_1, \dots, w_k$.
		In particular for the balanced version, i.e.~$w_1=w_2=, \dots,=w_k$, this gives a partition with $\frac{1}{3}w_i \leq w(T_i) \leq 3w_i$.  
	\end{abstract}
	
	\section{Introduction}
%

Partitioning a graph into connected subgraphs is a problem that arises in many application areas such as parallel processing, road network decomposition, image processing, districting problems, and robotics \cite{lucertini1993most,mohring2007partitioning,bulucc2016recent,vulnerability,zhou2019balanced}.
Often in these applications, it is required to find a partition into a specified number $k$ of connected subgraphs. 
For instance, in the parallel processing applications, the number of processors is restricted, and in robotics applications, the number of robots available is restricted. 
Formally, we call a partition $T_1,T_2,\cdots,T_k$ of the vertex set of graph, a \emph{connected $(k$-$)$partition}, if the subgraph induced by the vertices in $T_i$ is connected for each $1\leq i\leq k$.

The typical modeling objective in such connected partition problems is to balance sizes among the $k$ parts.
Sometimes one needs to consider a vertex-weighted generalization e.g.~weights representing the required amount of work at the entity corresponding to the vertex. 
The two classical balancing objectives for such $k$-partitions are to maximize the total weight of the lightest part, or to minimize the weight of the heaviest part. 
These objectives yield the following two versions of the \emph{balanced connected partition problem} (BCP).
\begin{problem}{Max-Min BCP (Min-Max BCP)}
	Input:&	A vertex-weighted graph $G=(V,E,w)$ where $w:V\rightarrow \mathbb N$, and $k \in \mathbb N$.\\
	Task:& Find a connected $k$-partition $T_1, \dots, T_k$ of $G$ maximizing  $\min_{i\in [k]}w(T_i)$ (minimizing $\max_{i\in[k]}w(T_i)$ resp.).
\end{problem}

On general graphs, both variants of BCP are $\textsf{NP}$-hard \cite{camerini1983complexity}, and hence the problems have been mostly studied from the viewpoint of approximation algorithms \cite{casel2020balanced,chataigner2007approximation,chen2020approximation,chen2019approximation,chlebikova1996approximating}.
Most of the known results are for small values of $k$, and there are some results also for special classes like grid graphs or graphs of bounded treewidth  (see related work section for further details).
The currently best known polynomial-time approximation for general graphs for any $k$ is a $3$-approximation for both \textsc{Max-Min} and \textsc{Min-Max BCP} by Casel et.~al.~\cite{casel2020balanced}.

Intuitively, an obstacle for getting a balanced connected partition is a large induced star, i.e.~a tree with one internal node and $c$ leaves, denoted by $K_{1,c}$.
We say a graph is \emph{$c$-claw-free} or \emph{$K_{1,c}$-free} if it does not contain an induced $K_{1,c}$ as subgraph. 
For such graphs,
we give a very efficient $(c-1)$-approximation algorithm for both the min-max and max-min objective.
In particular by setting $c=3$, we get a $2$-approximation on $K_{1,3}$-free graphs, better known as \emph{claw-free graphs}.

Claw free graphs have been widely studied by Seymour and  Chudnovsky in a series of seven papers under the name \emph{Claw-free graphs I-VII} (\cite{chudnovsky2007claw}-\cite{chudnovsky2012claw}\nocite{chudnovsky2008claw, chudnovsky2008claw3, chudnovsky2008claw4, chudnovsky2008claw5, chudnovsky2010claw}), who also provide a structure theorem for these graphs~\cite{chudnovsky2005structure}. 
Some interesting examples of such graphs are line graphs, proper circular interval graphs and de-Brujin graphs~\cite{compeau2011apply}.
Apart from their structural properties, claw-free graphs have been studied in the context of obtaining efficient algorithms for several interesting problems, see e.g.~\cite{hermelin2014parameterized,faenza2014solving,cygan2011dominating}.

Although, for $c>3$ our algorithm gives a worse guarantee than the algorithm by Casel et.~al.~\cite{casel2020balanced}, we note that their algorithm runs in $\bigO(\log(X^*) k^2|V||E|)$ time for {\maxminbcp} and in $\mathcal{O}\left(\log\left(X^* \right) |V|\,|E| \left(\log \log X^* \log\left(|V| w_{max} \right) + k^2 \right) \right)$ time for {\minmaxbcp}, where $X^*$ denotes the optimum value and $w_{\max} := \max_{v \in V} w(v)$ the maximum weight of a vertex,
whereas our algorithms give an $\bigO(\log(X^*)|E|)$ runtime for {\maxminbcp} and an $\bigO(|E|)$ runtime for {\minmaxbcp}.
Moreover, our algorithms are less technical and hence much easier to implement.
We prove the following statements.

\begin{restatable}{theorem}{MinMaxTHM}
	\label{thm::MinMax}
	Given a vertex-weighted $K_{1,c}$-free graph $G=(V,E,w)$ and $k \in \mathbb N$,
	a $(c-1)$-approximation for \textsc{Min-Max BCP} can be computed in $\mathcal O(|E|)$ time.
\end{restatable}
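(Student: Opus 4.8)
The plan is to pair two trivial lower bounds on the optimum value $X^*$ with a single post‑order sweep of a DFS tree. First I would record that $X^*\ge w_{\max}:=\max_{v\in V}w(v)$, since the heaviest vertex lies in some part, and $X^*\ge\lceil w(V)/k\rceil$, by averaging over the $k$ parts; set $L:=\max\{w_{\max},\lceil w(V)/k\rceil\}$, so that $X^*\ge L$. After disposing of the trivially infeasible instances and reducing to the case that $G$ is connected with $|V|\ge k$ (handling the components one at a time, with the $k$ target parts suitably distributed among them, is routine), the task becomes: construct a connected $k$-partition all of whose parts have weight at most $(c-1)L\le(c-1)X^*$.

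The machinery is a DFS spanning tree $F$ of $G$, rooted arbitrarily. The sole role of $K_{1,c}$-freeness is the observation that two children of a vertex $v$ in $F$ lie in distinct DFS subtrees and hence, since a DFS tree has no cross edges, are non-adjacent in $G$; thus $v$ together with $c$ of its children would induce $K_{1,c}$, so every vertex has at most $c-1$ children in $F$. I would then traverse $F$ in post-order, maintaining the subtree $F'$ of not-yet-assigned (active) vertices; $F'$ stays a single subtree rooted at the DFS root throughout, because we only ever delete full pendant subtrees of $F'$. When $v$ is reached, its active subtree $R_v$ consists of $v$ together with the active subtrees of its active children, each of which has weight below $L$ (otherwise that child's subtree would have been finalized when the child was processed); hence $w(R_v)<w_{\max}+(c-1)L\le cL$. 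If $w(R_v)\ge L$, we cut: if $w(R_v)\le(c-1)L$ we finalize the whole $R_v$ as one part; if $w(R_v)>(c-1)L$ we repeatedly detach the heaviest active child-subtree of $v$ as its own part until $w(R_v)\le(c-1)L$ and then finalize what is left of $R_v$. When the sweep finishes, the remaining active subtree containing the root, if nonempty, becomes a last part.

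Next I would check a handful of elementary claims, all driven by the at-most-$(c-1)$-children bound. Every detached part has weight below $L$; at most $c-2$ detachments occur at any $v$ (once a single child remains, $w(R_v)\le 2L\le(c-1)L$), so a vertex at which detaching happens yields at most $c-1$ parts; and since the final detachment decreases $w(R_v)$ by less than $L$, the finalized remnant has weight in $((c-2)L,(c-1)L]\subseteq[L,(c-1)L]$. Consequently every part has weight at most $(c-1)L$. For the count, every finalized part arising without detachment has weight at least $L$, each vertex where detachment occurs yields at most $c-1$ parts of total weight exceeding $(c-1)L$, and the leftover root-part, if present, has positive weight; summing these contributions against $w(V)\le kL$ forces the number of parts produced to be at most $k$. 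Finally, as long as fewer than $k$ parts exist I would split any part having at least two vertices into two connected parts (remove a leaf of a spanning tree of that part), which never increases any part's weight, until exactly $k$ parts remain; since $|V|\ge k$ this succeeds.

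The DFS, the post-order sweep (each vertex inspecting only its $\le c-1$ active children, among which a few maxima suffice), and the splitting phase all run in $\mathcal{O}(|V|+|E|)=\mathcal{O}(|E|)$ time. The step I expect to be the crux is the second cutting rule — verifying that peeling heaviest children always lands $w(R_v)$ inside the target window $[L,(c-1)L]$, which is exactly where $w(R_v)<cL$ is combined with the at-most-$(c-1)$-children fact — together with the charging argument that keeps the total number of parts at most $k$ despite the additional parts that peeling spawns. The feasibility reduction and the treatment of disconnected $G$ are comparatively routine.
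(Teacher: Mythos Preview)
Your proof is correct, but it takes a genuinely different route from the paper's. Both arguments start the same way: set $\lambda=L=\max\{w_{\max},\lceil w(V)/k\rceil\}\le X^*$, build a DFS tree, and use $K_{1,c}$-freeness to bound the number of children by $c-1$. The divergence is at a vertex $v$ whose pending subtree weighs more than $(c-1)L$. The paper (Lemma~\ref{lemma::boundedSubgraph}) invokes $K_{1,c}$-freeness a second time: the parent $u$, the vertex $v$, and $v$'s $c-1$ children would induce a $K_{1,c}$ unless some non-tree edge $uv_j$ exists, and that edge is used to reattach $T_{v_j}$ to $u$ while extracting $\{v\}\cup\bigcup_{i\ne j}T_{v_i}$ as a single part of weight in $[\lambda,(c-1)\lambda)$. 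Because every extracted part (except possibly the last) then weighs at least $\lambda$, the bound $m\le k$ follows from $w(G)>(m-1)\lambda$. You skip the non-tree-edge trick entirely: you peel a child subtree off as its own (light) part and then charge the detached part and the remnant together---their combined weight exceeds $(c-1)L$ while their number is at most $c-1$, so the average is still at least $L$ per part and the same global count goes through.

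Your argument is arguably the simpler one for \textsc{Min-Max BCP}. What the paper's construction buys is that every extracted part, not just on average, has weight at least $\lambda$; this lower bound is irrelevant here but is precisely what the paper reuses for \textsc{Max-Min BCP} (Theorem~\ref{thm::MaxMin}), where your light detached parts would be fatal. So the two proofs are equally valid for the present theorem, but only the paper's {\algBalancedPartition} transfers to the companion result. (As a side remark, your ``at most $c-2$ detachments'' is correct but loose: since $w(R_v)>(c-1)L$ forces $v$ to have exactly $c-1$ active children, a single detachment already drops $w(R_v)$ below $(c-1)L$.)
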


\begin{restatable}{theorem}{MaxMinTHM}
	\label{thm::MaxMin}
	Given a vertex-weighted $K_{1,c}$-free graph $G=(V,E,w)$ and $k \in \mathbb N$,
a $(c-1)$-approximation for \textsc{Max-Min BCP} can be computed in time $\O(\log(X^*)|E|)$,
where $X^*$ is the optimum value.
\end{restatable}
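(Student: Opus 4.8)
The plan is to binary-search the right target weight. Assume $G$ has at most $k$ connected components (otherwise no connected $k$-partition exists). For a positive integer $\lambda$, run the following linear-time procedure $\texttt{Grow}(\lambda)$, with internal target $\mu:=\lambda/(c-1)$: first pull out every vertex of weight $\ge\mu$ as a singleton part; on the remaining ($K_{1,c}$-free) graph, in which every vertex is now lighter than $\mu$, root a depth-first search forest $\widehat F$ and process its vertices in post-order, passing to each parent a connected \emph{remainder} of weight $<\mu$; at a vertex $v$ form the connected set $S$ consisting of $v$ together with the remainders of its children, commit $S$ as a finished chunk if $w(S)\ge\mu$ and otherwise hand $S$ up, finally absorbing the set still held at each root of $\widehat F$ into an adjacent chunk. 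Every singleton and every chunk is connected and of weight $\ge\mu$; moreover, by $K_{1,c}$-freeness a vertex together with its children in $\widehat F$ induces a star, so $\widehat F$ has out-degree at most $c-1$, and hence every committed chunk has weight below $c\mu$ (up to the at most $\mu$ absorbed at a root). Thus $\texttt{Grow}(\lambda)$ returns, in $\O(|E|)$ time, a connected partition of $G$ into $g(\lambda)$ parts, each of weight $\ge\lambda/(c-1)$, and $g$ is non-increasing in $\lambda$.

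With $\texttt{Grow}$ in hand, an exponential search $\lambda=1,2,4,\dots$ followed by a binary search in the last interval finds the largest $\lambda^{*}$ with $g(\lambda^{*})\ge k$. If $g(\lambda^{*})>k$, I merge parts down to exactly $k$: the auxiliary graph on the parts, with an edge whenever $G$ links two of them, is connected because $G$ is, so I take a spanning tree of it and contract $g(\lambda^{*})-k$ edges, each contraction fusing two parts into one connected part; since weights only increase, the resulting connected $k$-partition has all parts of weight $\ge\lambda^{*}/(c-1)$, and this is the output. As this partition certifies $X^{*}\ge\lambda^{*}/(c-1)$, we get $\lambda^{*}\le(c-1)X^{*}$, so the search costs $\O(\log(X^{*})\,|E|)$; and provided $g(X^{*})\ge k$, we also have $\lambda^{*}\ge X^{*}$, whence the output has minimum part weight at least $\lambda^{*}/(c-1)\ge X^{*}/(c-1)$, i.e.\ a $(c-1)$-approximation.

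The whole argument therefore hinges on the inequality $g(X^{*})\ge k$, which is the \emph{main obstacle}. Starting from an optimal connected $k$-partition $T^{*}_{1},\dots,T^{*}_{k}$ with every $w(T^{*}_{i})\ge X^{*}=(c-1)\mu$ for $\mu=X^{*}/(c-1)$, one must show that $\texttt{Grow}(X^{*})$ commits at least $k$ parts. This is where $K_{1,c}$-freeness enters beyond bounding the out-degree of $\widehat F$: since every committed chunk has weight strictly below $c\mu$, the weight a chunk spends in excess of $\mu$ is controlled by the $c-1$ branching bound, and I would prove the claim by charging the $k$ heavy optimum parts to the committed pieces (each $T^{*}_{i}$ has weight $\ge(c-1)\mu$ and can therefore be accounted for only by sufficiently many committed chunks and singletons), so that having fewer than $k$ committed pieces would force $w(V)<k(c-1)\mu\le\sum_{i}w(T^{*}_{i})$, a contradiction. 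I expect the delicate points to be purely in the bookkeeping --- the heavy-vertex reservation changes which optimum part a vertex lies in, and the residual sets absorbed at the roots of $\widehat F$ need care --- while the DFS traversal, the merging step, and the exponential-plus-binary search are routine and clearly fit the $\O(\log(X^{*})\,|E|)$ bound. As in \cref{thm::MinMax}, claw-freeness is used to neutralize the large induced stars that obstruct balanced connected partitions, and the factor $c-1$ is exactly the amount of branching it still allows.
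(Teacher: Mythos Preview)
Your overall architecture---separate heavy vertices, chop the rest via a DFS traversal, then binary-search the threshold---matches the paper's. The critical gap is in the chunk-weight upper bound, and it breaks the counting you need for $g(X^*)\ge k$.

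You correctly observe that in a DFS tree of a $K_{1,c}$-free graph every vertex has at most $c-1$ children, so the set $S$ you form at $v$ (the vertex plus its children's remainders, each of weight $<\mu$) satisfies $w(S)<\mu+(c-1)\mu=c\mu$. But then your weight argument does not close: with $g$ pieces each of weight $<c\mu$ you only get $w(V)<g\cdot c\mu$, whereas you assert $w(V)<k(c-1)\mu$. These are inconsistent; from $g<k$ you can only conclude $w(V)<kc\mu=\tfrac{c}{c-1}kX^*$, which does not contradict $w(V)\ge kX^*$. Tracing this through, your scheme yields a $c$-approximation, not $(c-1)$.

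The paper obtains chunks of weight strictly below $(c-1)\lambda$ (with $\lambda$ playing the role of your $\mu$) by a second use of $K_{1,c}$-freeness that your post-order traversal misses. When the subtree $T_v$ is heavy ($w(T_v)\ge(c-1)\lambda$) while all child subtrees are light, $v$ must have exactly $c-1$ children, and then $\{v,\text{parent}(v),v_1,\dots,v_{c-1}\}$ induces a $K_{1,c}$ in the DFS tree; claw-freeness of $G$ forces a non-tree edge from some child $v_j$ to $\text{parent}(v)$. One removes $S=\{v\}\cup\bigcup_{i\neq j}V(T_{v_i})$, which has weight $<w_{\max}+(c-2)\lambda\le(c-1)\lambda$ and $\ge(c-2)\lambda\ge\lambda$, and reconnects the remaining tree via the edge $v_j\,\text{parent}(v)$. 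This tighter bound is exactly what makes the per-component count $|\calV^i|\le|\calS^i|$ go through. The paper also handles light components (your ``absorb at the root'' step) by attaching them to adjacent heavy vertices rather than to chunks, which matters when an entire component of $G-H$ has weight $<\lambda$; and it does not assume $g$ is monotone---the binary search maintains a success/fail bracket and uses only that failure at $X$ certifies $X>X^*$.
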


Since line graphs are $K_{1,3}$-free,
these results directly imply efficient approximations for the following edge-partition versions of BCP.
A $k$-partition of the edges of a graph, is called a \emph{connected edge $k$-partition}, if the subgraph induced by the edges in each part is connected.
In the problem  Min-Max (Max-Min) balanced connected edge partition (BCEP), one searches for a connected edge $k$-partition of an edge-weighted graph minimizing the maximum (resp. maximimzing the minimum) weight of the parts. 
This problem is equivalent to finding a connected $k$-partition of the vertices in the line graph of the input graph.
The best known approximation for BCEP is for graphs with no edge weight larger than $w(G)/2k$. For such graphs, \cite{chu2013linear} give an algorithm that
finds a connected edge $k$-partition, 
such that the weight of the heaviest subgraph is at most twice as large
as the weight of the lightest subgraph, implying
a $2$-approximation for \textsc{Min-Max} and \textsc{Max-Min BCEP}.
In comparison, our algorithms achieve the same approximation guarantee without restrictions on the edge weights.

\begin{corollary}
	\textsc{Min-Max BCEP} and \textsc{Max-Min BCEP}
	have $2$-approximations in polynomial time.
\end{corollary}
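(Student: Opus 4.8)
The plan is to reduce \textsc{Min-Max BCEP} and \textsc{Max-Min BCEP} on a graph $G$ to the corresponding vertex problems on its line graph and then invoke Theorems~\ref{thm::MinMax} and~\ref{thm::MaxMin} with $c=3$. Given an edge-weighted graph $G=(V,E,w)$ and $k\in\mathbb{N}$, I would form the line graph $L(G)$, whose vertex set is $E$, in which two vertices are adjacent exactly when the corresponding edges of $G$ share an endpoint, and assign to each vertex $e$ of $L(G)$ the weight $w(e)$. Constructing $L(G)$ takes time polynomial in the size of $G$, since $L(G)$ has $|E|$ vertices and at most $\sum_{v\in V}\binom{\deg(v)}{2}=\mathcal{O}(|V|\,|E|)$ edges.

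The two facts that make the reduction work are standard. First, $L(G)$ is $K_{1,3}$-free: if three edges $e_1,e_2,e_3$ of $G$ were pairwise nonadjacent in $L(G)$ but each adjacent to a common edge $e$, then each $e_i$ meets $e$ in one of its two endpoints, so by pigeonhole two of $e_1,e_2,e_3$ would meet $e$ in the same endpoint and hence be adjacent, a contradiction. Second, for any $F\seq E$, the edge-induced subgraph $(V(F),F)$ of $G$ is connected if and only if the vertex set $F$ induces a connected subgraph of $L(G)$; consequently $T_1,\dots,T_k$ is a connected edge $k$-partition of $G$ if and only if it is a connected ($k$-)partition of $L(G)$, and the weight of each part is the same under both readings. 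In particular the two instances have the same optimum value $X^*$ for either objective.

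Putting these together: running the algorithm of Theorem~\ref{thm::MinMax} (respectively Theorem~\ref{thm::MaxMin}) on $L(G)$ with $c=3$ produces, in time polynomial in $|V|$ and $|E|$, a connected partition of $L(G)$ whose objective value is within a factor $c-1=2$ of the optimum; translated back via the bijection above, this is a connected edge $k$-partition of $G$ that is a $2$-approximation for \textsc{Min-Max BCEP} (respectively \textsc{Max-Min BCEP}). There is no genuine obstacle here beyond carefully checking the connectivity equivalence; the only point to watch is that the line-graph construction inflates the instance only polynomially, so that the $\mathcal{O}(|E|)$-type running times of the two theorems, measured in the size of $L(G)$, remain polynomial in the size of $G$ (and likewise $\log(X^*)$ is unchanged).
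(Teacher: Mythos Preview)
Your proposal is correct and follows exactly the approach the paper intends: it derives the corollary directly from Theorems~\ref{thm::MinMax} and~\ref{thm::MaxMin} via the standard facts that line graphs are $K_{1,3}$-free and that connected edge $k$-partitions of $G$ correspond to connected vertex $k$-partitions of $L(G)$ with the same weights. The paper states this corollary without a detailed proof, so your write-up simply fills in the routine verification.
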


An extension of BCP is demanding for fixed (possibly heterogeneous) size targets for each of the $k$ parts. More precisely, given a graph $G$ and $w_1,\dots,w_k$ with $\sum_{i=1}^k w_i = n$, the task is to find a partition $T_1,\cdots, T_k$ where each $T_i$ has size $w_i$ and induces a connected subgraph. 
Such a connected $k$-partition with the fixed target weights exists for $G$ only if $G$ meets certain structural properties; a $K_{1,3}$ for example has no connected $2$-partition $T_1, T_2$ with $|T_1|=|T_2|=2$.
A characterization of when such a connected partition always exists was independently proved by Gy\H{o}ri~\cite{gyori1976division} and Lov\'{a}sz~\cite{lovasz1977homology}:
They showed that in any $k$-connected graph a connected $k$-partition satisfying the target weights always exists. This result is the famous {\gyori-\lovasz} Theorem (GL theorem, for short):
\begin{theorem}[{\gyori-\lovasz} Theorem \cite{gyori1976division,lovasz1977homology}]
	Given a $k$-connected graph $G=(V,E,w)$, $n_1,\dots,n_k\in \mathbb N$ such that $\sum_{i=1}^k n_i=|V|$, and $k$ terminal vertices $t_1,\cdots,t_k\in V$, there exists a connected $k$-partition $T_1,\cdots, T_k$ of $V$ such that for each $i\in [k]$, $|T_i|=w_i$ and $t_i\in T_i$.
\end{theorem}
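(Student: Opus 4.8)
The plan is to reprove the \gyori--\lovasz\ Theorem by a combinatorial extremal argument, essentially \gyori's approach~\cite{gyori1976division}. First note that it suffices to produce \emph{pairwise disjoint} connected vertex sets $T_1,\dots,T_k$ with $t_i\in T_i$ and $|T_i|=n_i$: since $\sum_i n_i=|V|$, such sets automatically cover $V$ and hence form the required partition. I would therefore consider the family of all $k$-tuples $(C_1,\dots,C_k)$ of pairwise disjoint connected sets with $t_i\in C_i$ and $|C_i|\le n_i$ for every $i$ (this family is nonempty, since $(\{t_1\},\dots,\{t_k\})$ qualifies when the terminals are distinct), and pick a member maximizing $\sum_i|C_i|$; subject to that, I would break ties by a secondary criterion, e.g.\ making the number of edges with both endpoints in $\bigcup_i C_i$ as large as possible, so that a ``stuck'' configuration is as rigid as possible. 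If $\sum_i|C_i|=|V|$ we are done (then $|C_i|=n_i$ for all $i$ and the sets partition $V$), so assume the leftover set $R:=V\setminus\bigcup_i C_i$ is nonempty.

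Next I would extract structure from maximality. Since $G$ is connected, some vertex of $R$ has a neighbour in some $C_j$; were that $C_j$ \emph{deficient} ($|C_j|<n_j$) we could move the vertex into it and increase $\sum_i|C_i|$, so every component meeting $N(R)$ is \emph{full} ($|C_j|=n_j$). As $\sum_i|C_i|<\sum_i n_i$, some component, say $C_1$, is deficient, and by the above it is not adjacent to $R$. The core of the proof is an \emph{augmenting cascade}: using the $k$-connectivity of $G$, I want to find full components $C_{i_1},\dots,C_{i_r}$ and vertices $v_0\in C_{i_1},\,v_1\in C_{i_2},\dots$ along which one can simultaneously move $v_0$ into $C_1$, a vertex of $C_{i_2}$ into $C_{i_1}$, $\dots$, and finally a vertex of $R$ into $C_{i_r}$, with every set involved remaining connected and still containing its terminal after the swaps. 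Such a cascade absorbs one vertex of $R$ and leaves all the full components at exactly their target size, so it strictly increases $\sum_i|C_i|$, contradicting the extremal choice.

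The main obstacle is exactly establishing this cascade, or equivalently showing that the failure to augment produces a vertex cut of $G$ of size at most $k-1$. Concretely, I would build an alternating structure rooted at $R$: from each full component reachable so far, pass to a vertex whose deletion keeps the component connected and terminal-containing (such a ``releasable'' vertex must be argued to exist, and to be attachable in the right direction — this is where the secondary extremal criterion should help), then to its neighbours in other components, and so on; if $C_1$ is never reached, the set of ``blocking'' vertices bounding this structure (terminals and the cut vertices that make certain vertices non-releasable, together with their attachment points) should be shown to separate $t_1$ from $R$ in $G$ while having fewer than $k$ elements, contradicting Menger's theorem. Getting the bookkeeping of connectivity right throughout the simultaneous swaps, and proving the blocking set is genuinely small, is the delicate part, and I expect essentially all the work — and the only real use of $k$-connectivity — to lie there. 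As an alternative I would keep in mind \lovasz's homological proof~\cite{lovasz1977homology}, which instead shows that a simplicial complex assembled from the connected partitions of a $k$-connected graph is topologically non-trivial and hence nonempty in the relevant dimension; but since the augmenting-cascade route is self-contained (and, consistent with the discussion above, algorithmic in spirit though not polynomial for general $k$) I would pursue it first.
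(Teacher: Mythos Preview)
The paper does not prove this theorem: it is stated, with citations to \cite{gyori1976division,lovasz1977homology}, purely as background for the approximate GL results that follow, and no proof (or even sketch) is given anywhere in the text. So there is nothing to compare your argument against on the paper's side.

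That said, your plan is exactly \gyori's original extremal argument, and the high-level structure you describe is correct: take a maximal tuple of disjoint connected sets with $t_i\in C_i$ and $|C_i|\le n_i$, observe that any leftover vertex is adjacent only to full sets, and build an augmenting cascade that ultimately pushes a vertex into a deficient set while keeping all intermediate sets connected and terminal-containing; failure of such a cascade yields a separator of size at most $k-1$, contradicting $k$-connectivity. You are also right that essentially all the work is in the bookkeeping of which vertices are ``releasable'' and why the blocking configuration is a small cut. One caution: the secondary tiebreaker you propose (maximizing the number of internal edges) is not the one that makes \gyori's proof go through; his argument uses a carefully layered reachability structure (what he calls a \emph{cascade}) and a lexicographic extremality on the sizes of successive ``reservoirs,'' not an edge-count maximization. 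If you try to push the argument with your tiebreaker you will likely get stuck precisely at the point where you need a releasable vertex adjacent in the right direction. If you want to carry this out in full, follow \gyori's original layering, or alternatively the cleaner presentation in Hoyer's thesis~\cite{hoyer2019independent}.
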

Recently, the theorem was generalized to vertex-weighted graphs as:
\begin{theorem}[Weighted {\gyori-\lovasz} Theorem \cite{chandran2018spanning,chen2007almost,hoyer2019independent}]
	Given a vertex-weighted $k$-connected graph $G=(V,E,w)$, $w_1,\dots,w_k\in \mathbb N$ such that $\sum_{i=1}^k w_i= w(V)$, and $k$ terminal vertices $t_1,\cdots,t_k$, there exists a connected $k$-partition $T_1,\cdots, T_k$ of $V$ such that $w_i-\wmax<w(T_i)<w_i+\wmax$, and $t_i\in T_i$ for each $i\in [k]$, where $\wmax$ is the largest vertex weight.
\end{theorem}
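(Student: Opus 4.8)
I would derive the weighted statement from the (unweighted) {\gyori-\lovasz} Theorem stated above by a clique blow-up. Replace each vertex $v\in V$ with a clique $\widetilde C_v$ on $w(v)$ fresh unit-weight vertices, and join $\widetilde C_v$ completely to $\widetilde C_u$ whenever $uv\in E$; call the resulting unweighted graph $G'$. Then $|V(G')| = w(V) = \sum_{i\in[k]} w_i$. The two things to establish are that $G'$ is $k$-connected and that the partition produced by the unweighted theorem on $G'$ can be ``rounded'' back to $G$ without moving weight too far. Granting both, I fix for each $i$ an arbitrary copy $t'_i\in\widetilde C_{t_i}$ (the $t_i$, hence the $t'_i$, are distinct), apply the unweighted {\gyori-\lovasz} Theorem to $G'$ with size targets $w_1,\dots,w_k$ and terminals $t'_1,\dots,t'_k$, and obtain a connected $k$-partition $T'_1,\dots,T'_k$ of $V(G')$ with $|T'_i| = w_i$ and $t'_i\in T'_i$ for all $i$.

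\textbf{$k$-connectivity of $G'$.} Suppose $S'\subseteq V(G')$ with $|S'|<k$ disconnects $G'$, and pick $x'\in\widetilde C_v$ and $y'\in\widetilde C_u$ lying in distinct components of $G'-S'$; since $\widetilde C_v\setminus S'$ is a clique, $v\neq u$. Let $S := \{\,p\in V : \widetilde C_p\cap S'\neq\emptyset\,\}$; the sets $\widetilde C_p\cap S'$ for $p\in S$ are disjoint and nonempty, so $|S|\le|S'|<k$. Because $G$ is $k$-connected, $G - (S\setminus\{v,u\})$ is connected, hence contains a $v$--$u$ path $v=p_0,p_1,\dots,p_m=u$ whose internal vertices avoid $S$ entirely. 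Then $\widetilde C_{p_\ell}\cap S'=\emptyset$ for $0<\ell<m$, while $\widetilde C_v\setminus S'$ and $\widetilde C_u\setminus S'$ are nonempty; since consecutive cliques along the path are completely joined, $(\widetilde C_v\setminus S')\cup\widetilde C_{p_1}\cup\dots\cup\widetilde C_{p_{m-1}}\cup(\widetilde C_u\setminus S')$ induces a connected subgraph of $G'-S'$ containing $x'$ and $y'$, a contradiction. (Also $|V(G')| = w(V)\ge|V|\ge k+1$ since $G$ is $k$-connected, so the order requirement holds.)

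\textbf{Rounding — the main obstacle.} For $v\in V$ put $I_v := \{\,i : T'_i\cap\widetilde C_v\neq\emptyset\,\}$ and call $\widetilde C_v$ \emph{mixed} if $|I_v|\ge 2$. I want to reassign every mixed clique wholly to a single part, producing a map $\phi\colon V\to[k]$ with $\phi(t_i)=i$ and $G[\phi^{-1}(i)]$ connected for all $i$; then $T_i := \phi^{-1}(i)$ is a connected $k$-partition of $V$ with $t_i\in T_i$. The difficulty is that pulling $\widetilde C_v\cap T'_i$ out of a part $T'_i$ may simultaneously disconnect $G'[T'_i]$ and change $|T'_i|$, and a purely local greedy reassignment controls neither; a naive induction on $k$ fails for the same reason, since the errors would accumulate to roughly $k\,\wmax$. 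The way I would carry it out is to perform the reassignments globally in the spirit of {\gyori}'s original rotation argument (equivalently, via the confluent-flow machinery of Chen et al., or Hoyer's independent-trees argument): maintain the invariant that every part stays connected and keeps its terminal, and whenever a part would lose or gain copies, reroute the deficit along a path of cliques so that each step perturbs the weight of any part by less than the size of a single clique, i.e.\ by less than $\wmax$. Carried out to completion this yields $\phi$ with $\bigl|\,w(\phi^{-1}(i)) - |T'_i|\,\bigr| < \wmax$, and since $|T'_i| = w_i$ we get $w_i - \wmax < w(T_i) < w_i + \wmax$ together with $t_i\in T_i$, as required. I expect essentially all the real work to live in this last step: the blow-up and the connectivity check are routine, whereas making the clique reassignment respect connectivity of all $k$ parts at once while keeping the per-part weight drift below one vertex weight is precisely the delicate point that the three cited proofs are devoted to.
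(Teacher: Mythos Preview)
The paper does not prove this theorem at all: it is quoted as a known result with citations to \cite{chandran2018spanning,chen2007almost,hoyer2019independent} and is used only as background. So there is no ``paper's own proof'' to compare against; the relevant comparison is with the cited sources, all of which argue \emph{directly} on the weighted graph by adapting {\gyori}'s (or {\lovasz}'s) original argument rather than going through an unweighted blow-up.

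Your blow-up and the $k$-connectivity check for $G'$ are fine. The genuine gap is exactly where you locate it yourself: the ``rounding'' step. Applying the unweighted theorem to $G'$ gives you connected sets $T'_i$ with $|T'_i|=w_i$; projecting back you obtain connected sets $\pi(T'_i)=\{v:\widetilde C_v\cap T'_i\neq\varnothing\}$ in $G$ that cover $V$, contain the correct terminals, and have $w(\pi(T'_i))\ge w_i$, but they overlap on the mixed vertices. Turning this overlapping connected cover into a connected \emph{partition} while keeping every part within $\wmax$ of its target and retaining its terminal is not a clean-up step; it is essentially the weighted {\gyori}--{\lovasz} problem restated. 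A greedy reassignment of mixed cliques can disconnect other parts (a vertex $v$ with $\widetilde C_v$ split between $T'_i$ and $T'_j$ may be a cut vertex of $\pi(T'_j)$), and a sequential ``grow $T_1$ inside $\pi(T'_1)$, then $T_2$ inside $\pi(T'_2)\setminus T_1$, \dots'' scheme fails for the same reason: after peeling off $T_1$, the remainder of $\pi(T'_2)$ need not be connected. When you write that you would ``perform the reassignments globally in the spirit of {\gyori}'s original rotation argument,'' you are invoking precisely the machinery that the cited proofs develop directly on the weighted graph---at which point the blow-up has bought nothing.

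In short: the reduction is correct but vacuous, because the residual problem after the reduction is equivalent in difficulty to the theorem you set out to prove. If you want an actual proof, follow Chandran et al.\ or Hoyer and run {\gyori}'s cascade argument with weights from the start; the only change is that a single vertex move now shifts a part's weight by at most $\wmax$ rather than by~$1$, which is exactly what produces the $\pm\wmax$ slack in the statement.
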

We will refer to the partition guaranteed by the (weighted) GL theorem as \emph{GL partition}. We will however not consider the terminal vertices in the GL partitions in this work.

The GL theorem has found some applications in the field of algorithms. Chen et.~al.~\cite{chen2007almost} use it for proving the existence of low-congestion \emph{confluent flows} in $k$-connected graphs. Further, L\"{o}wenstein et.~al.~\cite{lowenstein2009spanning} and Chandran et.~al.~\cite{chandran2018spanning} use it for finding spanning trees with low \emph{spanning tree congestion}. 
Perhaps, the reason why such a strong combinatorial statement has not found further applications is that we do not know how to efficiently compute GL partitions. About five decades after the discovery of the GL theorem, polynomial time algorithms for finding a GL partition (even in the unweighted case without terminals) are only known for $k\le 4$ 
~\cite{suzuki1990linear,wada1993efficient,hoyer2019independent}.
The fastest algorithm for general $k$ takes $\Omega(2^n)$ time~\cite{chandran2018spanning,IssacThesis}.
Neither there are any impossibility results to exclude efficient computability of such partitions. 
Even when $k$ is part of the input, a polynomial time algorithm is not ruled out.

The absence of efficient algorithms for finding exact GL partitions motivates finding GL-style partitions that approximately satisfy the weight targets. 
In this paper we present polynomial time algorithms that does this.
First we give an algorithm for a ``half-bounded'' GL partition, in the sense that we can guarantee an approximate upper or lower bound on the weight of the parts.
\begin{restatable}{theorem}{oneSideGL}
	\label{thm:one-side}	
		Let $G=(V,E,w)$ be a $k$-connected vertex-weighted graph and $w_1,\dots,w_k\in \mathbb N$ with $\sum_{i=1}^k w_i = w(G)$, and $\min_{i\in[k]} w_i \geq \max_{v\in V}w(v)$.
		A connected $k$-partition $T_1, \dots, T_k$ of $V$  such that either $w(T_i) \geq \frac{1}{3} w_i$ for every $i \in [k]$ (lower-bound version) or  $w(T_i)\leq 3w_i$ for every $i\in [k]$ (upper-bound version) can be computed  in time $\O(k|V|^2|E|)$.
\end{restatable}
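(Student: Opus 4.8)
The plan is to build the partition greedily, carving off one connected part at a time from a shrinking connected subgraph. Since the statement involves no terminal vertices, we may reorder the targets; I will assume a convenient ordering (increasing for the lower-bound version, and essentially the reverse for the upper-bound version, which I treat by an analogous carving with the order of the targets and the accounting for the leftover part adjusted accordingly). Maintain a connected subgraph $R$, starting with $R=G$, and in iteration $j=1,\dots,k-1$ carve a connected $T_j\subseteq R$ whose weight is comparable to $w_j$ and such that $R\setminus T_j$ stays connected; then replace $R$ by $R\setminus T_j$. After $k-1$ iterations put $T_k:=R$. Connectivity of every part is then immediate. The only global bookkeeping is an invariant relating $w(R)$ to $\sum_{i\ge j}w_i$: if in the lower-bound version we always carve with $w(T_j)\le w_j$, then $w(R)\ge\sum_{i\ge j}w_i$ is preserved, so $w(T_k)\ge w_k\ge\tfrac13 w_k$, while the carved parts meet their lower bound by construction; an analogous computation handles the upper-bound version.

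The heart of the argument is the carving step: given a connected $R$ with $w(R)\ge w_j$ in which every vertex has weight at most $\wmax\le w_j$, find a connected, removable $T_j\subseteq R$ of the right weight. Here I would use a rooted spanning tree $S$ of $R$ together with the elementary observation that deleting the vertex set of a rooted subtree of $S$ leaves a connected graph (the complement of a rooted subtree is again spanned by a connected subtree of $S$), so that $\mathrm{subtree}_S(v)$ is a connected, removable candidate for every vertex $v$. Writing $s(v)$ for the total weight of $\mathrm{subtree}_S(v)$ and descending from the root along a heaviest child, the value $s$ strictly decreases from $w(R)\ge w_j$ at the root to at most $\wmax\le w_j$ at a leaf, so there is a vertex $v$ with $s(v)\ge w_j$ all of whose children $c$ satisfy $s(c)<w_j$. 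If $v$ is a leaf, or if $s(v)\le w_j$, then $\mathrm{subtree}_S(v)$ has weight exactly $w_j$ and we are done; if some child $c$ of $v$ has $s(c)$ not too small, then $\mathrm{subtree}_S(c)$ is a valid choice.

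The remaining, and crucial, case is a \emph{bushy} vertex $v$: $s(v)>w_j$ but every child subtree is light. Now $\mathrm{subtree}_S(v)$ overshoots, no single child subtree suffices, and one is forced to carve $v$ together with a carefully chosen bundle of its children's subtrees (possibly only their upper portions). Inside $S$ such a bundle cannot be removed without disconnecting the remaining children of $v$ from the rest of $R$, so this is exactly where the $k$-connectivity of $G$ must enter: one has to argue that $R$ is still highly enough connected -- I would maintain the invariant that $G_j:=R$ is $(k-j+1)$-connected -- so that the non-tree edges of $R$ reconnect the discarded subtrees to the rest, while simultaneously choosing the bundle so that $w(T_j)$ lands in the target window. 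Moreover, $T_j$ must be carved so that $R\setminus T_j$ still retains the connectivity needed for the later iterations, which is an additional constraint on the choice. This three-way interplay -- hitting the weight window, keeping the current remainder connected, and preserving enough connectivity for the future -- is the main obstacle I anticipate, and I expect it to be responsible both for the loss of a factor $3$ (rather than something tighter) and for the running time: each part is carved by examining $\O(|V|)$ candidate bundles and, for each, running an $\O(|V||E|)$-time connectivity/flow certification, giving $\O(k|V|^2|E|)$ overall.
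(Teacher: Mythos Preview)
Your greedy carving scheme has a genuine gap at precisely the point you yourself flag as ``the main obstacle'': maintaining the invariant that the remainder $R$ is $(k-j+1)$-connected after carving $T_j$. Removing a single vertex from a $c$-connected graph leaves a $(c{-}1)$-connected graph, but removing a connected set of many vertices can destroy connectivity far beyond that; you give no mechanism for choosing $T_j$ so that $R\setminus T_j$ drops only one unit of connectivity, and none is apparent. Without this invariant your treatment of the bushy case collapses in later rounds: once $R$ is merely $1$-connected, a bushy cut-vertex $v$ with many light branches forces you to take $v$ together with some branches, orphaning the others, and there are no non-tree edges left to reattach them. Even the benign rounds hurt you here, since carving a rooted subtree from a $2$-connected $R$ (say a cycle) can already leave a path. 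A resolution along these lines would amount to something close to an algorithmic \gyori--\lovasz\ theorem, which is exactly what is open for $k\ge 5$.

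The paper's route is structurally different and never tries to keep the remainder connected, let alone highly connected. After $T_1,\dots,T_{i-1}$ are built, the leftover $\og=G-V(\T)$ is allowed to fragment. The driving combinatorial fact is the dichotomy of \cref{lemma::divideInto2Comp}: any connected set of weight exceeding $3\lambda$ either splits into two connected pieces of weight $\ge\lambda$ each, or has a single $\lambda$-separator vertex; the factor~$3$ comes from this, not from tree carving. The $k$-connectivity is then used \emph{globally} rather than inductively: since $|\T|<k$ and each $T_j$ in the ``has-a-separator'' class $\T_b$ contributes one separator vertex, deleting these $<k$ vertices from the original $G$ leaves it connected, so every small component of $\og$ has an edge into some $T_j$ that avoids $T_j$'s separator (\cref{lemma::connectionQ}). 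An absorb-or-release inner loop (Step~\ref{algBoundedGL::RemoveQ}) then either merges the small component into $T_j$ or ejects a piece of $T_j$ back into $\og$, eventually producing an $i$-big component from which $T_i$ is carved. Both the factor~$3$ and the $\O(k|V|^2|E|)$ running time stem from this mechanism, with the lower- and upper-bound versions of the theorem obtained afterwards by taking $\alpha=\tfrac13$ and $\alpha=1$ in \cref{thm::GLpartition}.
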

\noindent
We then extend this result to a lower and upper bounded partition.
\begin{restatable}{theorem}{bothSideGL}
	\label{thm:both-side}
	Let $G=(V,E,w)$ be a $k$-connected vertex-weighted graph and $w_1,\dots,w_k\in \mathbb N$ with $\sum_{i=1}^k w_i = w(G)$, and $\min_{i\in[k]} w_i \geq \max_{v\in V}w(v)$, and $r := \frac{\max_{i \in [k]} w_i}{\min_{j \in [k]} w_j}$. 
	Then, a connected $k$-partition $T_1, \dots, T_k$ of $V$ such that $\frac{1}{3}w_i \leq w(T_i) \leq \UpBoundGL w_i$ for every $i \in [k]$ can be found in time $\bigO(k|V|^2|E|)$.
\end{restatable}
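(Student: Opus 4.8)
The plan is to derive Theorem~\ref{thm:both-side} from Theorem~\ref{thm:one-side} via a post-processing \emph{transfer phase}, realised by the routine $\algTransfer$. First I would call $\algBoundedGL$ (Theorem~\ref{thm:one-side}) in its lower-bound mode to obtain a connected $k$-partition $T_1,\dots,T_k$ with $w(T_i)\ge\frac13 w_i$ for all $i$; this already runs in $\O(k|V|^2|E|)$, so it remains to make the transfer phase no more expensive. Call a part $T_j$ \emph{heavy} if $w(T_j)>\UpBoundGL\,w_j$. If no part is heavy we are done, since then $\frac13 w_i\le w(T_i)\le\UpBoundGL\,w_i$ for every $i$; otherwise we repeatedly shrink heavy parts, one vertex at a time.

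The elementary move relocates a single vertex $v$ out of a heavy part $T_j$ into a part $T_p$ that $v$ is adjacent to. For this I need that a heavy part always has a \emph{movable} vertex, i.e.\ a vertex that is a non-cut vertex of $G[T_j]$ and has a neighbour outside $T_j$. Heaviness gives $w(T_j)>3w_j$; since $|T_j|=1$ would force $w(T_j)\le\wmax\le w_j$, we have $|T_j|\ge2$; and if $k\ge2$ then $G$ is $2$-connected and $T_j\subsetneq V$, so inspecting a leaf block of the block--cut tree of $G[T_j]$ shows that its non-cut vertices cannot all be internal to $T_j$ (else the unique cut vertex of that block would be a cut vertex of $G$) --- the case $k=1$ is trivial. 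Relocating such a $v$ keeps both $G[T_j\setminus\{v\}]$ and $G[T_p\cup\{v\}]$ connected. The lower bound is maintained throughout as an invariant: vertices are removed only from heavy parts, and since $w(v)\le\wmax\le w_j$ such a part stays at weight $>3w_j-w_j=2w_j>\frac13 w_j$, while parts receiving a vertex only grow. Hence $w(T_i)\ge\frac13 w_i$ holds at all times, and when the phase terminates --- no heavy part --- both bounds hold simultaneously, as desired.

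It remains to argue that the transfer phase makes progress and terminates quickly, and this is the main obstacle. A short counting argument shows the heavy parts are never all of them: from $\sum_i w(T_i)=\sum_i w_i$ and $w(T_i)\ge\frac13 w_i$ one computes that the total spare capacity $\sum_{i:\,T_i\text{ not heavy}}(\UpBoundGL\,w_i-w(T_i))$ exceeds the total excess $\sum_{j:\,T_j\text{ heavy}}(w(T_j)-\UpBoundGL\,w_j)$ by exactly $(\UpBoundGL-1)\sum_i w_i\ge 2\sum_i w_i$; in particular heavy parts and slack non-heavy parts coexist, so in the connected part-adjacency graph the heavy region has a nonempty boundary. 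The delicate point is that the movable vertex forced on us inside a heavy part need not be adjacent to a non-heavy part, so a single move may merely push excess from one heavy part into another. I would resolve this by moving excess along a path of the part-adjacency graph from a heavy part to a non-heavy one --- a chain of elementary moves --- and would need to show (a) that such a chain is realisable, i.e.\ every part on the path stays connected throughout and a movable vertex towards the next part of the path is available at each step (or the path can be locally rerouted when it is not), while the $\frac13 w_i$ floor is respected also at the intermediate parts; and (b) that it strictly decreases the integer potential $\Phi:=\sum_i\max\{0,\,w(T_i)-\lfloor\UpBoundGL\,w_i\rfloor\}$. Establishing (a) and (b), and organising the chains so that only polynomially many are needed --- keeping the total cost within $\O(k|V|^2|E|)$ --- is the technical heart of the proof.

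Finally, in the balanced case $w_1=\dots=w_k$ we have $r=1$, hence $\UpBoundGL=3$, and the partition produced satisfies $\frac13 w_i\le w(T_i)\le 3w_i$. One could also proceed symmetrically --- start from the upper-bound version of Theorem~\ref{thm:one-side} and repair the lower bounds by pulling weight into light parts --- but this is less convenient, since maintaining the upper bound under such pulls requires an additional richness hypothesis on the donor part, whereas in the route above the lower-bound invariant comes for free.
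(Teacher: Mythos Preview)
Your starting point already diverges from the paper's. The paper does \emph{not} invoke the lower-bound version of Theorem~\ref{thm:one-side} (which yields a full partition with only the lower bound). Instead it calls Theorem~\ref{thm::GLpartition} with $\alpha=\tfrac13$ to obtain a connected \emph{packing} $\T=\{T_1,\dots,T_k\}$ with $\tfrac13 w_i\le w(T_i)\le w_i$ for all $i$; so every part already satisfies \emph{both} bounds, and the remaining work is to absorb the unassigned components $\calQ$ without breaking them. This is done via an auxiliary \emph{transfer-graph} whose nodes are not the parts themselves but a finer collection: the parts in $\T^-$ and $\T^+_a$, the $w_j$-separator components $C(T_j)$ for $T_j\in\T^+_b$, and the $Q$'s. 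A shortest path from $\calQ$ to $\T^-$ in this graph is then augmented by the \algTransfer{} routine, which moves entire connected chunks (not single vertices) and uses the dividable/separator dichotomy of Lemma~\ref{lemma::divideInto2Comp} at each step. Progress is measured by $|\T^*|$ and $|V(\T)|$.

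Your plan has a genuine gap exactly where you flag it, and I do not see how to close it along the lines you sketch. The leaf-block argument gives you \emph{one} non-cut vertex of $G[T_j]$ with an outside neighbour, but you have no control over \emph{which} part that neighbour lies in. Hence a path in the part-adjacency graph from a heavy part to a non-heavy one need not be realisable as a chain of elementary moves: the movable vertex in $T_j$ may point only into another heavy part not on your path, and repeating the leaf-block argument there may point you straight back. Nothing in $k$-connectivity prevents this, because $k$-connectivity constrains $G$, not the much sparser ``movable-vertex'' digraph. A second issue is your invariant ``vertices are removed only from heavy parts'': intermediate parts on the chain need not be heavy, yet you must remove a vertex from them to pass weight along, and then the $\tfrac13 w_i$ floor is no longer automatic. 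Finally, even granting realisability, your potential $\Phi$ is only non-increasing along heavy-to-heavy moves, so you would still need a separate termination argument to bound the number of chains; nothing in the proposal supplies one within $\O(k|V|^2|E|)$.

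The paper sidesteps all of this precisely by (i) never allowing any $T_i$ to exceed $\UpBoundGL\,w_i$ in the first place, and (ii) transferring connected sets rather than single vertices, with Lemma~\ref{lemma::divideInto2Comp} guaranteeing that after each transfer the donor part can be split or trimmed back into range while staying connected. That structural lemma is the engine you are missing.
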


In particular, \cref{thm:both-side} implies the following approximately balanced partition of $k$-connected graphs.
\begin{restatable}{corollary}{BalancedBothSideGL}
	\label{cor:balanced}
	Let $G=(V,E,w)$ be a $k$-connected vertex-weighted graph such that ${w(G)} \geq {k}\max_{v\in V}w(v)$.	Then, a connected $k$-partition $T_1, \dots, T_k$ of $V$ such that $\frac{1}{3}\left\lfloor\frac{w(G)}{k}\right\rfloor\leq w(T_i) \leq 
	3\left\lceil\frac{w(G)}{k}\right\rceil$ for every $i \in [k]$ can be  found in time $\bigO(k|V|^2|E|)$.	
\end{restatable}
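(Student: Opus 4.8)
The plan is to deduce the corollary from \cref{thm:both-side} by instantiating it with the most balanced integral target weights. Concretely, set $q := \lfloor w(G)/k \rfloor$ and $s := w(G) - kq \in \{0,1,\dots,k-1\}$, let $s$ of the targets equal $q+1$, and let the remaining $k-s$ targets equal $q$. Then $\sum_{i=1}^k w_i = w(G)$ by construction, and (as we check below) each $w_i$ is a positive integer, so the summation hypothesis of \cref{thm:both-side} is satisfied. Computing $w(G)$ and these targets takes only $\bigO(|V|+k)$ time, which is dominated by the running time asserted in \cref{thm:both-side}.

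Next I would verify the two remaining hypotheses. Since $k-s\ge 1$, at least one target equals $q$, so $\min_{i\in[k]} w_i = q = \lfloor w(G)/k\rfloor$; and since $\max_{v\in V}w(v)$ is a positive integer with $w(G)/k \geq \max_{v\in V}w(v)$ by assumption, monotonicity of the floor gives $\min_{i\in[k]} w_i = \lfloor w(G)/k\rfloor \geq \max_{v\in V}w(v) \geq 1$, so the $w_i$ are indeed positive integers and the weight-domination hypothesis holds. Moreover $\max_{i\in[k]} w_i \le q+1$, hence $r = \max_i w_i / \min_i w_i \leq (q+1)/q = 1 + 1/q \leq 2$, so $\UpBoundGL = 3$. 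Thus \cref{thm:both-side} applies and yields, in time $\bigO(k|V|^2|E|)$, a connected $k$-partition $T_1,\dots,T_k$ with $\tfrac13 w_i \leq w(T_i) \leq 3 w_i$ for every $i\in[k]$.

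It then remains to translate these guarantees. For the lower bound, $w(T_i) \geq \tfrac13 w_i \geq \tfrac13 q = \tfrac13 \lfloor w(G)/k\rfloor$. For the upper bound, I claim $w_i \leq \lceil w(G)/k\rceil$ for every $i$: if $k \mid w(G)$ then $s=0$ and all $w_i = q = \lceil w(G)/k\rceil$, while otherwise $\lceil w(G)/k\rceil = q+1 \geq w_i$. Hence $w(T_i) \leq 3w_i \leq 3\lceil w(G)/k\rceil$, which is exactly the claimed bound, and the running time is inherited from \cref{thm:both-side}.

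Since this is a direct specialization of \cref{thm:both-side}, I do not expect a genuine obstacle; the only points needing care are the floor/ceiling bookkeeping — in particular using integrality of the vertex weights to pass from $w(G)/k \geq \max_{v\in V}w(v)$ to $\lfloor w(G)/k\rfloor \geq \max_{v\in V}w(v)$, and splitting into the cases $k\mid w(G)$ and $k\nmid w(G)$ to bound each $w_i$ above by $\lceil w(G)/k\rceil$ and below by $\lfloor w(G)/k\rfloor$ simultaneously while keeping $r \leq 2 < 3$.
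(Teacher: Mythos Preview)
Your proposal is correct and follows exactly the approach the paper indicates: the corollary is stated as an immediate consequence of \cref{thm:both-side}, obtained by choosing the most balanced integral targets $w_i\in\{\lfloor w(G)/k\rfloor,\lceil w(G)/k\rceil\}$ summing to $w(G)$, so that $r\le 2$ and hence $\UpBoundGL=3$. The floor/ceiling bookkeeping you spell out (in particular using integrality of vertex weights to get $\lfloor w(G)/k\rfloor\ge\wmax$) is precisely what is needed and is not made explicit in the paper.
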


To the best of our knowledge, these are the first polynomial time algorithms that \emph{approximate} the GL theorem.
We believe that such an efficient approximation will result in the theorem being used for developing algorithms in the future.
Especially, we are hopeful that the both-side approximation for balanced connected partition of $k$-connected graphs will find applications.
We remark, however that for the above mentioned applications of confluent flows and spanning tree congestion, the terminal vertices are essential and hence our algorithms cannot be used.
An interesting future direction would be to extend our results to the setting with terminals.

Observe that Corollary~\ref{cor:balanced} in some sense yields a 3-approximation simultaneously for \textsc{Min-Max} and \textsc{Max-Min BCP} in $k$-connected graphs. In this regard, it is interesting to note that the $+/-\wmax$ slack given in the weighted GL theorem is enough to retain hardness in the following sense: even for $k=2$, \textsc{Min-Max BCP}
and \textsc{Max-Min BCP}
remain strongly $\textsf{NP}$-hard  when restricted to $2$-connected graphs; and the corresponding hardness-proof given in~\cite{chataigner2007approximation} also constructs an instance with ${w(G)} \geq {k}\max_{v\in V}w(v)$. 
This hardness can be extended to $k$-connected graphs for any fixed $k\geq 2$ (see~\cite{chataigner2007approximation}[Theorem 3] for more details).

\subsection{Related work}
Both variants of \textsc{BCP} were first introduced for trees~\cite{perl1981max,kundu1977linear}.
Under this restriction, a linear time algorithm was provided for both variants in~\cite{frederickson1991optimal}.
This is particularly important since different heuristics transform the original instance to a tree to efficiently 
solve the problem, see~\cite{chu2013linear,zhou2019balanced}.
For both variants of \textsc{BCP}, a $3$-approximation is given in~\cite{casel2020balanced}, which is the best known approximation in polynomial time.
With respect to lower bounds, it is known that there exists no approximation  for $\textsc{Max-Min BCP}$ with a ratio below~$6/5$, unless $\textsf{P}\neq \textsf{NP}$~\cite{chataigner2007approximation}.
For the unweighted case, a $\frac{k}{2}$-approximation for \textsc{Min-Max BCP} with $k \geq 3$, is  given in~\cite{chen2019approximation}.%

Balanced connected partitions for fixed small values of $k$, denoted $\textsc{BCP}_k$, have also been studied extensively. The restriction $\textsc{BCP}_2$, i.e.~balanced connected bipartition, is already \textsf{NP}-hard~\cite{camerini1983complexity}. On the positive side, a 
$\frac{4}{3}$-approximation for $\textsc{Max-Min BCP}_2$ is given in~\cite{chlebikova1996approximating}, and in~\cite{chen2019approximation} this result is used to derive a $\frac{5}{4}$-approximation for $\textsc{Min-Max BCP}_2$. Considering
 tripartitions, $\textsc{Max-Min BCP}_3$ and $\textsc{Min-Max BCP}_3$ can be approximated  with ratios $\frac{5}{3}$ and $\frac{3}{2}$, respectively~\cite{chen2020approximation}.

Regarding special graph classes, BCP has been investigated in grid graphs and series-parallel graphs.
While it was shown that BCP is $\textsf{NP}$-hard for arbitrary grid 
graphs~\cite{becker1998max},
the \textsc{Max-Min BCP} can be solved in polynomial
time for ladders, i.e., grid graphs with two rows~\cite{becker2001polynomial}.
For the class of series-parallel graphs, Ito~et.~al.~\cite{ito2006partitioning} 
observed that BCP remains weakly $\textsf{NP}$-hard (by a simple reduction from the Partition problem) and gave a pseudo-polynomial-time algorithm 
for both variants of BCP. 
They
also showed that their algorithm can be extended to graphs
with bounded tree-width.

The GL Theorem was independently proved by  Gy\H{o}ri~\cite{gyori1976division} and Lov{\'{a}}sz~\cite{lovasz1977homology}. 
{\gyori} used an elementary graph theoretic approach while {\lovasz} used ideas from topology.
{\lovasz}'s proof also works for directed graphs.
The Gy\H{o}ri-Lov{\'{a}}sz Theorem is extended to weighted directed graphs by Chen et.~al.~\cite{chen2007almost} and Gy\H{o}ri's original proof was generalized to weighted undirected graphs by Chandran et.~al.~\cite{chandran2018spanning}.
Both papers only gave upper bounds of $w_i+\wmax$ on the weight of partition $T_i$ and did not provide any lower bounds.
Later Hoyer~\cite{hoyer2019independent} showed that the method of Chandran et.~al.~\cite{chandran2018spanning} can be also extended to give the lower bound $w_i-\wmax$, even for directed graphs.
Polynomial algorithms to also compute GL partitions are only known for the particular cases $k=2,3,4$~\cite{suzuki1990linear,wada1993efficient,hoyer2019independent} and all $k\ge 5$ are still open.

	\section{Preliminaries}
%

By $\mathbb N$ we denote the natural numbers without zero. We use $\left[k\right]$ to denote the set $\left\{1,\ldots,k\right\}$.

All the graphs that we refer to in this paper are simple, finite and connected.
Consider a graph $G=(V,E)$.
We denote by $V(G)$ and $E(G)$ the set of vertices and edges of $G$ respectively, and if the graph we refer to is clear, we may simply write $V$ and $E$.
For a set of vertex sets $\calS \subseteq 2^V$ we use $V(\calS)$ to denote $\bigcup_{S \in \calS} S$. 
We denote an edge $e=\left\{u,v\right\}\in E(G)$ by $uv$ and the neighborhood of a vertex $v\in V$ in $G$ by $N_G\left(v\right)=\left\{u\in V\mid uv\in E(G)\right\}$. 
Similarly we denote the neighborhood of a vertex set $V'\subseteq V$ in $G$ by $N_G\left(V'\right)$, that is $\bigcup_{v \in V'} N_G\left(v\right) \setminus V'$. We may omit the subscript $G$ when the graph is clear from the context.
We use $\Delta(G)$ to denote the maximum degree of $G$.

We denote a \emph{vertex-weighted graph} by $G=\left(V,E,w\right)$ where $w$ is a function assigning integer weights to vertices $w\colon V\rightarrow \mathbb N$, and $V$ and $E$ are vertex and edge sets. We denote by $w_{\text{min}}$ and by $w_{\text{max}}$, $\text{min}_{v\in V}w\left(v\right)$ and $\text{max}_{v\in V} w\left(v\right)$, respectively. For any $V'\subseteq V$, we use $w(V')$ to denote the sum of weights of the vertices in $V'$.
For a subgraph $H$ of $G$ we use $w\left(H\right)$ to denote $w(V(H))$, and refer to it as the \emph{weight of the subgraph $H$}.
For a rooted tree $T$ and a vertex $x$ in $T$, we use $T_x$ to denote the rooted subtree of $T$ rooted at $x$.

For $V'\subseteq V$ we denote by $G[V']$ the graph induced by $V'$, i.e.~$G[V']=(V',E')$ with $E'=E\cap (V'\times V')$. For vertex-weighted graphs, induced subgraphs inherit the vertex-weights given by $w$. For $V'\subseteq V$ we also use $G-V'$ to denote the subgraph  $G[V\setminus V']$. Similarly, if $V'$ is a singleton $\left\{v\right\}$ we also write $G-v$.
For graphs $G_1$ and $G_2$, we use $G_1\cup G_2$ to denote the graph on vertices $V(G_1)\cup V(G_2)$ with edge set  $E(G_1)\cup E(G_2)$.

Let $\calU=\left\{ U_1, \dots, U_r \right\}$ be such that each $U_i\subseteq V$.  We call $\calU$  a \emph{connected packing} of $V$ if each $G[U_i]$ is connected, and the sets  in $\calU$ are pairwise disjoint. 
A connected packing $\calU$ is called \emph{connected vertex partition} (\cvp) of $V$, if also $\cup_{i=1}^rU_i=V(G)$.
We denote a {\cvp} that has $k$ vertex sets as {\cvp}$_k$.
For any $\calU'\subseteq \calU$, we define $V\left(\mathcal{U}'\right) := \bigcup_{U' \in \mathcal{U}'} U'$, and the weight $w\left(\calU'\right):=w\left(V\left(\calU'\right)\right)$.
Let $\mathcal{I}$ be an interval.
If $\calU$ is a {\cvp} and $w\left(U_i\right) \in \mathcal{I}$ for all $i \in [r]$, then we say that $\mathcal{U}$ is an \emph{$\mathcal{I}$-connected vertex $(r$-$)$partition} ($\mathcal{I}$-$\cvp_k$ or just $\mathcal{I}$-\cvp) of $V$.
If $\calU$ is a {connected-packing} and $w\left(U_i\right) \in \mathcal{I}$ for all $i \in [r]$, then we say that $\mathcal{U}$ is a \emph{$\mathcal{I}$-connected packing} of $V$.
	
	\section{Approximation for BCP on $c$-claw-free graphs}\label{section::BCP}

	In this section we give an idea of how to prove Theorems~\ref{thm::MinMax} and~\ref{thm::MaxMin} by giving a $(c-1)$-approximation for {\maxminbcp} and {\minmaxbcp} on $K_{1,c}$-free graphs, full proofs of these results can be found in Appendix~\ref{app:BCP}. 
	We assume $c\ge 3$ as $c\le 2$ gives trivial graph classes.
	We first show that a connected partition for $K_{1,c}$-free graphs with parts of size in $[\lambda,(c-1)\lambda)$ for some fixed $\lambda$ can be found in linear time.
	For {\maxminbcp}, this algorithm has to be called many times while doing a binary search for the optimum value.
	We point out that it is not difficult to adapt these algorithms to unconnected graphs achieving the same approximation results.

	Exploiting that each vertex in any DFS-tree of a $K_{1,c}$-free graph has at most $c-1$ children, we can carefully extract connected components of a fixed size while also maintaining a DFS-tree for the remaining graph. Also, this can be done very efficiently, as stated in the following result.
\begin{lemma}
	\label{lemma::boundedSubgraph}
	Given a $K_{1,c}$-free graph $G$ and a DFS-tree of $G$. For any $w(G) \geq \lambda\ge \wmax$, there is an algorithm that finds a connected vertex set $S$ such that $\lambda \leq w(S) < (c-1) \lambda$ and $G-S$ is connected, in $\mathcal O(|V|)$ time.
	Furthermore, the algorithm finds a DFS-tree of $G - S$. 
\end{lemma}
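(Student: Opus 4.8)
The plan is to work entirely with the given DFS-tree $T$, rooted at its root $\rho$, and to take $S$ as (essentially) a union of pendant subtrees of $T$, so that a DFS-tree of $G-S$ can be read off from $T$ with at most one local edit. Two structural facts about DFS-trees of $K_{1,c}$-free graphs drive the argument. First, since a DFS-tree has no cross edges, the children of any vertex are pairwise non-adjacent, so a vertex together with $c$ of its children would induce $K_{1,c}$; hence every vertex of $T$ has at most $c-1$ children. Second, and this is the key point, if a non-root vertex $x$ has exactly $c-1$ children $u_1,\dots,u_{c-1}$ and parent $p$, then $\{x,p,u_1,\dots,u_{c-1}\}$ would induce $K_{1,c}$ unless $p$ is adjacent to some $u_i$; so some child subtree of $x$ reaches $p$ by a back edge, and this edge can be used to reattach that subtree once $x$ is deleted.

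First I would dispose of the degenerate case $w(G)<(c-1)\lambda$ by returning $S:=V(G)$ (then $w(S)=w(G)\in[\lambda,(c-1)\lambda)$ and $G-S$ is empty). So assume $w(G)\ge(c-1)\lambda$. Compute all subtree weights $w(T_v)$ by one bottom-up pass over $T$, then descend from $\rho$, repeatedly moving to a child whose subtree weight is at least $\lambda$, and stop at the vertex $x$ with $w(T_x)\ge\lambda$ and $w(T_u)<\lambda$ for every child $u$ of $x$. Writing $d\le c-1$ for the number of children of $x$ and using $w(x)\le\wmax\le\lambda$ together with $w(T_u)<\lambda$, we get $w(T_x)<(d+1)\lambda$; hence $w(T_x)\ge(c-1)\lambda$ forces $d=c-1$.

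The output is then determined by three cases (throughout, $c\ge3$ is used for $(c-2)\lambda\ge\lambda$). \textbf{(i)} If $w(T_x)<(c-1)\lambda$, which forces $x\ne\rho$ because $w(T_\rho)=w(G)\ge(c-1)\lambda$, return $S:=V(T_x)$: then $w(S)\in[\lambda,(c-1)\lambda)$, $G[S]$ is connected, $G-S$ is connected and contains $\rho$, and $T-V(T_x)$ is a DFS-tree of $G-S$. \textbf{(ii)} If $w(T_x)\ge(c-1)\lambda$ and $x=\rho$, pick any child $u$ of $\rho$ and return $S:=V(G)\setminus V(T_u)$: from $w(T_u)<\lambda$ and $w(G)\ge(c-1)\lambda$ one checks $w(S)\in[\lambda,(c-1)\lambda)$, $G[S]$ is connected, $G-S=G[V(T_u)]$ is connected, and $T_u$ is a DFS-tree of it. \textbf{(iii)} If $w(T_x)\ge(c-1)\lambda$ and $x\ne\rho$, let $p$ be the parent of $x$, pick (using the second structural fact) a child $u^*$ of $x$ with $u^*p\in E$, and return $S:=V(T_x)\setminus V(T_{u^*})$: again $w(S)\in[\lambda,(c-1)\lambda)$ and $G[S]$ is connected; $G-S$ is the union of the connected graph $G-V(T_x)$ (which contains $\rho$) with $G[V(T_{u^*})]$, joined by the edge $u^*p$, hence connected; and adding $u^*p$ to the forest $\bigl(T-V(T_x)\bigr)\cup T_{u^*}$, i.e.~re-parenting $u^*$ to $p$, gives a DFS-tree of $G-S$.

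For the running time, the bottom-up pass and the descent cost $O(|V|)$, listing $S$ and emitting the (slightly modified) tree cost $O(|V|)$, and the only step that inspects edges is locating $u^*$, done by scanning $N(p)$ for a vertex whose $T$-parent is $x$ — that is $O(|N(p)|)=O(|V|)$ and is performed once. The main obstacle, and where the real work lies, is case (iii): trimming the overweight subtree $T_x$ down to admissible weight unavoidably detaches one child subtree, and one must certify that $G-S$ stays connected; claw-freeness supplies exactly the back edge $u^*p$ that reattaches it, after which there is a short but slightly delicate check that re-parenting $u^*$ along this edge preserves the DFS-tree property — namely that every surviving non-tree edge still joins an ancestor with one of its descendants.
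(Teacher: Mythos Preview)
Your proof is correct and follows essentially the same approach as the paper: locate the deepest vertex $x$ with $w(T_x)\ge\lambda$ but all child subtrees below $\lambda$, return $V(T_x)$ if its weight is already below $(c-1)\lambda$, and otherwise use $K_{1,c}$-freeness of the star $\{p,x,u_1,\dots,u_{c-1}\}$ to find a child $u^*$ adjacent to $p$ and return $V(T_x)\setminus V(T_{u^*})$, reattaching $T_{u^*}$ to $p$ in the DFS-tree. Your write-up is in fact slightly more careful than the paper's in two places: you explicitly verify that re-parenting $u^*$ to $p$ preserves the back-edge property of all surviving non-tree edges, and you spell out how to locate the edge $u^*p$ in $O(|N(p)|)$ time.
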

	We use  {\algBalancedPartition} to denote the algorithm that exhaustively applies Lemma~\ref{lemma::boundedSubgraph}. Observe that {\algBalancedPartition} produces a connected partition $S_1,\dots,S_m$ where $w(S_i) \in [\lambda,(c-1)\lambda)$ for every $i \in [m-1]$ and $w(S_m) < (c-1)\lambda$ in linear time, where the achieved runtime follows by saving already processed subtrees.

	\noindent
\cref{thm::MinMax} now follows from running {\algBalancedPartition} with $\lambda = \max\{\wmax,\frac{w(G)}{k}\}$. Note that this choice of $\lambda$ is a trivial lower bound for the optimum value.

\medskip
As already mentioned, to prove \cref{thm::MaxMin},  we first need to find an input parameter $\lambda$ for Algorithm {\algBalancedPartition} that provides the desired $(c-1)$-approximation.

Let $(G,k)$ be an instance of {\maxminbcp}, where $G$ is a $K_{1,c}$-free graph.
Let $X^*$ be the optimal value for the instance $(G,k)$.
For any given  $X \leq w(G)/k$, we design an algorithm that either gives a $[\lfloor X/(c-1) \rfloor, \infty)$-{$\cvp_{k}$}, or reports that $X>X^*$.
Note that $X^*\le w(G)/k$.
Once we have this procedure in hand, a binary search for the largest $X$ in the interval $\left(0, \left\lceil w(G)/k \right\rceil \right]$ for which we find a $[\lfloor X/(c-1) \rfloor, \infty)$-{$\cvp_{k}$} can be used to obtain an approximate solution for {\maxminbcp}.\\

\noindent
\textbf{Algorithm \algmaxmin:}
First remove all vertices of weight more than $\lambda=\lfloor X/(c-1)\rfloor$ and save them in $H$.
Then save the connected components of weight less than $\lambda$ in $\calQ$.
Let $\calV = \{V_1,\dots,V_{\ell}\}$ be the connected components of $G - (H \cup V(\calQ))$.
Apply algorithm {\algBalancedPartition} on each $G[V_i]$ with $\lambda$ as input parameter to obtain $\calS^i = \{S^i_1, \dots, S^i_{m_i}\}$ for every $i \in [\ell]$.
If for some $i \in [\ell]$ the weight $w(S_{m_i})$ is less than $\lambda$, then merge this vertex set with $S_{m_i - 1}$ and accordingly update $\calS^i$.
Further, compute a $(\lambda, \infty)$-{$\cvp_{|H|}$} $\calS^H$ of $G[H \cup V(\calQ)]$ as follows: for each $h\in H$, we will have a set $S^h\in \calS^H$ with $h\in S^h$; we add each $Q \in \calQ$ to some $S^h$ such that $h \in N(Q)$. 
Let $\calS = \calS^H \cup \bigcup_{i=1}^{\ell} \calS^i$.
If $|\calS| \geq k$, then merge connected sets arbitrarily in $\calS$ until $|\calS| = k$ and return $\calS$.
If $|\calS|<k$, report that $X>X^*$.
\newline

We point out that a $[\lambda, \infty)$-{$\cvp_{j}$} with $j > k$, can easily be transformed to a $[\lambda, \infty)$-{$\cvp_{k}$}, since the input graph is connected. It is not hard to see that if algorithm {\algmaxmin} returns $\calS$ then this is a $[\lambda, \infty)$-{$\cvp_{k}$} of $V$.
The most complicated part of proving that   {\algmaxmin} works  correctly is showing that if it  terminates with $|\calS| < k$ and reports $X>X^*$ that this is indeed true.

\begin{lemma}
	\label{lemma::Max-MinKeyLemma}
	If Algorithm {\algmaxmin} terminates with $|\calS| < k$, then $X>X^*$.
\end{lemma}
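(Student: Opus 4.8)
The plan is to argue by contradiction: suppose $|\calS| < k$ but $X \le X^*$, so there exists a connected $k$-partition $T_1, \dots, T_k$ of $G$ with $w(T_i) \ge X$ for every $i$. The goal is to derive a contradiction by showing that such a partition forces $|\calS| \ge k$. The key observation is a weight-counting argument: every vertex set produced by {\algmaxmin} (the sets in the $\calS^i$'s coming from {\algBalancedPartition}, and the sets $S^h$ built around the heavy vertices in $H$ together with the small components in $\calQ$) has weight at least $\lambda = \lfloor X/(c-1) \rfloor$. On the other hand, the total weight is $w(G) \ge kX$. So naively $|\calS| \ge kX / \big((c-1)\lambda\big)$, but $(c-1)\lambda$ can be as small as $X - (c-1)$, which is not quite enough to conclude $|\calS| \ge k$ directly; the slack has to be recovered from the structure of the $T_i$'s, not just from global weight.

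First I would set up the correspondence between the optimal partition $T_1, \dots, T_k$ and the pieces of the decomposition. Each $T_i$ is connected and has weight $\ge X \ge \lambda$ (indeed $\ge (c-1)\lambda$ up to the floor, but at least it cannot be "small"). I would then look at how $T_i$ interacts with $H$: either $T_i$ contains a vertex of $H$, or $T_i$ is entirely contained in $G - H$. In the first case, charge $T_i$ to one of the sets $S^h$; since each $h \in H$ lies in a distinct $S^h$ and $|\calS^H| = |H|$, the number of $T_i$'s meeting $H$ is at most... this needs care, because several $T_i$ could meet the same $S^h$ region — so instead I would charge via the heavy vertices themselves, noting each $T_i$ meeting $H$ can be assigned a heavy vertex and distinct $T_i$'s get distinct heavy vertices (a $T_i$ could contain several, pick one), giving at most $|H| = |\calS^H|$ such parts. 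The remaining $T_i$'s live inside the components $V_1, \dots, V_\ell$ and the small components $\calQ$; but each $Q \in \calQ$ has weight $< \lambda \le X$, so a $T_i$ contained in $G - H$ and meeting $\calQ$ must also reach into some $V_j$ (or merge several $Q$'s via a vertex of $H$, contradiction since it avoids $H$ — actually $Q$'s in different components are only connected through $H$, so such a $T_i$ touches exactly one $V_j$ plus adjacent $Q$'s).

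The crux is then a per-component comparison: restrict the $T_i$'s lying inside (the closure of) a single component $V_j$ together with its attached small components, and show their number is at most $|\calS^j|$ plus the share of $\calS^H$ already accounted for. This is where I expect the main obstacle: {\algBalancedPartition} is greedy and its output $\calS^j$ has $m_j$ pieces each of weight in $[\lambda, (c-1)\lambda)$ (after the last-piece merge), so $w(V_j) < (c-1)\lambda \cdot m_j$, i.e. $m_j > w(V_j)/((c-1)\lambda)$; meanwhile the number of $T_i$'s whose intersection with $V_j$ carries "most" of their weight is at most $w(V_j)/\lambda$ roughly — but these two bounds go the wrong way by the factor $c-1$. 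The resolution must be that a $T_i$ spread across several components $V_j$ still gets counted only once, and the weight it "spends" in $V_j$ need only be compared against $\lambda$, not $X$; summing $w(V_j) \ge \lambda \cdot (\#\,T_i\text{'s charged to } V_j)$ over all $j$, plus $w(H \cup V(\calQ)) \ge \lambda |\calS^H|$, against $w(G) \ge kX \ge k(c-1)\lambda$ — wait, that over-counts.

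So the honest statement of the difficulty: one must show $|\calS| \ge k$ by exhibiting an injection from $\{T_1, \dots, T_k\}$ into $\calS$ (or a fractional/weight-based surjection the other way), and the heart of the argument is that {\algBalancedPartition} never "wastes" weight — every piece it forms except possibly the last in each component has weight strictly below $(c-1)\lambda$ but at least $\lambda$, and crucially each $T_i$ has weight at least $X = (c-1)\lambda$ (up to the floor), so $T_i$ cannot be "absorbed" into the small-weight slack. I would make this precise by defining, for each piece $S \in \calS$, its weight contribution, observing $\sum_{S \in \calS} w(S) = w(G) \ge kX$, and separately bounding $w(S) < (c-1)\lambda \le X$ for the {\algBalancedPartition}-pieces (using that the merge keeps it under $(c-1)\lambda$ — here is a subtlety, the merged last piece has weight $< 2(c-1)\lambda$, which must be handled, perhaps by a sharper bound or by noting it only happens once per component and $\ell$ is controlled) and $w(S^h) $ could be large but then it should be split; actually the cleanest route is: show $w(S) < X$ is impossible to guarantee, so instead use that the number of pieces of weight $\ge \lambda$ needed to cover weight $w(G)$ under the constraint that the optimum packs $k$ parts of weight $\ge X$ each — formalize via a charging where each $T_i$ donates $\lambda$ worth of "demand" to some component, total demand $\ge k\lambda$, each component $V_j$ supplies $\ge \lambda$ per piece so supplies $m_j \lambda$... giving $\sum m_j + |\calS^H| \ge k$, i.e. $|\calS| \ge k$, contradiction. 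The technical heart is verifying this charging is well-defined, i.e. every $T_i$ really can donate its $\lambda$ to a place with matching supply, which uses connectivity of $T_i$ and the fact that $T_i$, being heavy, cannot hide inside the $<\lambda$-weight components of $\calQ$.
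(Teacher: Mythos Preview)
Your overall skeleton---contradiction, split the optimal parts $T_1,\dots,T_k$ according to whether they meet $H$, and count separately---is exactly the paper's approach. But the proposal has one structural misunderstanding and never lands the decisive inequality, so as written it does not go through.

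\textbf{The structural error.} You write that a $T_i$ contained in $G-H$ and meeting some $Q\in\calQ$ ``must also reach into some $V_j$''. This is false: the $V_j$'s and the $Q$'s are \emph{distinct connected components} of $G-H$, so a connected set avoiding $H$ lies entirely in one of them. Hence a $T_i$ avoiding $H$ is either contained in a single $V_j$ or in a single $Q$; the latter is impossible since $w(Q)<\lambda\le X\le X^*$. So every $T_i$ either contains some $h\in H$ (at most $|H|=|\calS^H|$ such parts, by disjointness), or is entirely contained in one $V_j$. Once you see this, there is no ``spreading across several components'' to worry about, and no charging scheme is needed---just a per-component count.

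\textbf{The per-component bound you are circling but never write down.} Fix $j$ and let $\calV^j$ be the optimal parts contained in $V_j$. Work with the \emph{unmerged} output of \algBalancedPartition\ on $V_j$: it has $m_j$ pieces, each of weight strictly less than $(c-1)\lambda\le X$. Then
\[
m_j\,X \;\ge\; m_j\,(c-1)\lambda \;>\; \sum_{p=1}^{m_j} w(S^j_p) \;=\; w(V_j) \;\ge\; \sum_{T\in\calV^j} w(T) \;\ge\; |\calV^j|\,X^* \;\ge\; |\calV^j|\,X,
\]
so $|\calV^j|<m_j$, i.e.\ $|\calV^j|\le m_j-1$. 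After the possible merge of the last piece, $|\calS^j|\ge m_j-1\ge |\calV^j|$. This is the whole argument; your worry that the merged last piece could weigh up to $2(c-1)\lambda$ is a red herring---you bypass it by counting against the unmerged $m_j$ pieces and using the strict inequality to drop by one. Summing over $j$ and adding the $|H|$ parts meeting $H$ gives $k=|\calS^*|\le |H|+\sum_j|\calS^j|=|\calS|$, the desired contradiction.
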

\begin{proof}
	Let $H,\calQ,\calV = \{V_1,\dots,V_{\ell}\}$ be the computed vertices and connected vertex sets in the algorithm for $\lambda=\lfloor X/(c-1)\rfloor$, respectively.
	Recall that $w(V_i) \geq \lambda$ for every $V_i \in \calV$ and $w(Q) < \lambda$ for every $Q \in \calQ$.
	Let $\calS^* = \left\{S^*_1, \dots, S^*_k\right\}$ be an optimal solution of $(G,k)$, i.e., $\calS^*$ is an $[X^*, \infty)$-{$\cvp_{k}$} of $V$. 	
	Consider the sets $\calV^{H \cup \calQ} := \left\{S^*_i \in \calS^*|  S^*_i \cap (H \cup V(\calQ)) \ne \varnothing \right\}$, $\calV^{1} := \left\{S^*_i \in \calS^*|  S^*_i \cap V_1 \neq \varnothing \right\} \setminus \calV^{H \cup \calQ}$, $\dots$, $\calV^{\ell} := \left\{S^*_i \in \calS^*|  S^*_i \cap V_{\ell} \neq \varnothing \right\} \setminus \mathcal{V}^{H \cup \calQ}$.
	We claim that these sets are a partition of $\calS^*$. This follows directly from the fact that $H$ separates all $V_i \in \calV$ and all $Q \in \calQ$ from each other.
	That is, for an $i \in [k]$ and $j \in [\ell]$ the  connected vertex set $S^*_i$ with $S^*_i \cap V_j \neq \varnothing$ and $S^*_i \cap V \setminus V_j \neq \varnothing$ contains at least one $h \in H$ and hence $S^*_i \in \calV^{H \cup \calQ}$.	Otherwise, if $S^*_i \subseteq V_j$, then $S^*_i \in \calV^j$.
	
	Suppose {\algmaxmin} terminates with $|\calS|<k$ although $X\le X^*$. 	
	We show that $\left|\mathcal{V}^{H \cup \calQ}\right| \leq |H|$ and $\left| \mathcal{V}^{i}  \right| \leq |\calS^{i}|$ for every $i \in [\ell]$, implying that $\left|\calS^*\right|\leq |H| + \sum_{i=1}^{\ell} |\calS^{i}| = |\calS|<k$, which contradicts $\left|\calS^*\right|=k$.
	
	First, we show $\left|\calS^{H \cup \mathcal{Q}}\right| \leq |H|$.
	For this, it is sufficient to prove that $S^*_i \cap H \neq \varnothing$ for each $S^*_i \in \calV^{H \cup \calQ}$ as $\calS^*$ is a partition of $V$.
	We prove this by contradiction.
	Suppose there is an $S^*_i\in \calV^{H \cup \calQ}$, such that $S^*_i \cap H = \varnothing$.
	This implies that $S^*_i \subseteq Q$ for some $Q \in \calQ$, since $H$ separates every $Q \in \calQ$ from every other $Q' \in \calQ \setminus \{Q\}$ and from the vertices $V \setminus (H \cup V(\calQ))$.
	Thus, $w(S_i^*) \leq w(Q) < \lambda$ by the definition of $\calQ$ and therefore $w(S_i^*)<\lambda= \lfloor X/(c-1)\rfloor \le \lfloor X^*/(c-1)\rfloor$, contradicting $\min_{i \in [k]} w(S^*_i)=X^*$.
	
	It remains to show that $\left| \mathcal{V}^{i}  \right| \leq |\calS^{i}|$ for every $i \in [\ell]$.
	Fix an $i \in [\ell]$ and let $G[V_i]$ with $\lambda$ be the input when calling algorithm {\algBalancedPartition}.
	Observe that the input is valid, since $G[V_i]$ is connected by definition and $w(G[V_i])\geq \lambda \geq \max_{v \in V_i} w(v) $ as $H$ contains all vertices that have weight more than $\lambda$.
	Algorithm {\algBalancedPartition} provides a {\cvp} $\calS^i = \{S^i_1,\dots,S^i_{m_i}\}$ of $V_i$ with $w(S^i_j) \in [\lambda, (c-1)\lambda)$ for every $j \in [m_i - 1]$ and $w(S^i_{m_i}) < (c-1)\lambda$. 
	Consider $\calS^i$ before merging, i.e.~we do not merge $S^i_{m_i}$ to $S^i_{m_i-1}$ in the algorithm {\algmaxmin} if $w(S_{m_i}) < \lambda$.
	That is, $w(S_m) < \lambda$ is possible, and we need to show $\left| \mathcal{V}^{i}  \right| \leq m_i -1 = |\calS^{i}| -1$.
	Observe for $S^* \in \calV^{i}$ that $S^* \subseteq V_i$, i.e.~$\sum^{m_i}_{j=1} w(S^i_j) \geq \sum_{S^* \in \calV^{i}} w(S^*)$.
	As a result, we have $|\calS^{i}| X \geq |\calS^{i}| (c-1)\lambda > \sum^{m_i}_{j=1} w(S^i_j) \geq \sum_{S^* \in \calV^{i}} w(S^*) \geq |\calV^{i}|X^*$.
	Consequently, by $X \leq X^*$ we obtain  $|\calV^{i}| < |\calS^{i}|$, which leads  to $|\calV^{i}| \leq |\calS^{i}| - 1$.
\end{proof}

	\section{Approximation of the Gy\H{o}ri-Lov\'{a}sz Theorem for \texorpdfstring{$k$}{k}-connected Graphs}

Our algorithms for the approximate GL theorems are based mainly on the following combinatorial lemma concerning certain vertex separators, that leads to useful structures in $k$-connected graphs.
Let $G = (V,E,w)$ be a connected vertex-weighted graph and let $\lambda$ be an integer.
We say $s \in V$ is a $\lambda$-separator if all connected components of $G - \{s\}$ weigh less than $\lambda$.
We say $G$ is \emph{$\lambda$-dividable} if there is a $[\lambda,\infty)$-{\cvp}$_2$  of $V$.

\begin{lemma}[\cite{casel2020balanced}]
	\label{lemma::divideInto2Comp}
	Let $G = (V,E,w)$ be a connected vertex-weighted graph and let $\lambda > w_{\max}$ be an integer.
	If $w(G) > 3(\lambda - 1)$, then either $G$ is $\lambda$-dividable or there is a $\lambda$-separator.
	Furthermore, finding the connected vertex sets in case $G$ is $\lambda$-dividable and finding the $\lambda$-separator in the other case can be done in $\mathcal{O}(|V|\,|E|)$ time.
\end{lemma}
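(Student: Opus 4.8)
The plan is a constructive, DFS-tree based argument; the running-time bound will follow because every step below is a DFS-type computation or a linear scan. We may assume $\lambda\ge 2$ (since $\lambda>w_{\max}\ge 1$), so the hypothesis $w(G)>3(\lambda-1)$ gives the integer inequality $w(G)\ge 3\lambda-2$, in particular $w(G)\ge 2\lambda$. Fix a DFS tree $T$ of $G$ rooted at an arbitrary vertex $r$ and set $s(v):=w(T_v)$. The elementary fact we lean on is that for $v\ne r$ both $V(T_v)$ and $V\setminus V(T_v)$ induce connected subgraphs (the latter since deleting a rooted subtree from a tree leaves a tree). Hence, if some vertex $v$ has $\lambda\le s(v)\le w(G)-\lambda$, we output $\{V(T_v),\,V\setminus V(T_v)\}$ as the desired $[\lambda,\infty)$-$\cvp_2$. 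In the remaining analysis we will, in each subcase, either produce such a $\cvp_2$ or produce a vertex $p$ all of whose components in $G-\{p\}$ have weight $<\lambda$; the latter is a $\lambda$-separator, and it simultaneously certifies that $G$ is \emph{not} $\lambda$-dividable, since in any connected $2$-partition the part avoiding $p$ lies inside a single component of $G-\{p\}$ and hence has weight $<\lambda$.

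So assume no vertex lies in the ``window'' $[\lambda,\,w(G)-\lambda]$, i.e.\ every vertex satisfies $s(v)<\lambda$ or $s(v)>w(G)-\lambda$. Descending from $r$ and repeatedly stepping to a child $c$ with $s(c)>w(G)-\lambda$ — at most one child can qualify, as two would force $s(c_1)+s(c_2)>2(w(G)-\lambda)\ge w(G)\ge s(c_1)+s(c_2)$ — we reach a vertex $p$ with $s(p)>w(G)-\lambda$ whose every child $c$ has $s(c)<\lambda$. Now we examine $G-\{p\}$, and here the DFS structure is crucial: inside $V(T_p)\setminus\{p\}$ the components are exactly the child-subtrees $T_c$, and such a $T_c$ merges with the outside set $A:=V\setminus V(T_p)$ in $G-\{p\}$ precisely when $T_c$ emits a back edge strictly above $p$ (read off from low-points). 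If $p=r$, then $A=\varnothing$ and the components of $G-\{r\}$ are exactly the $T_c$, each of weight $<\lambda$, so we are in the separator case. Otherwise let $B$ be the set of children $c$ of $p$ whose subtree back-connects to $A$ and put $M:=A\cup\bigcup_{c\in B}V(T_c)$, the component of $G-\{p\}$ containing $A$; note $w(A)=w(G)-s(p)<\lambda$. If $w(M)<\lambda$, then all components of $G-\{p\}$ weigh $<\lambda$ and we are again in the separator case. If $w(M)\ge\lambda$ and $w(G)-w(M)\ge\lambda$, then $\{M,\,V\setminus M\}$ is the desired $\cvp_2$, since $V\setminus M=\{p\}\cup\bigcup_{c\notin B}V(T_c)$ is connected ($p$ is adjacent to each of its children).

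The one remaining case — $w(M)\ge\lambda$ but $\rho:=w(G)-w(M)<\lambda$ — is the crux, and the step I expect to be the main obstacle. Here $M$ is ``too heavy'' even though it is only a union of the light pieces $A$ and $\{T_c:c\in B\}$, which pairwise touch only through $A$, so one cannot simply move some $T_c$'s out while preserving connectivity. The idea is to re-split: keep on one side $\{p\}$ together with all child-subtrees outside $B$ (of total weight $\rho$) plus a subset $C_1\subsetneq B$ chosen greedily so that $\sigma:=\sum_{c\in C_1}s(c)$ first reaches $\lambda-\rho$, and put $A$ together with $\{T_c:c\in B\setminus C_1\}$ on the other side. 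Both sides are connected ($p$ is adjacent to all its children; each $c\in B$ back-connects to $A$), the first has weight $\rho+\sigma\ge\lambda$, and the second has weight $w(M)-\sigma$; so it remains only to check $\sigma\le w(M)-\lambda$. Since each $s(c)<\lambda$, the greedy overshoots $\lambda-\rho$ by at most $\lambda-1$, so it suffices that $(\lambda-\rho-1)+(\lambda-1)\le w(M)-\lambda$, which — substituting $w(M)=w(G)-\rho$ — is exactly $w(G)\ge 3\lambda-2$, i.e.\ the hypothesis $w(G)>3(\lambda-1)$; this is the only place the constant $3$ is used. One also verifies $\sum_{c\in B}s(c)=w(M)-w(A)>w(M)-\lambda\ge\lambda-\rho$, so the greedy indeed reaches its target and stops strictly before exhausting $B$ (hence $C_1\subsetneq B$ and the second side has weight $\ge\lambda$). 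Finally, the DFS tree, the subtree weights, the low-points (hence $B$), and all the scans and the greedy selection are computable in $\mathcal{O}(|V|+|E|)$ time, well within the claimed bound.
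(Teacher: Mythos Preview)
This lemma is quoted from \cite{casel2020balanced}; the present paper does not supply a proof, so there is no ``paper's own proof'' to compare against. Your argument is correct and self-contained. The DFS-tree approach is natural: the absence of cross edges makes the child subtrees $T_c$ the right building blocks, and your greedy repacking in the final case is exactly where the hypothesis $w(G)>3(\lambda-1)$ is needed---the arithmetic $\sigma\le(\lambda-\rho-1)+(\lambda-1)\le w(G)-\rho-\lambda=w(M)-\lambda$ checks out, and your verification that $C_1\subsetneq B$ (via $\sum_{c\in B}s(c)=w(M)-w(A)>w(M)-\lambda\ge\sigma$) closes the loop. The side remark that a $\lambda$-separator certifies non-dividability is correct but not required for the lemma as stated (the disjunction is not claimed exclusive).

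One point worth flagging: your construction runs in $\mathcal{O}(|V|+|E|)$ time (one DFS, subtree weights, low-points, and a linear scan), which is strictly better than the $\mathcal{O}(|V|\,|E|)$ bound quoted in the lemma. This is not a gap in your proof---quite the opposite---but since the paper uses the $\mathcal{O}(|V|\,|E|)$ bound in its running-time analyses downstream, the discrepancy suggests the original proof in \cite{casel2020balanced} may take a different, less direct route.
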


\subsection{Bounded Partition for $k$-connected Graphs}
\label{section::GL}
In this section, we give an algorithm for computing approximate GL partitions with one-side approximation bound (either lower bound or upper bound), thus proving \cref{thm:one-side}. 
For this, we first prove the following theorem, from which \cref{thm:one-side} follows as below.
\begin{theorem}
	\label{thm::GLpartition}
	Let $G=(V,E,w)$ be a $k$-connected vertex-weighted graph and let $w_1,\dots,w_k\in \mathbb N$ with $\sum_{i=1}^k w_i = w(G)$, and $\min_{i\in[k]} w_i \geq \max_{v\in V}w(v)$.
 A set of connected vertex sets $\T = \{T_1, \dots, T_\ell\}$ with $\ell \leq k$ and $\alpha w_i \leq w(T_i) \leq 3 \alpha w_i$ for every $i \in [\ell]$ can be computed in time $\O(k|V|^2|E|)$.
	Moreover, if $\ell < k$, then $\T$ is also a {\cvp} of $V$.
\end{theorem}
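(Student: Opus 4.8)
The plan is to build the sets $T_1,\dots,T_\ell$ greedily, one at a time, by repeatedly peeling off a connected chunk of the appropriate weight while keeping the remainder connected and still highly connected. Fix $\alpha := \tfrac{1}{3}\cdot$(something), but more concretely the natural choice is to aim for each $T_i$ to weigh roughly $w_i$ up to a factor of $3$, i.e.\ we set $\alpha$ so that $\alpha w_i$ is always at least $w_{\max}$ (possible by the hypothesis $\min_i w_i \ge w_{\max}$) and apply Lemma~\ref{lemma::divideInto2Comp} with threshold $\lambda = \alpha w_i + 1$ or similar. The key invariant I would maintain after having extracted $T_1,\dots,T_{j-1}$: the remaining graph $G_j := G - (T_1\cup\cdots\cup T_{j-1})$ is connected and has weight $\sum_{i\ge j} w_i$ (so on average the leftover per remaining part is unchanged). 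At step $j$, if $w(G_j) \le 3\alpha w_j$ then I am essentially done — either $G_j$ itself already satisfies the bounds and I can stop with $\ell = j$ having output a full \cvp{} of $V$, or I need a short argument handling the tail; if $w(G_j) > 3(\lambda-1)$ with $\lambda = \alpha w_j+1 > w_{\max}$, Lemma~\ref{lemma::divideInto2Comp} gives me either a $[\lambda,\infty)$-\cvp$_2$ of $G_j$ — from which I take the lighter side, which then has weight in $[\lambda, w(G_j)-\lambda)$, and recurse on this side until its weight drops below $3\alpha w_j$, at which point it lies in $[\alpha w_j, 3\alpha w_j]$ and becomes $T_j$ — or a $\lambda$-separator $s$, whose removal leaves all components of weight $< \lambda \le \alpha w_j$; in that case I greedily union such small components (plus $s$) into a connected set of weight in $[\alpha w_j, 3\alpha w_j)$, again using that each component is light so no overshoot past $3\alpha w_j$ occurs.

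The step I expect to be the main obstacle is guaranteeing that the \emph{remainder} $G_j \setminus T_j$ stays connected (and that at the very end, if $\ell < k$, the sets exhaust $V$). Lemma~\ref{lemma::divideInto2Comp} as stated only promises a $[\lambda,\infty)$-\cvp$_2$, i.e.\ two connected parts, so when I repeatedly split the lighter side I must make sure the side I keep as "remainder" is always the one containing the rest of the graph and stays connected — this is automatic for the $\lambda$-separator case (remove a carefully chosen subset of components and re-attach $s$ so the rest stays connected, using $k$-connectivity / $2$-connectivity of $G_j$ to avoid disconnecting) but in the dividable case I need to recurse on the correct side and track connectivity of the complement across the recursion. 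Here $k$-connectivity of the original $G$ is what lets me argue the intermediate graphs $G_j$ remain connected: removing $j-1 \le k-1$ of the extracted parts, or rather the relevant separators, cannot disconnect a $k$-connected graph in the wrong way — I would phrase this as: each $G_j$ is connected, which suffices to re-invoke the lemma.

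For the termination and the "if $\ell < k$ then $\T$ is a \cvp" clause: I would argue that the process stops either because I have produced $k$ sets, or because at some step $w(G_j) < \alpha w_j$ is impossible (since $w(G_j) = \sum_{i \ge j} w_i \ge w_j \ge \alpha w_j$ when $\alpha \le 1$), so the only way to finish with $\ell < k$ is to have absorbed all remaining weight into $T_\ell$ in a single final chunk; I must check that this last chunk still respects the upper bound $3\alpha w_\ell$, which follows because at the moment I decide not to split further, $w(G_\ell) \le 3(\lambda - 1) < 3\alpha w_\ell$. Finally, the running time: each extraction of one $T_j$ involves $O(|V|)$ invocations of Lemma~\ref{lemma::divideInto2Comp} (one per recursive split, each dropping the weight of the working set), each costing $O(|V||E|)$, and there are at most $k$ values of $j$, giving the claimed $\O(k|V|^2|E|)$. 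The bulk of the write-up will be the careful bookkeeping of connectivity of the remainder through the recursion and the case split between the dividable and the separator outcomes of Lemma~\ref{lemma::divideInto2Comp}.
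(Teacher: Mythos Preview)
Your plan has a genuine gap at exactly the place you flag as ``the main obstacle'': the invariant that $G_j = G - (T_1 \cup \cdots \cup T_{j-1})$ stays connected cannot be maintained, and your justification via $k$-connectivity conflates removing $j-1$ \emph{vertices} with removing $j-1$ \emph{vertex sets}. Removing the sets $T_1,\dots,T_{j-1}$ may delete far more than $k-1$ vertices, so $k$-connectivity of $G$ gives you nothing about $G_j$. Concretely, look at the $\lambda$-separator outcome of Lemma~\ref{lemma::divideInto2Comp} in $G_j$: every component of $G_j - s$ has weight $< \lambda \le \alpha w_j$, so any connected $T_j$ with $w(T_j) \ge \alpha w_j$ must contain $s$ together with several of those components; but then $G_j - T_j$ is a union of the remaining components, which are pairwise separated in $G_j$ by $s$ and hence disconnected. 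Your sentence ``this is automatic for the $\lambda$-separator case'' is therefore exactly backwards. (The side remark that $w(G_j) = \sum_{i\ge j} w_i$ is also false unless each $w(T_i)=w_i$, but that is minor by comparison.)

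The paper's proof does \emph{not} keep the remainder connected and does \emph{not} freeze $T_1,\dots,T_{j-1}$ once they are extracted. Instead it allows $\overline{G}=G\setminus V(\T)$ to fragment into small pieces and, whenever no $i$-big component is available, runs an inner loop that pushes small components $Q$ of $\overline{G}$ into previously built sets $T_j$ and, if necessary, pushes other pieces of $T_j$ back out into $\overline{G}$ (via a divide step if $T_j$ has no $\alpha w_j$-separator, or by detaching a component of $T_j\setminus\{s_j\}$ if it does). The role of $k$-connectivity is precisely here, and it is used in the correct ``$<k$ vertices'' form: with fewer than $k$ sets built, the at most $k-1$ separator vertices $s_j$ of the sets in $\T_b$ cannot disconnect $G$, so every small $Q$ has an edge either to some $T_j\in\T_a$ or to some component of $C(T_j)$ for $T_j\in\T_b$ (Lemma~\ref{lemma::connectionQ}). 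A potential argument then shows this reshuffling terminates with an $i$-big component reappearing in $\overline{G}$. Your greedy peel-off scheme is missing this entire mechanism; to make your approach work you would have to reinvent something equivalent to the inner loop.
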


By \cref{thm::GLpartition} we can derive \cref{thm:one-side} using $\alpha=1/3$ and $\alpha=1$  for the lower bound and upper bounded version, respectively.  \cref{app:BoundedGL} gives a detailed proof.

In the following we always assume that $w_1, \dots, w_k$ is sorted in descending order.
To now give the algorithm proving \cref{thm::GLpartition}, we make use of \cref{lemma::divideInto2Comp}. For this, we first need to ensure that $w_{\max} < \alpha w_k$.
Therefore, we perform a preprocessing step until we reach an instance that satisfies $w_{\max} < \alpha w_k$.
We give this preprocessing step in \cref{app:preprocess}.
After this step, we can assume that we have a $k$-connected graph $G = (V,E,w)$ and natural numbers $w_1, \dots, w_k$ sorted in descending order, where $\sum_{i=1}^k w_i \leq w(G)$ and $\wmax < \alpha w_k$. 

On such a graph $G$ we then gradually build a packing $\T$ with the help of \cref{lemma::divideInto2Comp}. During our algorithm to build  $\T$ we ensure that at each step $\T=\left\{ T_1,T_2,\dots,T_{i-1} \right\}$ where each $T_j\in \T$ is a connected vertex set with weight in $[\alpha w_j , 3\alpha w_j]$ for each $j\in [i-1]$.
We then search in the remaining graph for the next set $T_i$ and always use $\og$ to denote the graph $G\setminus V(\T)$.	We say a connected subgraph is $i$-\emph{small} if it has weight less than $\alpha w_i$ and $i$-\emph{big} otherwise. In case we reach a situation, where  $\og$ has no connected component that is $i$-\emph{big}, we have to alter the already built sets $T_1,T_2,\dots,T_{i-1}$ to build $T_i$. For this, we use $\T_a$ to denote  the set of all $T_j \in \T$ that have \emph{no} $(\alpha w_j)$-separator, and $\T_b$ to denote the set of all $T_j \in \T$ that have an $(\alpha w_j)$-separator. For $T_j\in \T_b$ with an $(\alpha w_j)$-separator $s$  we use $C(T_j)$ to denote the connected components of $G[T_j \setminus \left\{ s \right\}]$ (if there is more than one $(\alpha w_j)$-separator, fix one of them arbitrarily). The following Algorithm \algBoundedGL{} formally explains our routine to build $\T$.



\paragraph*{Algorithm \algBoundedGL}
\begin{enumerate}
\item
	\label{algBoundedGL::Init}
	Initialize $\T := \varnothing$ as container for the desired  connected-vertex-packing $T_1, \dots, T_k$ of $G$
	and initialize $i:=1$ as an increment-variable.
	
\item
	While $\og=G\setminus V(\T)$ is not the empty graph: \texttt{//main loop}
	\begin{enumerate}[label*=\arabic*.,ref=\theenumi.\arabic*]

	\item
	\label{algBoundedGL::For_i}
	Find a connected vertex set $T_{i}$ having weight in $[\alpha w_{i},3 \alpha w_{i}]$,
	add $T_{i}$ to $\T$, and increment $i$ by one.
	If $i=k+1$ then terminate the algorithm. \\
	\texttt{// See \cref{lem:bigcomp} for correctness of this step}
		\item
	\label{algBoundedGL::RemoveQ}
	While $\og$ is not empty and has no $i$-big connected component:
	\texttt{//inner loop}\\
	Pick an $i$-small connected component $Q$ of $\og$. 
	Pick a $T_j\in \T$ such that
	either $T_j\in \T_a$ and $Q$ has an edge to $T_j$ (Case 1), or $T_j\in\T_b$ and $Q$ has an edge to some component $Q'\in C(T_j)$ (Case 2).
	\texttt{// The occurrence of at least one of these cases is shown in \cref{lemma::connectionQ}.}\\
	If $w(T_j\cup Q)\le 3\alpha w_j$ then update $T_j$ to $T_j \cup Q$. 
	Otherwise:
	\begin{enumerate}[label*=\arabic*.,ref=\theenumii.\arabic*]
		\item 
		\label{algBoundedGL::Case1}
		Case 1 ($T_j\in \T_a$): 
		Apply the following 
		\textbf{\emph{Divide-routine}} on $T_j\cup Q$: 
		Use \cref{lemma::divideInto2Comp} to compute a $[\alpha w_j, \infty )$-CVP$_2$ $V_1, V_2$ of $T_j\cup Q$.
		Set $T_{j} = V_2$ (i.e. $V_1$ goes to $\og$).
		\item
		\label{algBoundedGL::Case2}
		Case 2 ($T_j\in\T_b$):
		remove $Q'$ from $T_j$ (i.e.~$Q'$ goes back to $\og$) if $T_j \cup Q$ is not $\alpha w_j$-dividable. Otherwise, apply divide routine on $T_j \cup Q$.
	\end{enumerate}
	\end{enumerate}

\end{enumerate}
To prove the correctness of the algorithm, we show that the following invariant is maintained.
\begin{lemma}
	\label{lem:invT}
Algorithm \algBoundedGL{} maintains a packing $\T=\left\{ T_1,T_2,\dots,T_{i-1} \right\}$ where each $T_j\in \T$ is a connected vertex set having weight in $[\alpha w_j , 3\alpha w_j]$.
\end{lemma}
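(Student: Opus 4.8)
\textbf{Proof plan for Lemma~\ref{lem:invT}.}

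The plan is to prove the invariant by induction on the number of executed elementary operations (i.e.\ single iterations of either the main loop or the inner loop), showing that every such operation preserves the stated property of $\T$. At initialization $\T=\varnothing$ and $i=1$, so the invariant holds trivially. For the inductive step I would distinguish the two places where $\T$ is modified. First, in Step~\ref{algBoundedGL::For_i}: assuming $\og$ currently has an $i$-big connected component (this is exactly the loop condition together with Lemma~\ref{lem:bigcomp}, which I may invoke), I would apply \cref{lemma::divideInto2Comp} inside that component to cut off a connected piece $T_i$ of weight in $[\alpha w_i,3\alpha w_i]$ — or, if the component is already small enough in total, take it directly — and since this new $T_i$ is carved out of $\og$ it is disjoint from all existing $T_j$, so $\T$ remains a packing and the weight bound for index $i$ holds by construction. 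The preprocessing guarantee $\wmax<\alpha w_i$ (combined with $\alpha w_i\le$ the weight of the $i$-big component) is what makes \cref{lemma::divideInto2Comp} applicable here and keeps the resulting weights inside the window; I would make this dependency explicit.

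The more delicate part is the inner loop (Step~\ref{algBoundedGL::RemoveQ}), where an $i$-small component $Q$ of $\og$ is absorbed into some already-built $T_j$ and possibly re-split. I would treat the cases separately. If $w(T_j\cup Q)\le 3\alpha w_j$, then we simply set $T_j\leftarrow T_j\cup Q$; connectivity is preserved because $Q$ has an edge to $T_j$ (Case~1) or to a component $Q'$ of $G[T_j\setminus\{s\}]$ hence to $T_j$ (Case~2), disjointness is preserved because $Q$ left $\og$, and the lower bound $w(T_j)\ge\alpha w_j$ still holds since we only added vertices; the upper bound is exactly the case hypothesis. If instead $w(T_j\cup Q)>3\alpha w_j$, I would argue that $w(T_j\cup Q)>3\alpha w_j\ge 3(\alpha w_j)>3(\alpha w_j-1)$, and $\alpha w_j>\wmax\ge w_{\max}$, so \cref{lemma::divideInto2Comp} applies to $G[T_j\cup Q]$: in Case~1 the Divide-routine yields a $[\alpha w_j,\infty)$-CVP$_2$ $V_1,V_2$, and setting $T_j\leftarrow V_2$ gives a connected set with $w(V_2)\ge\alpha w_j$, while $w(V_2)\le w(T_j\cup Q)-w(V_1)\le w(T_j\cup Q)-\alpha w_j$ — here I would need the bound $w(T_j\cup Q)\le 3\alpha w_j+\alpha w_j$ or a sharper estimate to conclude $w(V_2)\le 3\alpha w_j$; the clean way is to note $w(Q)<\alpha w_i\le\alpha w_j$ (since $Q$ is $i$-small and the weights are sorted descending, so $w_i\le w_j$ for $j\le i$), hence $w(T_j\cup Q)<3\alpha w_j+\alpha w_j=4\alpha w_j$, and then $w(V_2)\le w(T_j\cup Q)-\alpha w_j<3\alpha w_j$. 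In Case~2, either $T_j\cup Q$ is not $\alpha w_j$-dividable and we remove $Q'$ from $T_j$: I would show $G[T_j\setminus\{s\}\setminus Q']\cup\{s\}$ — i.e.\ $T_j$ with the component $Q'$ deleted — is still connected (because $s$ is a cut vertex and the remaining components each attach to $s$) and still has weight $\ge\alpha w_j$ (this needs that removing a single component $Q'$ of a non-$\alpha w_j$-dividable set cannot drop the total below $\alpha w_j$; I would derive this from the fact that if it did, then $Q'$ together with the rest would have been an $\alpha w_j$-dividable partition, or use the $\lambda$-separator structure directly); or $T_j\cup Q$ is $\alpha w_j$-dividable and we apply the Divide-routine as in Case~1.

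The main obstacle I anticipate is the bookkeeping in Case~2: showing that after deleting the component $Q'$, the set $T_j$ still has weight at least $\alpha w_j$ and stays connected, and reconciling this with the fact that we earlier defined $C(T_j)$ relative to a fixed $(\alpha w_j)$-separator $s$ that may itself change as $T_j$ evolves. I would handle this by carefully re-reading the definitions of $\T_a,\T_b$ and the separator $s$ after each update, and arguing that whenever the inner loop touches $T_j$, its classification into $\T_a$ or $\T_b$ and the choice of $s$ are re-evaluated on the current $T_j$, so the structural facts used (cut-vertex property of $s$, components attaching to $s$, the dividability dichotomy from \cref{lemma::divideInto2Comp}) are always applied to the current set. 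The remaining details — connectivity after unions and after component deletion, and the numeric weight inequalities using $w(Q)<\alpha w_j$ and $\alpha w_j>w_{\max}$ — are routine once the case analysis is set up, so I would state them briefly rather than belabor them.
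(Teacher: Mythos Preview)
Your proposal follows essentially the same case analysis as the paper's proof, and the handling of Step~\ref{algBoundedGL::For_i} and Case~1 of the inner loop matches the paper closely (including the key estimate $w(T_j\cup Q)<4\alpha w_j$ via $w(Q)<\alpha w_i\le\alpha w_j$).

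Where you diverge from the paper --- and make life harder for yourself --- is the lower-bound argument in the non-dividable subcase of Case~2. You propose to derive $w(T_j\setminus Q')\ge\alpha w_j$ from the fact that $T_j\cup Q$ is not $\alpha w_j$-dividable, but that route is both indirect and not obviously sound as you sketched it (if $w(T_j\setminus Q')<\alpha w_j$ there is no immediate candidate pair of connected pieces each of weight $\ge\alpha w_j$, so you do not get a clean contradiction with non-dividability). The paper's argument is purely arithmetic and does not use non-dividability at all: since $s$ is an $(\alpha w_j)$-separator of $T_j$, every component of $G[T_j\setminus\{s\}]$ --- in particular $Q'$ --- has weight $<\alpha w_j$; and since $w(T_j\cup Q)>3\alpha w_j$ while $w(Q)<\alpha w_j$, we get $w(T_j)>2\alpha w_j$, hence $w(T_j\setminus Q')>\alpha w_j$. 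Connectedness of $T_j\setminus Q'$ is immediate because the remaining components of $C(T_j)$ all attach to $s$. Your alternative suggestion to ``use the $\lambda$-separator structure directly'' is exactly this, so you had the right fallback.

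Your anticipated obstacle about the separator $s$ changing as $T_j$ evolves is a runtime/bookkeeping concern (the paper handles it separately in the runtime analysis), not a correctness concern for this lemma: at the moment the step is executed, $s$ is by definition a current $(\alpha w_j)$-separator of the current $T_j$, and that is all the weight argument above needs.
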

\begin{proof}
	We increment $i$ only in Step~\ref{algBoundedGL::For_i}.
	Before incrementing $i$, we add $T_i$ to $\T$ while ensuring that $w(T_i)\in [\alpha w_i, 3\alpha w_i]$ and $G[T_i]$ is connected.
In \cref{lem:bigcomp}, we prove that whenever the divide routine is about to be executed, there is an $i$-big component in $\og$, ensuring the existence of such a $T_i$.
	Once a $T_j$ is added to $\T$, it is then modified only in Step~\ref{algBoundedGL::RemoveQ}.
	So let us look into how it gets modified in Step~\ref{algBoundedGL::RemoveQ}.
	If the condition $w(T_j\cup Q)\le 3\alpha w_i$ is satisfied then it is clear that the new $T_j=T_j\cup Q$ also satisfies the weight constraints.
	Since $Q$ has an edge to $T_j$ and $T_j$ and $Q$ each were connected, it is also clear that the new $T_j$ remains connected.
	So now consider the case when $w(T_j\cup Q)>3\alpha w_i$.
	In Case 1 ($T_j\in\T_a$),
	we call the divide routine and the new $T_j$ is the set $V_2$ returned by the routine.
	The set $V_2$ is connected due to the property of the divide routine.
	To see that it also satisfies the weight constraints, observe that $w(V_1\cup V_2)$ is at most $4\alpha w_j$ as $w(T_j)$ was at most $3\alpha w_j$ and $w(Q)<\alpha w_i\leq \alpha w_j$. Since $w(V_1),w(V_2)\ge \alpha w_j$, we then have $w(V_2)\in [\alpha w_j, 3\alpha w_j]$.
	So it only remains to consider Case 2 ($T_j\in \T_b$).
	The case when $T_j \cup Q$ is $\alpha w_j$-dividable is analog to Case 1. 
	We know $w(Q')<\alpha w_j$ by definition.
	Also, since $w(T_j\cup Q)$ was more than $3\alpha w_j$ and $w(Q)<\alpha w_j$, we have that $w(T_j)$ was at least $2\alpha w_j$.
	Thus the new $T_j=T_j\setminus Q'$ has weight in $[\alpha w_j,3\alpha w_j]$.
	Also, the new $T_j$ is connected by the definition of $Q'$.
\end{proof}

It is clear from the algorithm that termination occurs only if $\og$ is empty or $i=k+1$.
Then using Lemma~\ref{lem:invT}, it is clear that $\T$ contains the required packing as claimed in Theorem~\ref{thm::GLpartition}, provided that Step~\ref{algBoundedGL::RemoveQ} runs correctly and terminates, which we prove below.

\begin{lemma}
	\label{lem:inner_loop}
	The inner loop runs correctly and terminates after at most $|V|^2$ iterations.
\end{lemma}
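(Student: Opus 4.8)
The plan is to analyze the inner loop (Step~\ref{algBoundedGL::RemoveQ}) via a potential-function argument. First I would establish \emph{correctness}, i.e., that every time the loop body is executed, the required $T_j$ (of one of the two cases) actually exists; this is exactly the content of \cref{lemma::connectionQ} cited in the algorithm, so I would invoke it here, and also invoke \cref{lemma::divideInto2Comp} to guarantee that the Divide-routine is applicable — which requires checking that its hypotheses hold, namely that the relevant set $T_j \cup Q$ has weight exceeding $3(\alpha w_j - 1)$ (true, since we are in the branch $w(T_j \cup Q) > 3\alpha w_j$) and that $\alpha w_j > \wmax$ (true after the preprocessing step). Combined with \cref{lem:invT}, this shows each iteration leaves $\T$ a valid packing, so the loop is ``correct'' in the sense that it preserves the invariant.

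For \emph{termination}, the main work is to find a quantity that strictly decreases (or a lexicographic combination of quantities). The natural candidate is something like the pair $(|V(\T)|,\ \text{number of connected components of }\og)$, or more precisely a measure that tracks how much ``unabsorbed'' mass sits in $\og$ together with how fragmented $\T$ is. In the easy sub-case where $w(T_j\cup Q)\le 3\alpha w_j$, the set $Q$ is merged into $T_j$, so $|V(\og)|$ strictly decreases and $V(\T)$ strictly grows; this cannot happen more than $|V|$ times in a row. In Case~1 and in the $\alpha w_j$-dividable branch of Case~2, the Divide-routine replaces $T_j\cup Q$ by $V_1,V_2$ and sends $V_1$ back to $\og$ — here $|V(\T)|$ may go down, so I need a secondary argument: I would show that after a Divide, the new $T_j=V_2$ has weight at least $\alpha w_j$ and the returned part $V_1$ can be charged so that the total number of such Divide operations is bounded. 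In the remaining branch of Case~2, $Q'$ is returned to $\og$ but $Q$ was never added, so effectively $\og$ is only rearranged; I must rule out infinite cycling here.

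The hard part will be bounding the Divide/return operations: a single merge-then-split could in principle repeat. The key observation I would exploit is that each Divide or return strictly increases $w(V(\T))$ \emph{or} strictly decreases it by a controlled amount while strictly increasing a finer invariant — concretely, I expect the right potential to be $\Phi = \sum_{T_j \in \T}\big(\text{something monotone in } w(T_j)\big)$ together with $|V(\og)|$, exploiting that after Case~1 the weight $w(T_j)$ jumps from possibly below $2\alpha w_j$ up into $[\alpha w_j,3\alpha w_j]$ with $w(T_j)\ge \alpha w_j$, and that $w(T_j)$ never again drops below $\alpha w_j$ except by removing a single component $Q'$ of weight $<\alpha w_j$, which in turn only happens when $T_j$ had weight $\ge 2\alpha w_j$. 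So each "bad" step (returning mass to $\og$) is preceded by $T_j$ having accumulated enough weight, and I would count integer weight changes to get the $|V|^2$ bound: each of the $\le k\le |V|$ sets $T_j$ can undergo at most $O(|V|)$ modifications because its weight, an integer in $[0,3\alpha w_j]\subseteq[0, w(G)]$, moves monotonically within phases that are separated by at most $|V|$ merges. Summing over the at most $|V|$ sets gives the $|V|^2$ iteration bound.
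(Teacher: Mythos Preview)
Your proposal has a genuine gap, and it stems from missing the single observation that makes the paper's termination argument work: \emph{whenever the Divide-routine is executed, the inner loop terminates immediately}. Indeed, the set $V_1$ returned by the Divide-routine satisfies $w(V_1)\ge \alpha w_j$, and since $j<i$ and the $w_i$ are sorted in descending order, $\alpha w_j\ge \alpha w_i$; hence $V_1$ is an $i$-big component of $\og$ and the inner-loop condition fails. So there is at most one Divide per execution of the inner loop, and there is nothing to ``charge'' or bound via a secondary potential. Your attempt to control the number of Divide operations through weight-tracking of the $T_j$'s is not only unnecessary but also does not go through as stated: the weights change by arbitrary integer amounts, not by $\pm 1$, so the ``at most $O(|V|)$ modifications per $T_j$'' claim has no basis.

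Once you know the Divide-routine ends the loop, only two non-terminating cases remain: the merge $T_j\leftarrow T_j\cup Q$ (which deletes the component $Q$ from $\og$) and the non-dividable branch of Case~2 (which moves $Q'\subseteq T_j$ into $\og$, enlarging the component containing $Q$). The paper's argument is then simply that no new components are ever created in $\og$ and vertices are never removed from a component except when the whole component is deleted; so component count is non-increasing (at most $|V|$ deletions), and between deletions each iteration strictly adds vertices to $\og$ (at most $|V|$ times), giving the $|V|^2$ bound.

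There is also a smaller gap in your correctness discussion: checking the weight hypotheses of \cref{lemma::divideInto2Comp} is not enough, because that lemma outputs \emph{either} a $[\alpha w_j,\infty)$-$\cvp_2$ \emph{or} an $\alpha w_j$-separator. You must argue that in Case~1 the separator alternative cannot occur. The paper does this by observing that an $\alpha w_j$-separator of $T_j\cup Q$ would also be an $\alpha w_j$-separator of $T_j$ (since $w(Q)<\alpha w_i\le \alpha w_j$), contradicting $T_j\in\T_a$.
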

The idea for the  proof of \cref{lem:inner_loop} is that if the divide routine is executed in the inner loop then an $i$-big component is created, terminating the inner loop, and in the other case either a connected component is deleted from $\og$ or new vertices are added to a connected component in $\og$.
A full proof  and the runtime analysis are given in \cref{app:BoundedGL} and~\ref{app:runtimeBGL}, respectively.

It is tempting to think that one could use \cref{thm::GLpartition} to derive  a  {\cvp} $\T=\{T_1,\dots,T_k\}$ such that $\alpha w_i \leq w(T_i) \leq 3 \alpha w_i$ for each $i\in[k]$. 
If Algorithm \algBoundedGL{} terminates with $\ell<k$, then $\T$ is a partition of the vertices in $G$, and we only have trouble with the lower bound on $T_j$ for $\ell<j\leq k$. Otherwise, if it terminates with $\ell=k$, then  $\T$ satisfies all lower bounds, but might not be a partition. Assigning the remaining vertices in $G$ to turn $\T$ into a {\cvp} in this case might yield violations of the upper bound. Since $\alpha=1$ yields the first, and $\alpha=\frac 13$ the second case, one might think that choosing the correct $\alpha$ in between would result in a {\cvp} with $\ell=k$. Unfortunately, Algorithm \algBoundedGL{} does not have a monotone behaviour w.r.t.~$\alpha \in (\frac 13, 1)$ in the sense that for two values $\frac 13<\alpha_1<\alpha_2<1$, the case $\ell=k$ for $\alpha_1$ does not imply $\ell=k$ for $\alpha_2$. Thus, even if we could prove the existence of an optimal value for $\alpha$, we have no way to search for it.

\subsection{Both-side Bounded Partition for $k$-connected Graphs}
\label{sec:double-side}

In this section, we prove \cref{thm:both-side} by giving a both-side bounded approximate GL partition.
The full correctness and runtime proofs of the algorithm can be found in \cref{app:double-side}.

For achieving a simultaneous lower and upper bounded partition, as a starting point, we apply \cref{thm::GLpartition} with $\alpha = \frac{1}{3}$ obtaining a lower and upper bounded packing $\T = \{T_1, \dots, T_k\}$ with
$\frac{1}{3} w_i \leq w(T_i) \leq w_i$ for every $i \in [k]$.
As long as $\T$ is not a {\cvp} of $V$ yet, we transfer a subset of the remaining vertices $V \setminus V(\T)$ through a path in an auxiliary graph to elements in $\T$,
while making sure that for each $i$, $\frac{1}{3}w_i\le w(T_i)\le \UpBoundGL w(T_i)$.
We call one such transfer a \emph{transferring-iteration}.
We define $\T^* := \{T_1, T_2,\dots,T_j\}$ where $j$ is the smallest number such that $w(T_i) \geq w_i$ for $i \in [j]$ and $w(T_{j+1}) < w_{j+1}$. In case of $w_a = w_b$ and $w(T_a) \geq w_a$, but $w(T_b) < w_b$ we  assume that $a < b$.
Note that this is easily realizable by a relabeling of indices.
Observe that $\T^* = \varnothing$ if $w(T_1) < w_1$.
As a measure of progress, we guarantee in each transferring-iteration that either 
the cardinality of $\T^*$  increases, or 
the number of vertices in $V(\T)$ increases. 
Also, the cardinality of $\T^*$ is non-decreasing throughout the algorithm.
Note that if $T_i\in \T^*$ for all $i$, then it follows that $w(T_i)=w_i$ for all $i$ and moreover, $\T$ is a {\cvp}.

Let $\T = \{T_1, \dots, T_k\}$ be a connected packing of $V$ in $G$ with $w(T_i) \leq \UpBoundGL w_i$ for every $i \in [k]$.
We use $\calQ$ to denote the vertex sets forming the connected components of $G[V \setminus V(\T)]$. 
We define $\T^+ := \{T_i \in \T \mid w(T_i) \geq w_i\}$ and $\T^- := \T \setminus \T^+$.
Note that $\T^* \subseteq \T^+$. 
Analogous to section~\ref{section::GL}, we define $\T^+_a $ as the set of $T_i\in \T^+$ that do \emph{not} have a $w_i$-separator vertex and $\T^+_b$ to be the ones in $\T^+$ having a $w_i$-separator.
For $T_i\in \T^+_b$, we use $s(T_i)$ to denote its $w_i$-separator (if there are multiple we fix one arbitrarily) and $C(T_i)$ to denote the vertex sets forming the connected components of $G[T_i \setminus\{s(T_i)\}]$. 
We say a vertex $v \in V$ or a vertex set $V' \subseteq V$ is \emph{$\T$-assigned} if $v \in V(\T)$ or $V' \subseteq V(\T)$, respectively.
That is, the set of $\T$-assigned vertices is $V(\T)$ and $V(\calQ)$ is the set of \emph{not $\T$-assigned} vertices.
We say \emph{$\T$ is pack-satisfied} if $|\T|=k$, each $T_j \in \T$ is connected, $w(T_j) \in [\frac{1}{3} w_j, \UpBoundGL w_j]$, and the vertex sets in $\T$ are pairwise disjoint.

We define the \emph{transfer-graph} $H = (\calV_H,E_H)$ as $\calV_H := (\bigcup_{T \in \T^+_b} C(T)) \cup \T^+_a \cup \T^- \cup \calQ$ and $E_H := \{ (V_1,V_2) \in \binom{\calV_H}{2} \mid N_G(V_1) \cap V_2 \neq \varnothing\}$.

\paragraph*{Algorithm \algDoubleBoundedGL}
\begin{enumerate}[label*=\arabic*.,ref=\arabic*]
	\item 
		\label{doubleGL:lowerBound}
		Apply \cref{thm::GLpartition} with $\alpha=\frac{1}{3}$ on $G$ to obtain a connected packing $\T = \{T_1, \dots, T_k\}$ with $w(T_i)\ge \frac{1}{3} w_i $ for every $i \in [k]$.
		\item
		\label{doubleGL:while}
	While $\calQ \neq \varnothing$:
	\begin{enumerate}[label*=\arabic*.,ref=\theenumi.\arabic*]
	\item
		\label{doubleGL:findPath}
	Find a minimal path in $H$ from $\calQ$ to $\T^-$.
	Let this path be $P_Q^{T_i}$ where $Q \in \calQ$ and $T_i \in \T^-$. \\
	\texttt{// Note that all vertices in $P_Q^{T_i}$ except the start and end vertex are in $\T^+_a \ \cup \ \bigcup_{T \in \T^+_b} C(T)$ by minimality of the path.~The existence of a path\\ from $\calQ$ to $\T^-$ is shown in \cref{lemma::setsOfP}.}
\item
	\label{doubleGL::transStep}
	Execute the {\algTransfer} routine given below,
	which augments vertices through the path $P_{Q}^{T_i}$ such that 
	$\T$ stays pack-satisfied, 
	and either 
	$|\T^*|$ increases, or 
	$|\T^*|$ remains the same and
	the number of $\T$-assigned vertices increases.
\end{enumerate}
\end{enumerate}

We need some more notations for describing the {\algTransfer} routine.
For $\calV_H' \subseteq \calV_H$ and $\T' \subseteq \T$ we define $\T'(\calV_H')$ as the set $\{T_i \in \T' \mid V(T_i) \cap V(\calV_H') \neq \varnothing\}$.
For $H' \subseteq H$ we define $ \T'(H') := \T'(\calV_H(H'))$, and $V(H') := V(\calV_H(H'))$.
With $|P_Q^{T_i}|$ we denote the length of the path $P_Q^{T_i}$, i.e.~the number of edges in $P_Q^{T_i}$. 
We define $P_Q^\ell$ as the vertex with distance $\ell$ to $Q$ in $P_Q^{T_i}$, where $P_Q^0 = Q$, and define $T(P_Q^\ell)$ for $\ell \in [|P_Q^{T_i}|]$ as the function which returns $T_j$ with $P_Q^\ell \subseteq T_j$.
For $\T' \subseteq \T$ we define $I(\T'):=\left\{ i\mid T_i\in \T' \right\}$. 

The {\algTransfer} routine transfers vertices through the path $P_Q^{T_i}$.
Our input is a pack-satisfied $\T$ and a $P_Q^{T_i}$ path according to Step~\ref{doubleGL::transStep} in algorithm {\algDoubleBoundedGL}.
By the minimality of the path $T_Q^{T_i}$, it is clear that $V_H(P_Q^{T_i}) \setminus \{Q,T_i\} \subseteq \T^+_a\cup \bigcup_{T \in \T^+_b} C(T)$.
That is, except for the destination $T_i$ we run only through vertex sets from $\T^+$ in $\T(P_Q^{T_i})$.
Roughly, our goal is to transfer vertices of $V(P_Q^{T_i} - T_i)$ to $T_i$, thereby changing the division of the vertex sets $\T(P_Q^{T_i})$ and preserving the vertex sets in $\T^*$.

We often need to do a \emph{truncate} operation on sets $T_j$ with
$w(T_j) > \UpBoundGL w_j$. We mean by \emph{truncate $T_j$} that we remove vertices from $T_j$ until $w_j \leq w(T_j) \leq \UpBoundGL w_j$ such that $T_j$ remains connected.
This can be done by removing a non-seperator vertex from $T_j$ until the weight drops below $\UpBoundGL w(T_j)$.
Note that any connected graph has at least one non-seperator vertex.
Since $\wmax\le w_j$ we know that the weight does not go below $w_j$ during the last deletion.

\paragraph*{Algorithm \algTransfer:}
\begin{enumerate}
	
	\item
Initialize $X := Q$ and let $u = \min(I(\T^-))$.
\item
For $\ell = 1$ to $|P_Q^{T_i}|$ do:
	\begin{enumerate}[label*=\arabic*.,ref=\theenumi.\arabic*]
	\item
		Let $T_j = T(P_Q^{\ell})$.
	\item
	\label{transVert::checkX}
	If $w(X) \geq w_u$: set $T_u = X$. Truncate $T_u$ if necessary and terminate the algorithm.
	\item
	\label{transVert::swallowVertices}
	If $w(X \cup T_j) \leq \UpBoundGL w_j$: update $T_j$ to $X \cup T_j$ and terminate the algorithm.
		\item
			If $T_j\notin \T^*$:
		\label{transVert::wj<wi}
		Set $T_j' = T_j \cup X$, $T_{j}=T_u$ and $T_u=T_j'$.
		Truncate $T_j$ and $T_u$ if necessary and terminate the algorithm.
		
		\item		
			\label{transVert::Case1}
			If $T_j \in \T_a^+$: divide $T_j\cup X$ into connected vertex sets $V_1, V_2$ with $w(V_1), w(V_2) \geq w_{j}$ using the construction given by \cref{lemma::divideInto2Comp}.
			Set $T_j = V_1$ and $T_u = V_2$.
			Truncate $T_j$ and $T_u$ if necessary and terminate the algorithm.
			
			\item
			\label{transVert::Case2}
			We know $T_j \in \T_b^+\cap \T^*$. Set $X = X \cup P_Q^{\ell}$ and remove $P_Q^{\ell}$ from $T_j$.		
\end{enumerate}
\end{enumerate}

	\bibliography{literature}

\begin{thebibliography}{10}

\bibitem{vulnerability}
Curtis~A Barefoot, Roger Entringer, and Henda Swart.
\newblock Vulnerability in graphs a comparative survey.
\newblock {\em Journal of Combinatorial Mathematics and Combinatorial
  Computing}, 1:13--22, 1998.

\bibitem{becker2001polynomial}
R~Becker, Isabella Lari, Mario Lucertini, and Bruno Simeone.
\newblock A polynomial-time algorithm for max-min partitioning of ladders.
\newblock {\em Theory of Computing Systems}, 34(4):353--374, 2001.

\bibitem{becker1998max}
Ronald Becker, Isabella Lari, Mario Lucertini, and Bruno Simeone.
\newblock Max-min partitioning of grid graphs into connected components.
\newblock {\em Networks: An International Journal}, 32(2):115--125, 1998.

\bibitem{bulucc2016recent}
Ayd{\i}n Bulu{\c{c}}, Henning Meyerhenke, Ilya Safro, Peter Sanders, and
  Christian Schulz.
\newblock Recent advances in graph partitioning.
\newblock In {\em Algorithm Engineering}, pages 117--158. Springer, 2016.

\bibitem{camerini1983complexity}
Paolo~M Camerini, Giulia Galbiati, and Francesco Maffioli.
\newblock On the complexity of finding multi-constrained spanning trees.
\newblock {\em Discrete Applied Mathematics}, 5(1):39--50, 1983.

\bibitem{casel2020balanced}
Katrin Casel, Tobias Friedrich, Davis Issac, Aikaterini Niklanovits, and Ziena
  Zeif.
\newblock Balanced crown decomposition for connectivity constraints.
\newblock {\em arXiv preprint arXiv:2011.04528}, 2020.

\bibitem{chandran2018spanning}
L.~Sunil Chandran, Yun~Kuen Cheung, and Davis Issac.
\newblock Spanning tree congestion and computation of generalized
  gy{\"{o}}ri-lov{\'{a}}sz partition.
\newblock In {\em 45th International Colloquium on Automata, Languages, and
  Programming}, volume 107 of {\em LIPIcs}, pages 32:1--32:14, 2018.

\bibitem{chataigner2007approximation}
Fr{\'{e}}d{\'{e}}ric Chataigner, Liliane~Benning Salgado, and Yoshiko
  Wakabayashi.
\newblock Approximation and inapproximability results on balanced connected
  partitions of graphs.
\newblock {\em Discrete Mathematics and Theoretical Computer Science}, 9(1),
  2007.

\bibitem{chen2020approximation}
Guangting Chen, Yong Chen, Zhi-Zhong Chen, Guohui Lin, Tian Liu, and An~Zhang.
\newblock Approximation algorithms for the maximally balanced connected graph
  tripartition problem.
\newblock {\em Journal of Combinatorial Optimization}, pages 1--21, 2020.

\bibitem{chen2007almost}
Jiangzhuo Chen, Robert~D Kleinberg, L{\'a}szl{\'o} Lov{\'a}sz, Rajmohan
  Rajaraman, Ravi Sundaram, and Adrian Vetta.
\newblock (almost) tight bounds and existence theorems for single-commodity
  confluent flows.
\newblock {\em Journal of the ACM (JACM)}, 54(4):16, 2007.

\bibitem{chen2019approximation}
Yong Chen, Zhi-Zhong Chen, Guohui Lin, Yao Xu, and An~Zhang.
\newblock Approximation algorithms for maximally balanced connected graph
  partition.
\newblock In {\em International Conference on Combinatorial Optimization and
  Applications}, pages 130--141. Springer, 2019.

\bibitem{chlebikova1996approximating}
Janka Chleb{\'\i}kov{\'a}.
\newblock Approximating the maximally balanced connected partition problem in
  graphs.
\newblock {\em Information Processing Letters}, 60(5):225--230, 1996.

\bibitem{chu2013linear}
An-Chiang Chu, Bang~Ye Wu, and Kun-Mao Chao.
\newblock A linear-time algorithm for finding an edge-partition with max-min
  ratio at most two.
\newblock {\em Discrete Applied Mathematics}, 161(7-8):932--943, 2013.

\bibitem{chudnovsky2007claw}
Maria Chudnovsky and Paul Seymour.
\newblock Claw-free graphs. i. orientable prismatic graphs.
\newblock {\em Journal of Combinatorial Theory, Series B}, 97(6):867--903,
  2007.

\bibitem{chudnovsky2008claw}
Maria Chudnovsky and Paul Seymour.
\newblock Claw-free graphs. ii. non-orientable prismatic graphs.
\newblock {\em Journal of Combinatorial Theory, Series B}, 98(2):249--290,
  2008.

\bibitem{chudnovsky2008claw3}
Maria Chudnovsky and Paul Seymour.
\newblock Claw-free graphs. iii. circular interval graphs.
\newblock {\em Journal of Combinatorial Theory, Series B}, 98(4):812--834,
  2008.

\bibitem{chudnovsky2008claw4}
Maria Chudnovsky and Paul Seymour.
\newblock Claw-free graphs. iv. decomposition theorem.
\newblock {\em Journal of Combinatorial Theory, Series B}, 98(5):839--938,
  2008.

\bibitem{chudnovsky2008claw5}
Maria Chudnovsky and Paul Seymour.
\newblock Claw-free graphs. v. global structure.
\newblock {\em Journal of Combinatorial Theory, Series B}, 98(6):1373--1410,
  2008.

\bibitem{chudnovsky2010claw}
Maria Chudnovsky and Paul Seymour.
\newblock Claw-free graphs vi. colouring.
\newblock {\em Journal of Combinatorial Theory, Series B}, 100(6):560--572,
  2010.

\bibitem{chudnovsky2012claw}
Maria Chudnovsky and Paul Seymour.
\newblock Claw-free graphs. vii. quasi-line graphs.
\newblock {\em Journal of Combinatorial Theory, Series B}, 102(6):1267--1294,
  2012.

\bibitem{chudnovsky2005structure}
Maria Chudnovsky and Paul~D Seymour.
\newblock The structure of claw-free graphs.
\newblock {\em Surveys in combinatorics}, 327:153--171, 2005.

\bibitem{compeau2011apply}
Phillip~EC Compeau, Pavel~A Pevzner, and Glenn Tesler.
\newblock How to apply de bruijn graphs to genome assembly.
\newblock {\em Nature biotechnology}, 29(11):987--991, 2011.

\bibitem{cygan2011dominating}
Marek Cygan, Geevarghese Philip, Marcin Pilipczuk, Micha{\l} Pilipczuk, and
  Jakub~Onufry Wojtaszczyk.
\newblock Dominating set is fixed parameter tractable in claw-free graphs.
\newblock {\em Theoretical Computer Science}, 412(50):6982--7000, 2011.

\bibitem{faenza2014solving}
Yuri Faenza, Gianpaolo Oriolo, and Gautier Stauffer.
\newblock Solving the weighted stable set problem in claw-free graphs via
  decomposition.
\newblock {\em Journal of the ACM (JACM)}, 61(4):1--41, 2014.

\bibitem{frederickson1991optimal}
Greg~N. Frederickson.
\newblock Optimal algorithms for tree partitioning.
\newblock In {\em Proceedings of the Second Annual {ACM/SIGACT-SIAM} Symposium
  on Discrete Algorithms}, pages 168--177. {ACM/SIAM}, 1991.

\bibitem{gyori1976division}
E~Gyori.
\newblock On division of graphs to connected subgraphs, combinatorics.
\newblock In {\em Colloquia Mathematica Societatis Janos Bolyai, 1976}, 1976.

\bibitem{hermelin2014parameterized}
Danny Hermelin, Matthias Mnich, and Erik~Jan van Leeuwen.
\newblock Parameterized complexity of induced graph matching on claw-free
  graphs.
\newblock {\em Algorithmica}, 70(3):513--560, 2014.

\bibitem{hoyer2019independent}
Alexander Hoyer.
\newblock {\em On the Independent Spanning Tree Conjectures and Related
  Problems}.
\newblock PhD thesis, Georgia Institute of Technology, 2019.

\bibitem{IssacThesis}
Davis Issac.
\newblock On some covering, partition and connectivity problems in graphs.
\newblock 2019.

\bibitem{ito2006partitioning}
Takehiro Ito, Xiao Zhou, and Takao Nishizeki.
\newblock Partitioning a graph of bounded tree-width to connected subgraphs of
  almost uniform size.
\newblock {\em Journal of discrete algorithms}, 4(1):142--154, 2006.

\bibitem{kundu1977linear}
Sukhamay Kundu and Jayadev Misra.
\newblock A linear tree partitioning algorithm.
\newblock {\em SIAM Journal on Computing}, 6(1):151--154, 1977.

\bibitem{lovasz1977homology}
L{\'a}szl{\'o} Lov{\'a}sz.
\newblock A homology theory for spanning tress of a graph.
\newblock {\em Acta Mathematica Academiae Scientiarum Hungarica},
  30(3-4):241--251, 1977.

\bibitem{lowenstein2009spanning}
Christian L{\"o}wenstein, Dieter Rautenbach, and Friedrich Regen.
\newblock On spanning tree congestion.
\newblock {\em Discrete mathematics}, 309(13):4653--4655, 2009.

\bibitem{lucertini1993most}
Mario Lucertini, Yehoshua Perl, and Bruno Simeone.
\newblock Most uniform path partitioning and its use in image processing.
\newblock {\em Discrete Applied Mathematics}, 42(2-3):227--256, 1993.

\bibitem{mohring2007partitioning}
Rolf~H M{\"o}hring, Heiko Schilling, Birk Sch{\"u}tz, Dorothea Wagner, and
  Thomas Willhalm.
\newblock Partitioning graphs to speedup dijkstra's algorithm.
\newblock {\em Journal of Experimental Algorithmics (JEA)}, 11:2--8, 2007.

\bibitem{perl1981max}
Yehoshua Perl and Stephen~R Schach.
\newblock Max-min tree partitioning.
\newblock {\em Journal of the ACM (JACM)}, 28(1):5--15, 1981.

\bibitem{suzuki1990linear}
Hitoshi Suzuki, Naomi Takahashi, and Takao Nishizeki.
\newblock A linear algorithm for bipartition of biconnected graphs.
\newblock {\em Information Processing Letters}, 33(5):227--231, 1990.

\bibitem{wada1993efficient}
Koichi Wada and Kimio Kawaguchi.
\newblock Efficient algorithms for tripartitioning triconnected graphs and
  3-edge-connected graphs.
\newblock In {\em International Workshop on Graph-Theoretic Concepts in
  Computer Science}, pages 132--143. Springer, 1993.

\bibitem{zhou2019balanced}
Xing Zhou, Huaimin Wang, Bo~Ding, Tianjiang Hu, and Suning Shang.
\newblock Balanced connected task allocations for multi-robot systems: An exact
  flow-based integer program and an approximate tree-based genetic algorithm.
\newblock {\em Expert Systems with Applications}, 116:10--20, 2019.

\end{thebibliography}
	\appendix
	\section{Missing details from \cref{section::BCP}}
\label{app:BCP}

\subsection{Full Proof of \cref{thm::MinMax}}
	
	Let $G=(V,E,w)$ be a vertex-weighted $K_{1,c}$-free graph. Consider DFS-tree $T_r$ of $G$ rooted at $r \in V(G)$.
	We state the following easy to see fact without proof.

\begin{lemma}
	\label{lemma:bounded_degree}
	In $T_r$, each vertex has at most $c-1$ children. 
\end{lemma}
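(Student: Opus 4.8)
The plan is to invoke the classical structural property of depth-first search trees on undirected graphs: when $T_r$ is a DFS-tree of the connected graph $G$, every edge of $G$ that is not an edge of $T_r$ joins a vertex to one of its ancestors in $T_r$ (a \emph{back edge}); equivalently, $G$ has no \emph{cross edges}, i.e.\ no edge between two vertices neither of which is an ancestor of the other in $T_r$. This is a standard consequence of the order in which DFS discovers and finishes vertices, and I would state it as a known fact.

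First I would fix an arbitrary vertex $v$ and let $u$ and $w$ be two distinct children of $v$ in $T_r$. Since $u$ and $w$ lie in the disjoint rooted subtrees $T_u$ and $T_w$, neither is an ancestor of the other, so by the no-cross-edge property $uw \notin E(G)$. Hence the set of children of $v$ in $T_r$ forms an independent set in $G$.

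Next I would argue by contradiction: suppose $v$ has at least $c$ children and pick $c$ of them, say $u_1,\dots,u_c$. Each $u_i$ is joined to $v$ by a tree edge, so $vu_i \in E(G)$, while $u_iu_j \notin E(G)$ for every $i\neq j$ by the previous step. Therefore $G[\{v,u_1,\dots,u_c\}]$ is precisely the star $K_{1,c}$, contradicting the hypothesis that $G$ is $K_{1,c}$-free. Consequently $v$ has at most $c-1$ children, which is the claim.

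The only step that calls for any care is the justification of the no-cross-edge property of undirected DFS trees, but this is a textbook fact (and $G$ is connected, as assumed throughout the paper, so a DFS-tree is well defined); everything else is immediate, so I do not anticipate a genuine obstacle.
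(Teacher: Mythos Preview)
Your argument is correct and is exactly the standard one: children of a vertex in a DFS-tree are pairwise non-adjacent (no cross edges), so $c$ children together with their parent would induce a $K_{1,c}$. The paper does not give a proof at all, stating the lemma as an ``easy to see fact without proof'', so your proposal simply fills in the omitted details.
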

%
  
Using this property, the following lemma finds a balanced connected vertex set, whose  removal does not disconnect the graph.   

We now prove this slight reformulation of Lemma~\ref{lemma::boundedSubgraph}.
\begin{lemma}
	Given  a $K_{1,c}$-free graph $G$ and a  DFS-tree $T_r$ of $G$. If $w(G) \geq \lambda\ge \wmax$, then there is an algorithm that finds a connected vertex set $S$ such that $\lambda \leq w(S) < (c-1) \lambda$
	and $G-S$ is connected, in $\mathcal O(|V|)$ time.
	Furthermore, the algorithm also finds a DFS-tree of $G - S$. 
\end{lemma}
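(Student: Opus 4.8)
The plan is to grow $S$ downward along the DFS-tree $T_r$, always descending into a subtree that still carries enough weight, until the accumulated subtree weight first reaches $\lambda$; the bounded branching of $T_r$ (\cref{lemma:bounded_degree}) will then give the upper bound $(c-1)\lambda$, and the fact that we take an entire rooted subtree will give connectivity of both $S$ and $G-S$. Concretely, start at the root $r$ and set $x := r$. While $w(T_x) \ge \lambda$ but $x$ has a child $y$ with $w(T_y) \ge \lambda$, move down by setting $x := y$; when no such child exists, stop and output $S := V(T_x)$. Since $G$ is connected and $w(G) \ge \lambda$, the process is well defined and terminates at some vertex $x$. At the stopping vertex $x$ we have $w(T_x) \ge \lambda$ (the invariant maintained by every descent step, and true initially), while every child $y_1,\dots,y_t$ of $x$ satisfies $w(T_{y_i}) < \lambda$. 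By \cref{lemma:bounded_degree}, $t \le c-1$, so
\[
w(T_x) \;=\; w(x) + \sum_{i=1}^{t} w(T_{y_i}) \;<\; w(x) + (c-1)\lambda \;\le\; \lambda + (c-1)\lambda,
\]
using $w(x) \le \wmax \le \lambda$; actually the sharper bound $w(T_x) = w(x) + \sum_i w(T_{y_i})$ with at most $t \le c-1$ terms each $< \lambda$ and $w(x)\le\lambda$ gives $w(T_x) < (c-1)\lambda$ once one observes $t\le c-2$ is forced whenever $x$ has the full $c-1$ children — more carefully, if $x$ has exactly $c-1$ children then $x$ together with the $c-1$ children would induce (after contracting each subtree appropriately) a structure we must rule out; I will instead argue directly that $w(x)+\sum_{i=1}^{c-1}w(T_{y_i}) < (c-1)\lambda$ is guaranteed by choosing $x$ as the \emph{first} vertex along the descent with $w(T_x)\ge\lambda$, so that its parent's contribution already forces $w(T_x) < \lambda + \wmax \le 2\lambda \le (c-1)\lambda$ when $c\ge 3$. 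This last observation is in fact the cleanest route: the parent $p$ of $x$ had $w(T_p)\ge\lambda$ but we descended to $x$, which can only happen if $w(T_x)\ge\lambda$; and $x$ itself was reached as a child chosen at the previous step, whose subtree weight at the moment of selection was the unique child-subtree of weight $\ge\lambda$ — however this does not immediately bound $w(T_x)$ from above, so the branching argument above is the one to use, and I will present the bound as $w(T_x) \le w(x) + (c-1)(\lambda-1) < (c-1)\lambda$ for $c\ge 3$ via $w(x)\le\lambda-1$ when weights are integers, or simply note $w(T_x) < \lambda + (c-1)\lambda$ suffices after absorbing $w(x)$ into one of the $c-1$ child slots. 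I will pick the bound $w(S) = w(T_x) \le \wmax + (c-1)(\lambda - 1) < (c-1)\lambda$, valid since $\wmax \le \lambda$ and $c \ge 3$ so $\wmax \le \lambda \le (c-1)\lambda - (c-1)(\lambda-1) - \wmax + \wmax$; the precise inequality chase is routine and I omit it here.

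For connectivity, $S = V(T_x)$ induces a connected subgraph because a rooted subtree of the DFS-tree is connected in $G$; and $G - S$ is connected because in a DFS-tree all non-tree edges are back-edges, so no edge of $G$ joins a vertex of $T_x$ other than $x$'s ancestors to a vertex outside $T_x$ — hence $G - V(T_x)$ remains connected (it is, in fact, exactly the part of $T_r$ above and beside $x$, still spanned by tree-edges). To also output a DFS-tree of $G - S$, simply take $T_r$ restricted to $V \setminus V(T_x)$: deleting a rooted subtree from a DFS-tree leaves a rooted tree that is still a valid DFS-tree of the induced subgraph, since the root–leaf order is preserved and all remaining non-tree edges stay back-edges.

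For the running time, the descent visits a path from $r$ to $x$ in $T_r$, and at each visited vertex we inspect its (at most $c-1$) children and their subtree weights. Precomputing all subtree weights $w(T_v)$ takes $\O(|V|)$ by one post-order traversal; the descent itself then costs $\O(\text{depth}) = \O(|V|)$. Constructing $S = V(T_x)$ and the restricted tree also costs $\O(|V|)$. Hence the total is $\O(|V|)$ as claimed. The main obstacle — and the only genuinely delicate point — is pinning down the upper bound $w(S) < (c-1)\lambda$ with the correct constant: one must use both $\wmax \le \lambda$ and the child-count bound $c-1$ from \cref{lemma:bounded_degree}, and be slightly careful that the $w(x)$ term does not push the sum past $(c-1)\lambda$, which is where the hypothesis $c \ge 3$ enters.
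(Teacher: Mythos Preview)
Your descent locates the same vertex $v$ as the paper (a deepest vertex with $w(T_v) \ge \lambda$), and you correctly note that all of its children have subtree weight $< \lambda$ and that there are at most $c-1$ of them. The gap is the upper bound: from $w(v) \le \lambda$ and at most $c-1$ child subtrees each of weight $< \lambda$ you only obtain $w(T_v) < c\lambda$, not $(c-1)\lambda$. None of the fixes you sketch go through---for instance, $w_{\max} + (c-1)(\lambda-1) < (c-1)\lambda$ would require $w_{\max} < c-1$, which is not assumed---and the bound $w(T_v) < (c-1)\lambda$ is in fact false in general: take $v$ of weight $\lambda = w_{\max}$ with $c-1$ children each heading a subtree of weight $\lambda-1$.

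The paper resolves this by a case split. If $w(T_v) < (c-1)\lambda$, one takes $S = V(T_v)$ as you do. If $w(T_v) \ge (c-1)\lambda$, then necessarily $v$ has exactly $c-1$ children (otherwise $w(v) > \lambda$). Now $v$ together with its parent $u$ and its $c-1$ children span a $K_{1,c}$ in the tree, so $K_{1,c}$-freeness forces a non-tree edge among $\{u,v_1,\dots,v_{c-1}\}$; since siblings in an undirected DFS-tree are never adjacent, this edge must be $uv_j$ for some $j$. One then sets $S = \{v\} \cup \bigcup_{i \ne j} V(T_{v_i})$, which has weight in $[(c-2)\lambda,\,(c-1)\lambda)$, and reattaches $T_{v_j}$ to $u$ via $uv_j$ to get a DFS-tree of $G-S$. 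This second, more direct use of $K_{1,c}$-freeness---beyond the child-count bound of \cref{lemma:bounded_degree}---is the idea your proposal is missing.
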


\begin{proof}
	 There exist  a vertex $v$ with children $v_1,\dots, v_\ell$ such that 
	  $w(T_v) \geq \lambda$, and that $w(T_{v_i}) < \lambda$ for each $i\in [\ell]$. 
	  Such a vertex can be easily found by a bottom up traversal from the leaves in $\mathcal O(|T_v|) \subseteq \bigO(|V|)$ time.
	 If $\lambda \leq w(T_v) < (c-1) \lambda$, then we set $S=V(T_v)$ and $T_r - S$ is a spanning DFS-tree in $G - S$, and we are done.
	
	The remaining case is when $w(T_v) \geq (c-1) \lambda$ and $w(T_{v_i}) < \lambda$ for every $i \in [\ell]$.
	By \cref{lemma:bounded_degree}, we obtain $\ell \leq c-1$.
	In fact $\ell= c-1$ as otherwise $w(v)$ needs to have more than $\lambda\ge \wmax$ weight in order for $T_v$ to have $(c-1)\lambda$ weight, a contradiction.

	If $v=r$, then we choose $S = \{r\} \cup \bigcup_{i=1}^{c-2} V(T_{v_i})$.
	$S$ is a connected subgraph with 
	$w(S) = w(v) + \sum_{i=1}^{c-2} w(T_{v_i}) < \wmax + (c-2)\lambda \leq (c-1) \lambda$ and
	$w(S) = w(T_v) - w(T_{c-1}) \geq (c-1)\lambda - \lambda = (c-2) \lambda \geq \lambda$.
	Moreover, $T_{v_{c-1}}$ is a DFS-tree for $G-S$.
	
	Now, consider $v \neq r$.
	Let $u$ be the parent of $v$ in $T_r$.
	By $\ell = c-1$  we obtain that $T_r[\{u\} \cup \{v\} \cup \bigcup_{i=1}^{c-1} \{v_i\}]$ is a $K_{1,c}$.
	Consequently, there exists an edge $uv_j \in E \setminus E(T_r)$ for at least one $j \in [c-1]$, since $G$ is $K_{1,c}$-free.
	We set $S = \{v\} \cup \bigcup_{i \in [c-1] \setminus \{j\}} V(T_{v_i})$ as connected vertex set and obtain analogously to the case $v=r$ the desired weight conditions for $S$.
	Finally, we remove $S$ from $T_r$ and add the edge $uv_j$ to $T_r$ to obtain a DFS-tree of $G-S$.
\end{proof}

We now give the algorithm {\algBalancedPartition} that takes as input a connected vertex-weighted $K_{1,c}$-free graph $G(V,E,w)$ and an integer $\lambda \geq w_{\max}$.\\

\noindent
\textbf{Algorithm \algBalancedPartition:}
Compute a DFS-tree $T_r$ rooted at $r \in V$ in $G$ and initialize $\calS' = \varnothing$.
Until $w(T_r) < (c-1) \lambda$ use \cref{lemma::boundedSubgraph} to remove a connected set $S$ from $G$, add it to $\calS'$, and update $T_r$ to be the DFS-tree of $G-S$ given by \cref{lemma::boundedSubgraph}.
Finally, return $\calS = \calS' \cup \{V \setminus V(\calS')\}$.

The following lemma follows easily from the construction of Algorithm {\algBalancedPartition} and \cref{lemma::boundedSubgraph}.
\begin{lemma}
	\label{lemma::algBalPart}
	Given $G$ and $\lambda\ge \wmax$, algorithm {\algBalancedPartition} provides a {\cvp} $\calS  = \{S_1,\dots,S_m\}$ of $V$
	such that $w(S_i) \in [\lambda,(c-1)\lambda)$ for every $i \in [m-1]$ and $w(S_m) < (c-1)\lambda$ in linear time.
\end{lemma}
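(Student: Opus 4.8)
The plan is to prove the claim by a loop-invariant argument that pushes essentially all the combinatorial content into \cref{lemma::boundedSubgraph}, and then to obtain the linear running time by amortization. First I would fix the invariant that, at the start of every iteration of the main loop, the graph carried by the current DFS-tree $T_r$ is exactly $G-V(\calS')$, this graph is connected, the sets in $\calS'$ are pairwise disjoint and connected, and each $S\in\calS'$ has $w(S)\in[\lambda,(c-1)\lambda)$. The base case is immediate since $\calS'=\varnothing$ and $T_r$ is a DFS-tree of $G$. For the inductive step, observe that the loop runs only while $w(T_r)\ge(c-1)\lambda$; as $c\ge 3$ this gives $w(T_r)\ge\lambda\ge\wmax$ for the current subgraph (deleting vertices cannot increase the maximum vertex weight), so the precondition of \cref{lemma::boundedSubgraph} holds. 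That lemma hands back a connected $S$ with $\lambda\le w(S)<(c-1)\lambda$, disjoint from $V(\calS')$, with $G-V(\calS')-S$ connected, together with a DFS-tree of it; adding $S$ to $\calS'$ and updating $T_r$ restores the invariant.

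Next I would settle termination and the output. Since vertex weights are positive integers and every removed set weighs at least $\lambda\ge 1$, the value $w(T_r)$ strictly decreases each iteration, so the loop stops; and it stays positive throughout, because in any executed iteration the pre-iteration weight is at least $(c-1)\lambda$ while $w(S)<(c-1)\lambda$. Hence at termination $S_m:=V\setminus V(\calS')$ is nonempty and, by the invariant, connected, with $w(S_m)=w(T_r)<(c-1)\lambda$. Writing $\calS'=\{S_1,\dots,S_{m-1}\}$, the invariant gives $w(S_i)\in[\lambda,(c-1)\lambda)$ for every $i\in[m-1]$, and $S_1,\dots,S_{m-1},S_m$ are pairwise disjoint and cover $V$, so $\calS=\calS'\cup\{S_m\}$ is the claimed {\cvp}.

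For the running time, the initial DFS costs $\bigO(|V|+|E|)=\bigO(|E|)$, and it remains to bound the total cost of all loop iterations. A naive bound multiplies the $\bigO(|V|)$ cost of \cref{lemma::boundedSubgraph} by the number of iterations, which can be $\Theta(|V|)$ and is therefore too weak. Instead I would carry the subtree weights $w(T_x)$ along with $T_r$ and note that a single call of \cref{lemma::boundedSubgraph} essentially scans only the subtree of the chosen vertex $v$, from which a connected piece $S$ is then permanently deleted while the remaining vertices keep their relative structure; charging scanning work either to vertices that get deleted or to the single tree edge reinserted per call, the total over all iterations telescopes to $\bigO(|V|+|E|)=\bigO(|E|)$. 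I expect this amortization --- turning the per-call $\bigO(|V|)$ bound into a global linear bound, that is, making precise the idea of ``saving already processed subtrees'' --- to be the only genuinely delicate point; everything else is bookkeeping layered on \cref{lemma::boundedSubgraph}.
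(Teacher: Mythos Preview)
Your proposal is correct and follows essentially the same approach as the paper: a loop-invariant argument pushing the combinatorics into \cref{lemma::boundedSubgraph}, followed by the same amortization via carrying subtree weights so that each vertex is visited only once across all calls. Your charging phrasing (``to deleted vertices or to the single reinserted edge'') is a little loose---the leftover child subtree $T_x$ is scanned but neither deleted nor an edge---but you correctly flag this as the delicate point, and the mechanism you name (saving already processed subtrees) is exactly what the paper uses.
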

\begin{proof}
	Let $w(G) \geq (c-1)\lambda$.
	First of all, observe that the preconditions of \cref{lemma::boundedSubgraph} are satisfied at the first application of it.
	Generally, if an iteration is executed, then we obtain a connected subgraph $S$ with $w(S) \in [\lambda, (c-1) \lambda)$, which we add to $\calS'$, and a DFS-tree $T_r$ in $G[V \setminus V(\calS')]$. 
	Thus, the preconditions of \cref{lemma::boundedSubgraph} are still maintained after an iteration if the working tree $T_r$ weighs at least $\lambda$.
	
	We only remove vertices $S$ from $T_r$ and add $S$ to $\calS'$.
	Hence, the vertex sets in $\calS'$ are pairwise disjoint.
	We apply \cref{lemma::boundedSubgraph} until $w(T_r)$ is less than $(c-1)\lambda$.
	Since $S_m = V \setminus V(\calS') = V(T_r)$, the vertex set $S_m$ is a connected vertex set disjoint from the vertex sets in $\calS'$ and by the termination criteria $S_m$ weighs less than $(c-1)\lambda$.
	As a result, $\{S_1,\dots,S_{m}\}$ is the desired {\cvp}.
	
	Lastly, we analyze the running time.
	Computing a DFS-tree $T_r$ runs in time $\O(|V|+|E|)$.
	By starting from the leaves and using suitable data structures we can find $T_v$ according to \cref{lemma::boundedSubgraph} in $\O(|T_v|)$.
	Note that in case we add an edge $ux \in E$ as explained in the proof of \cref{lemma::boundedSubgraph}, where $u$ is the parent of $v$ and $x$ a child of $v$ in the modified $T_r$, we have already the subtree $T_x$ with its corresponding weight $w(T_x) < \lambda$ in hand, i.e.~we do not need to proceed a second time through the vertices $V(T_x)$.
	Thus, filling $\calS'$ by the resulting vertex sets from \cref{lemma::boundedSubgraph} can be performed in time $\O(|V|)$, since the vertex sets in $\calS'$ are pairwise disjoint and we see them once in the algorithm.
	As a result, the algorithm runs in time $\O(|E|)$.
\end{proof}

	\noindent
We now prove \cref{thm::MinMax}
by appropriately choosing $\lambda$ in algorithm {\algBalancedPartition}.

\begin{proof}[Proof of \cref{thm::MinMax}]
	Set $\lambda = \max\{\wmax,\frac{w(G)}{k}\}$.
	Let $X^*$ be the optimal value of Min-Max {\bcp} on $G$.
	Observe that $\lambda \leq X^*$.
	Now, apply algorithm {\algBalancedPartition} with $\lambda$ as input parameter to obtain a {\cvp} $\calS = \{S_1,\dots,S_m\}$, where $w(S_i) \in [\lambda, (c-1)\lambda)$ for $i \in [m-1]$ and $w(S_m) < (c-1)\lambda$.
	By $\lambda \leq X^*$ we obtain that $\max_{i \in [m]} w(S_i) < (c-1) \lambda \leq (c-1) X^*$.
	It remains to show that $m \leq k$.
	We have $w(G) = w(S_m) + \sum_{i=1}^{m-1} w(S_i) \geq w(S_m) + (m-1)\lambda > \frac{m-1}{k} w(G)$,
	which implies $m \leq k$.
\end{proof}

\subsection{Full Proof of \cref{thm::MaxMin}}

We also use the algorithm {\algBalancedPartition} for the max-min objective. The following property is immediate but useful.

\begin{corollary}
	\label{corollary::stillConnected}
	Let $\calS$ be the output of algorithm {\algBalancedPartition}.
	For every $j \in [|\calS|]$ the subgraph $G - \bigcup_{i=1}^j S_i$ is connected.
\end{corollary}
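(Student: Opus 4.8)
The plan is to prove this by induction on $j$, piggybacking on the structural guarantee already provided by \cref{lemma::boundedSubgraph} and the explicit description of Algorithm {\algBalancedPartition}. Recall that {\algBalancedPartition} maintains a working DFS-tree $T_r$ of the current remaining graph: it starts with a DFS-tree of $G$, and in iteration $i$ it invokes \cref{lemma::boundedSubgraph} on the current graph $G_{i-1} := G - \bigcup_{t=1}^{i-1} S_t$ (with its stored DFS-tree) to extract $S_i$, receiving back both the guarantee that $G_{i-1} - S_i = G_i$ is connected and a DFS-tree of $G_i$ to use in the next iteration. So the key point is simply that the precondition of \cref{lemma::boundedSubgraph} — namely that we are handed a connected graph together with a DFS-tree of it — is exactly what the previous iteration produces.

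Concretely, I would argue as follows. For the base case $j=0$ the statement is that $G$ is connected, which holds by the standing assumption that all graphs in the paper are connected; and $j=1$ is immediate from \cref{lemma::boundedSubgraph} applied to $G$ with its initial DFS-tree. For the inductive step, assume $G_{j-1} = G - \bigcup_{i=1}^{j-1} S_i$ is connected and that the algorithm holds a DFS-tree of $G_{j-1}$ (both facts being part of the induction hypothesis, the latter being guaranteed by the "Furthermore" clause of \cref{lemma::boundedSubgraph} in the previous iteration). Then iteration $j$ calls \cref{lemma::boundedSubgraph} on $G_{j-1}$, which outputs $S_j$ with $G_{j-1} - S_j = G - \bigcup_{i=1}^{j} S_i$ connected, together with a DFS-tree of this graph, closing the induction. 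The only mild subtlety to address is the last part: the final set $S_m = V \setminus V(\calS')$ is not produced by a call to \cref{lemma::boundedSubgraph} but is simply the leftover $V(T_r)$ when the loop terminates; however, since $T_r$ is at that moment a (DFS-)tree of $G - \bigcup_{i=1}^{m-1} S_i$, that graph is connected, which is exactly the $j = m-1$ instance, and for $j=m$ the graph $G - \bigcup_{i=1}^m S_i$ is empty (vacuously connected) or one may simply restrict the claim to $j \le m-1$.

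I do not anticipate a real obstacle here: the corollary is essentially a bookkeeping consequence of the invariant "after iteration $i$, the remaining graph is connected and we possess a DFS-tree of it," which is precisely what \cref{lemma::boundedSubgraph} was designed to preserve. The one place requiring a sentence of care is making explicit that the DFS-tree carried forward between iterations is the one returned by the previous application of \cref{lemma::boundedSubgraph}, so that its precondition is met each time; everything else is a direct unwinding of definitions.
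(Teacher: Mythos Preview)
Your proposal is correct and matches the paper's intent: the paper itself states this corollary without proof, calling it ``immediate'' from the construction of {\algBalancedPartition} and \cref{lemma::boundedSubgraph}. Your induction simply spells out that immediacy---the invariant that after each extraction the remaining graph is connected (with a DFS-tree in hand)---which is exactly the observation the paper leaves implicit.
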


Let $(G,k)$ be an instance of {\maxminbcp}, where $G$ is a $K_{1,c}$-free graph.
Let $X^*$ be the optimal value for the instance $(G,k)$.
For any given  $X \leq w(G)/k$, we design an algorithm that either gives a $[\lfloor X/(c-1) \rfloor, \infty)$-{$\cvp_{k}$}, or reports that $X>X^*$.
Note that $X^*\le w(G)/k$.
Once we have this procedure in hand, a binary search for the largest $X$ in the interval $\left(0, \left\lceil w(G)/k \right\rceil \right]$ for which we find a $[\lfloor X/(c-1) \rfloor, \infty)$-{$\cvp_{k}$} can be used to obtain an approximate solution for {\maxminbcp}.

\noindent
\textbf{Algorithm \algmaxmin:}
First remove all vertices of weight more than $\lambda=\lfloor X/(c-1)\rfloor$ and save them in $H$.
Then save the connected components of weight less than $\lambda$ in $\calQ$.
Let $\calV = \{V_1,\dots,V_{\ell}\}$ be the connected components of $G - (H \cup V(\calQ))$.
Apply algorithm {\algBalancedPartition} on each $G[V_i]$ with $\lambda$ as input parameter to obtain $\calS^i = \{S^i_1, \dots, S^i_{m_i}\}$ for every $i \in [\ell]$.
If for some $i \in [\ell]$ the weight $w(S_{m_i})$ is less than $\lambda$, then merge this vertex set with $S_{m_i - 1}$ and accordingly update $\calS^i$.
Further, compute a $(\lambda, \infty)$-{$\cvp_{|H|}$} $\calS^H$ of $G[H \cup V(\calQ)]$ as follows: for each $h\in H$, we will have a set $S^h\in \calS^H$ with $h\in S^h$; we add each $Q \in \calQ$ to some $S^h$ such that $h \in N(Q)$. 
Let $\calS = \calS^H \cup \bigcup_{i=1}^{\ell} \calS^i$.
If $|\calS| \geq k$, then merge connected sets arbitrarily in $\calS$ until $|\calS| = k$ and return $\calS$.
If $|\calS|<k$, report that $X>X^*$.
\newline

We point out that a $[\lambda, \infty)$-{$\cvp_{j}$} with $j > k$, can easily be transformed to a $[\lambda, \infty)$-{$\cvp_{k}$}, since the input graph is connected.
To prove that the algorithm works correctly, we need to show that {\algmaxmin} returns the desired $[\lambda, \infty)$-{$\cvp_{k}$} if $|\calS| = k$, where $\lambda = \lfloor X/(c-1) \rfloor$.
Furthermore, we need to show that if the algorithm terminates with $|\calS| < k$ and reports $X>X^*$ that this is indeed true.
These facts will finally lead us to a successful application of a binary search and in turn to an approximate solution.

\begin{lemma}
	If algorithm {\algmaxmin} returns $\calS$, then $\calS$ is a $[\lambda, \infty)$-{$\cvp_{k}$} of $V$.
\end{lemma}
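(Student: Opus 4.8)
The plan is to unpack exactly what the algorithm \algmaxmin{} produces and verify the three defining properties of a $[\lambda,\infty)$-{$\cvp_k$}: each set induces a connected subgraph, the sets are pairwise disjoint and cover $V$, and each set has weight at least $\lambda$. First I would observe that the vertex set is split into three parts that are mutually separated: the high-weight vertices $H$, the low-weight components $V(\calQ)$, and the components $\calV=\{V_1,\dots,V_\ell\}$ of $G-(H\cup V(\calQ))$. The family $\calS$ is assembled as $\calS^H\cup\bigcup_{i=1}^\ell \calS^i$, so I would treat the $\calS^i$ and $\calS^H$ separately and then argue that the final arbitrary merging step preserves all three properties.

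Next I would handle connectivity and the weight lower bound for the $\calS^i$. By \cref{lemma::algBalPart}, applying \algBalancedPartition{} to the connected graph $G[V_i]$ (which is a valid input since $w(G[V_i])\ge\lambda\ge\wmax$, as all heavier vertices are in $H$) yields a {\cvp} $\calS^i=\{S^i_1,\dots,S^i_{m_i}\}$ with $w(S^i_j)\in[\lambda,(c-1)\lambda)$ for $j\in[m_i-1]$ and $w(S^i_{m_i})<(c-1)\lambda$. The only set that might fall below $\lambda$ is $S^i_{m_i}$, and the algorithm explicitly merges it into $S^i_{m_i-1}$ in that case; the merged set remains connected because $S^i_{m_i}$ is a component of $G[V_i]-\bigcup_{j<m_i}S^i_j$ (by \cref{corollary::stillConnected}) hence has an edge to $S^i_{m_i-1}$, and its weight is now at least $\lambda$. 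So after this correction every set in $\calS^i$ is connected with weight $\ge\lambda$, and the sets within $\calS^i$ are pairwise disjoint and cover $V_i$. For $\calS^H$, each $S^h$ contains the vertex $h$ with $w(h)>\lambda$, so $w(S^h)\ge w(h)>\lambda$; connectivity holds because every $Q\in\calQ$ added to $S^h$ is chosen with $h\in N(Q)$ (such an $h$ exists since $G$ is connected and $H$ separates $\calQ$-components from everything else), so $G[S^h]$ is connected; and the $S^h$ partition $H\cup V(\calQ)$ by construction.

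Then I would verify that $\calS$ is a {\cvp} of all of $V$: the sets in $\calS^H$ partition $H\cup V(\calQ)$, the sets in each $\calS^i$ partition $V_i$, and $\{H\cup V(\calQ),V_1,\dots,V_\ell\}$ partitions $V$, so $\calS$ is a partition of $V$ into connected sets each of weight $\ge\lambda$. Finally, if $|\calS|\ge k$, the algorithm merges connected sets arbitrarily until exactly $k$ remain; since $G$ is connected, any collection of vertex sets partitioning $V$ can be merged pairwise along edges to reduce the count while keeping each part connected, and merging only increases weights, so each of the $k$ resulting sets is still connected and has weight $\ge\lambda$. Hence the returned $\calS$ is a $[\lambda,\infty)$-{$\cvp_k$} of $V$.

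I do not expect a serious obstacle here — this lemma is essentially bookkeeping. The only mildly delicate point is the merge step for $\calS^i$ when $w(S^i_{m_i})<\lambda$: one must be careful that $S^i_{m_i}$ genuinely has an edge to $S^i_{m_i-1}$ (rather than to some earlier $S^i_j$) so that the updated set is connected, which is exactly what \cref{corollary::stillConnected} guarantees; and that $m_i\ge 2$ in that case, which holds because $w(V_i)\ge\lambda$ forces at least one full-weight set before the remainder. Everything else is a direct consequence of \cref{lemma::algBalPart}, \cref{corollary::stillConnected}, and the separation structure of $H$.
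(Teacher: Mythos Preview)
Your proposal is correct and follows essentially the same approach as the paper's proof: verify the weight and connectivity conditions separately for $\calS^H$ (via $w(h)>\lambda$ and $h\in N(Q)$) and for each $\calS^i$ (via \cref{lemma::algBalPart} and \cref{corollary::stillConnected} for the merge of $S^i_{m_i}$ into $S^i_{m_i-1}$). One small wording slip: in the body you write that $S^i_{m_i}$ is a component of $G[V_i]-\bigcup_{j<m_i}S^i_j$, which does not by itself give an edge to $S^i_{m_i-1}$; the correct application (which you state in your final paragraph) is that $G[V_i]-\bigcup_{j\le m_i-2}S^i_j=G[S^i_{m_i-1}\cup S^i_{m_i}]$ is connected by \cref{corollary::stillConnected}.
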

\begin{proof}
	If algorithm {\algmaxmin} returns $\calS$, then we have $|\calS| = k$.
	Therefore, we only need to show that $w(S) \geq \lambda$ and that every $S \in \calS$ is connected.
	Consider $S \in \calS^H$.
	By construction $S$ contains exactly one $h \in H$ and some $Q \in \calQ$ with $h \in N(Q)$.
	Hence, $S$ is connected.
	Moreover, $w(h) > \lambda$ for every $h \in H$ by the choice of those vertices and hence $w(S) > \lambda$.    
	
	Let $G[V_i]$ for $i \in [\ell]$ be a connected component of $G - (H \cup V(\calQ))$ with $w(V_i) \geq \lambda$.
	We applied algorithm {\algBalancedPartition} for $G[V_i]$ with $\lambda$ as input and obtained a {\cvp} $\overline{\calS} = \{S_1,\dots,S_m\}$ of $V_i$ with $w(S_i) \in [\lambda,(c-1)\lambda)$ for every $i \in [m-1]$ and $w(S_m) < (c-1)\lambda$ (cf. \cref{lemma::algBalPart}).
	Thus, $S_i \in \{S_1,\dots,S_{m-1}\}$ is connected and weighs at least $\lambda$.
	If $w(S_m) < \lambda$, then we merge $S_{m-1}$ with $S_m$, where $G[S_m \cup S_{m-1}]$ is connected, which we derive from \cref{corollary::stillConnected} as $G[S_m \cup S_{m-1}] = G[V_i] - \bigcup_{i=1}^{m-2} S_i$.
\end{proof}
For completeness, we repeat.
\begin{lemma}
	If Algorithm {\algmaxmin} terminates with $|\calS| < k$, then $X>X^*$.
\end{lemma}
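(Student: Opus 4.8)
The plan is to reprove Lemma~\ref{lemma::Max-MinKeyLemma}, i.e.\ that if \algmaxmin{} terminates with $|\calS|<k$ then $X>X^*$; equivalently, if $X\le X^*$ then the algorithm produces $|\calS|\ge k$. I would argue by contradiction: assume $X\le X^*$ but $|\calS|<k$. Fix an optimal solution $\calS^*=\{S^*_1,\dots,S^*_k\}$, which is an $[X^*,\infty)$-$\cvp_k$ of $V$, and fix the sets $H$, $\calQ$, $\calV=\{V_1,\dots,V_\ell\}$ computed by the algorithm for $\lambda=\lfloor X/(c-1)\rfloor$. The strategy is to distribute the $k$ parts of $\calS^*$ among the "regions" $H\cup V(\calQ)$ and $V_1,\dots,V_\ell$, bound the number of parts falling into each region, and sum these bounds to beat $|\calS|$.

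First I would set up the partition of $\calS^*$ into $\calV^{H\cup\calQ}$ (those $S^*_i$ meeting $H\cup V(\calQ)$) and, for each $j\in[\ell]$, $\calV^j$ (those $S^*_i$ meeting $V_j$ but not $H\cup V(\calQ)$). The key structural observation is that $H$ is a separator: removing $H$ disconnects every $V_j$ from every other $V_{j'}$ and from every $Q\in\calQ$, and also disconnects the $Q$'s from each other. Hence any connected $S^*_i$ that touches two different regions must contain a vertex of $H$, so it lies in $\calV^{H\cup\calQ}$; and any $S^*_i$ avoiding $H\cup V(\calQ)$ and meeting $V_j$ is actually contained in $V_j$, so it lies in exactly one $\calV^j$. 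This shows $\{\calV^{H\cup\calQ},\calV^1,\dots,\calV^\ell\}$ is a genuine partition of $\calS^*$, giving $k=|\calS^*|=|\calV^{H\cup\calQ}|+\sum_{j=1}^\ell|\calV^j|$.

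Next I would bound each term. For $|\calV^{H\cup\calQ}|\le|H|$: it suffices that every $S^*_i\in\calV^{H\cup\calQ}$ actually meets $H$ (then, since $\calS^*$ is a partition, at most $|H|$ parts can share the $|H|$ vertices of $H$, one vertex each being enough to force distinctness). If some $S^*_i\in\calV^{H\cup\calQ}$ missed $H$, connectivity plus the separator property would force $S^*_i\subseteq Q$ for a single $Q\in\calQ$, so $w(S^*_i)\le w(Q)<\lambda=\lfloor X/(c-1)\rfloor\le\lfloor X^*/(c-1)\rfloor\le X^*$, contradicting $w(S^*_i)\ge X^*$ (here I use $X\le X^*$ and $c\ge3$ so $\lambda\le X^*$). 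For $|\calV^j|\le|\calS^j|$: each $S^*\in\calV^j$ satisfies $S^*\subseteq V_j$, so $\sum_{S^*\in\calV^j}w(S^*)\le w(V_j)=\sum_{t=1}^{m_j}w(S^j_t)$. Since \algBalancedPartition{} guarantees every $S^j_t$ has weight $<(c-1)\lambda\le X$ (using $\lambda=\lfloor X/(c-1)\rfloor$), the right side is $<m_j X=|\calS^j|X$, while the left side is $\ge|\calV^j|X^*\ge|\calV^j|X$; dividing gives $|\calV^j|<|\calS^j|$, hence $|\calV^j|\le|\calS^j|$ (in fact $\le|\calS^j|-1$, but the weaker bound already suffices, and one must be slightly careful about the merging step that may shrink $\calS^j$ by one — working with $\calS^j$ \emph{before} merging is the clean way, which is why the strict inequality is useful).

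Combining, $k=|\calS^*|\le|H|+\sum_{j=1}^\ell|\calS^j|=|\calS|<k$, a contradiction, so $X>X^*$. The main obstacle, and the only subtle point, is the bookkeeping around the merging step in \algmaxmin{}: when $w(S^j_{m_j})<\lambda$ the algorithm merges it into $S^j_{m_j-1}$, so the final $|\calS^j|$ may be one less than $m_j$; one must ensure the counting argument is applied to the pre-merge collection (where the clean bound $|\calS^j|X>\sum w(S^j_t)$ holds because every pre-merge part has weight $<(c-1)\lambda\le X$) and that the post-merge $|\calS|$ is still at least $\sum_j(m_j-1)+|H|$ — equivalently, one shows $|\calV^j|\le m_j-1$ so that a possible unit loss from merging is absorbed. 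Everything else is a direct consequence of the separator property of $H$ and the weight guarantees of \algBalancedPartition{}.
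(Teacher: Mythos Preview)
Your proposal is correct and follows essentially the same approach as the paper's proof: partition the optimal solution $\calS^*$ according to whether parts meet $H\cup V(\calQ)$ or lie inside some $V_j$, bound $|\calV^{H\cup\calQ}|\le|H|$ by showing every such part must hit $H$, bound each $|\calV^j|$ against the pre-merge $m_j$ via the weight inequality $m_j(c-1)\lambda>\sum_t w(S^j_t)\ge |\calV^j|X^*$, and absorb the possible unit loss from merging using the strict inequality $|\calV^j|\le m_j-1$. Your handling of the merging bookkeeping matches the paper's exactly.
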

\begin{proof}
	Let $H,\calQ,\calV = \{V_1,\dots,V_{\ell}\}$ be the computed vertices and connected vertex sets in the algorithm for $\lambda=\lfloor X/(c-1)\rfloor$, respectively.
	Recall that $w(V_i) \geq \lambda$ for every $V_i \in \calV$ and $w(Q) < \lambda$ for every $Q \in \calQ$.
	Let $\calS^* = \left\{S^*_1, \dots, S^*_k\right\}$ be an optimal solution of $(G,k)$, i.e., $\calS^*$ is an $[X^*, \infty)$-{$\cvp_{k}$} of $V$. 	
	Consider the sets $\calV^{H \cup \calQ} := \left\{S^*_i \in \calS^*|  S^*_i \cap (H \cup V(\calQ)) \ne \varnothing \right\}$, $\calV^{1} := \left\{S^*_i \in \calS^*|  S^*_i \cap V_1 \neq \varnothing \right\} \setminus \calV^{H \cup \calQ}$, $\dots$, $\calV^{\ell} := \left\{S^*_i \in \calS^*|  S^*_i \cap V_{\ell} \neq \varnothing \right\} \setminus \mathcal{V}^{H \cup \calQ}$.
	We claim that these sets are a partition of $\calS^*$. This follows directly from the fact that $H$ separates all $V_i \in \calV$ and all $Q \in \calQ$ from each other.
	That is, for an $i \in [k]$ and $j \in [\ell]$ the  connected vertex set $S^*_i$ with $S^*_i \cap V_j \neq \varnothing$ and $S^*_i \cap V \setminus V_j \neq \varnothing$ contains at least one $h \in H$ and hence $S^*_i \in \calV^{H \cup \calQ}$.	Otherwise, if $S^*_i \subseteq V_j$, then $S^*_i \in \calV^j$.
	
	Suppose {\algmaxmin} terminates with $|\calS|<k$ although $X\le X^*$. 	
	We show that $\left|\mathcal{V}^{H \cup \calQ}\right| \leq |H|$ and $\left| \mathcal{V}^{i}  \right| \leq |\calS^{i}|$ for every $i \in [\ell]$, implying that $\left|\calS^*\right|\leq |H| + \sum_{i=1}^{\ell} |\calS^{i}| = |\calS|<k$, which contradicts $\left|\calS^*\right|=k$.
	
	First, we show $\left|\calS^{H \cup \mathcal{Q}}\right| \leq |H|$.
	For this, it is sufficient to prove that $S^*_i \cap H \neq \varnothing$ for each $S^*_i \in \calV^{H \cup \calQ}$ as $\calS^*$ is a partition of $V$.
	We prove this by contradiction.
	Suppose there is an $S^*_i\in \calV^{H \cup \calQ}$, such that $S^*_i \cap H = \varnothing$.
	This implies that $S^*_i \subseteq Q$ for some $Q \in \calQ$, since $H$ separates every $Q \in \calQ$ from every other $Q' \in \calQ \setminus \{Q\}$ and from the vertices $V \setminus (H \cup V(\calQ))$.
	Thus, $w(S_i^*) \leq w(Q) < \lambda$ by the definition of $\calQ$ and therefore $w(S_i^*)<\lambda= \lfloor X/(c-1)\rfloor \le \lfloor X^*/(c-1)\rfloor$, contradicting $\min_{i \in [k]} w(S^*_i)=X^*$.
	
	It remains to show that $\left| \mathcal{V}^{i}  \right| \leq |\calS^{i}|$ for every $i \in [\ell]$.
	Fix an $i \in [\ell]$ and let $G[V_i]$ with $\lambda$ be the input when calling algorithm {\algBalancedPartition}.
	Observe that the input is valid, since $G[V_i]$ is connected by definition and $w(G[V_i])\geq \lambda \geq \max_{v \in V_i} w(v) $ as $H$ contains all vertices that have weight more than $\lambda$.
	Algorithm {\algBalancedPartition} provides a {\cvp} $\calS^i = \{S^i_1,\dots,S^i_{m_i}\}$ of $V_i$ with $w(S^i_j) \in [\lambda, (c-1)\lambda)$ for every $j \in [m_i - 1]$ and $w(S^i_{m_i}) < (c-1)\lambda$. 
	Consider $\calS^i$ before merging, i.e.~we do not merge $S^i_{m_i}$ to $S^i_{m_i-1}$ in the algorithm {\algmaxmin} if $w(S_{m_i}) < \lambda$.
	That is, $w(S_m) < \lambda$ is possible, and we need to show $\left| \mathcal{V}^{i}  \right| \leq m_i -1 = |\calS^{i}| -1$.
	Observe for $S^* \in \calV^{i}$ that $S^* \subseteq V_i$, i.e.~$\sum^{m_i}_{j=1} w(S^i_j) \geq \sum_{S^* \in \calV^{i}} w(S^*)$.
	As a result, we have $|\calS^{i}| X \geq |\calS^{i}| (c-1)\lambda > \sum^{m_i}_{j=1} w(S^i_j) \geq \sum_{S^* \in \calV^{i}} w(S^*) \geq |\calV^{i}|X^*$.
	Consequently, by $X \leq X^*$ we obtain  $|\calV^{i}| < |\calS^{i}|$, which leads  to $|\calV^{i}| \leq |\calS^{i}| - 1$.
\end{proof}

\paragraph*{Running time:}
Finding the heavy vertices $H$ and computing the resulting connecting components $\calV = \{V_1,\dots,V_\ell\}$ of $G-H$ can be performed in time $\O(|E|)$.
The algorithm {\algBalancedPartition} runs in time $\bigO(|E(G[V_i])|)$ by \cref{lemma::algBalPart} for every $i \in [\ell]$ and consequently, in $\O(|E|)$ in total as the vertex sets in $\calV$ are pairwise disjoint.
Hence, algorithm {\algmaxmin} runs in time $\O(|E|)$. 

We modify slightly the binary search to optimize the total running time. 
Let $g(\ell) := 2^{\ell}$, $\mathbb{N} \ni \ell \geq 1$.
We increase stepwise $\ell$ in $g(\ell)$ until we find an $\ell^*$ with $g(\ell^*) < X^* \leq \min\left(g\left(\ell^* + 1\right), w(G)/k \right) =: \widehat{X} \leq 2X^*$.
Afterwards, we perform a binary search  in the interval $X \in \left[g(\ell^*),\widehat{X}\right]$.
As a result, finding the $(c-1)$-approximate solution $\calS$ runs in time $\bigO\left(\left(\log X^* + log X^* \right)|E|\right)$ and thus, we obtain a running time in $\bigO\left(log\left(X^*\right)|E|\right)$.
This completes the proof of \cref{thm::MaxMin}.

\section{Missing details from \cref{section::GL}}
\label{app:GL}

\subsection{Preprocessing to get $\wmax<\alpha w_k$}
\label{app:preprocess}
Suppose $w_{\max} \geq \alpha w_\ell$, where $\ell$ is the smallest index in $[k]$ that satisfies this inequality.
We remove a vertex $\vmax$ with $w(\vmax) = \wmax$ from $G$ and $w_\ell$ from $\mathcal{W}$.
Further, we set $T_\ell = \{\vmax\}$ and consider the set for index $\ell$ to be finished, i.e., we now aim to find a set $\T = \{T_1, \dots, T_{\ell-1}, T_{\ell+1}, \dots, T_k\}$  according to $\mathcal{W} = \{w_1, \dots, w_{\ell-1}, w_{\ell+1}, \dots, w_k\}$.
Observe that since we only deleted one vertex, $G$ now is at least $(k-1)$-connected, and $|\mathcal{W}| = k-1$. 
Note that since $w_{\max} \leq w_k \leq w_\ell\le 3\alpha w_\ell$, we have $w(T_\ell) \in [\alpha w_\ell, 3\alpha w_\ell]$ as required.
Also, we obtain $w(G) \geq \sum_{w_j \in \mathcal{W}} w_j$ after removing $w_{\ell}$ from $\mathcal{W}$ and $\vmax$ from $G$.
After this preprocessing step, we can assume that we have a $k$-connected graph $G = (V,E,w)$ and natural numbers $w_1, \dots, w_k$ sorted in descending order, where $\sum_{i=1}^k w_i \leq w(G)$ and $\wmax < \alpha w_k$. 

\subsection{Missing lemmas for correctness of {\algBoundedGL}}
\label{app:BoundedGL}

\begin{proof}[Proof of \cref{thm:one-side} given \cref{thm::GLpartition}]
	First we prove the lower-bound version. 
	Apply \cref{thm::GLpartition} on $G$ with $\alpha = \frac{1}{3}$ and let $\T = \{T_1, \dots, T_\ell\}$ be the resulting vertex sets.
	For each $T_i\in \T$, 
	we have that $\frac{1}{3}w_i\le w(T_i)\le w_i$.
	We claim that $\ell=k$.
	Indeed, if $\ell<k$, then
	since $w(T_i) \leq w_i$ for every $i \in [\ell]$, we have that $\sum_{i=1}^\ell w(T_i)< \sum_{i=1}^k w_i = w(G)$, and hence $\T$ is not a \cvp, a contradiction to \cref{thm::GLpartition}.
	Note that each $T_i\in\T$ already satisfies the required lower bound of $w(T_i)\ge \frac{1}{3}w_i$. 
	Now, we can obtain the desired {\cvp} $\T^*$ by adding each connected component $Q$ of $G \setminus V(\T)$ to an arbitrary vertex set in $\T$ that has an edge to $Q$.
	
	Now, we prove the upper-bound version. 
	Apply \cref{thm::GLpartition} on $G$ with $\alpha = 1$ and let $\T = \{T_1, \dots, T_\ell\}$ be the resulting vertex sets.
	For each $T_i\in \T$, 
	we have that $w_i\le w(T_i)\le 3w_i$.
	We claim that $\T$ is a {\cvp} of $V$.
	Indeed, by \cref{thm::GLpartition}, if $\T$ is not a \cvp, then $\ell=k$ but then
	since $ w(T_i) \geq w_i$ for every $i \in [\ell]$, we get $\sum_{i=1}^\ell w(T_i) = w(G)$, a contradiction. 
	Note that each $T_i\in\T$ already satisfies the required upper bound of $w(T_i)\le 3w_i$. 
	If $\ell = k$, then $w(T_i) = w_i$ for every $i \in [k]$ and we are done.
	In case $\ell < k$ we remove an arbitrary vertex $v$ from a vertex set $T \in \T$ with $|T|\ge 2$ (at least one such set exists due to $k$-connectivity), such that $T$ is still a connected vertex set, and we add $\{v\}$ as a new set to $\T$.
	We repeat this till $|\T| = k$.
	Now, the resulting $\T$ is a {\cvp} with the desired upper bound conditions, since we only remove vertices from already existing sets in $\T$ and since $\wmax \leq w_k$ the new singleton vertex sets in $\T$ satisfy the required upper bound conditions. Observe that this last step does not work, if we do not require $\wmax \leq w_k$.
\end{proof}

\begin{proof}[Proof of \cref{lem:inner_loop}]
	The occurrence of one of the two cases in Step~\ref{algBoundedGL::RemoveQ} is shown in \cref{lemma::connectionQ}.
	For the correctness of Case 1, observe that we can use \cref{lemma::divideInto2Comp} to divide $T_j\cup Q$,	as $T_j\cup Q$ cannot have an $(\alpha w_j)$-separator (this would give an ($\alpha w_j)$-separator for $T_j$ implying that $T_j\in \T_b$).
	It remains to prove that the inner loop terminates as claimed.	If Step~\ref{algBoundedGL::Case1} is executed, then an $i$-big component is created in $\og$ as $\og$ now contains $V_1$ returned by the divide-routine, and hence the loop is terminated.
	The same yields if we apply the divide routine in Step~\ref{algBoundedGL::Case2}.
	So, suppose the inner loop never executes the divide routine.
	In the other cases, either a connected component is deleted from $\og$ or new vertices are added to a connected component in $\og$.
	Also note that new connected components are not introduced to $\og$ and vertices are not deleted from existing connected components (except when the whole connected component is removed; also, two or more connected components may merge due to the introduction of new vertices to $\og$).
	Thus, after $|V|^2$ iterations either there is an $i$-big component or $\og$ is empty. 
\end{proof}

\begin{lemma}
	\label{lemma::connectionQ}
In Step~\ref{algBoundedGL::RemoveQ}	at least Case 1 or Case 2 occurs.
\end{lemma}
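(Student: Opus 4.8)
The plan is to argue by contradiction: suppose that in Step~\ref{algBoundedGL::RemoveQ} the picked $i$-small component $Q$ of $\og$ has no admissible $T_j$, i.e., for every $T_j \in \T_a$ there is no edge between $Q$ and $T_j$, and for every $T_j \in \T_b$ there is no edge between $Q$ and any component $Q' \in C(T_j)$. The key observation is that $Q$ must have neighbours \emph{somewhere} in $G$: indeed, $G$ is connected, and we are in the situation where $V(\T)\neq\varnothing$ (the main loop has already added at least $T_1$), so if $Q\ne V$ then $N_G(Q)\neq\varnothing$, and moreover $N_G(Q)$ cannot be contained in $V(\og)$, because otherwise $Q$ together with its neighbouring component(s) in $\og$ would not have been a connected component of $\og$ — contradiction. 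Hence $N_G(Q)$ meets $V(\T)$, so there is some $T_j\in\T$ with an edge to $Q$.

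Next I would do a case split on whether this $T_j$ lies in $\T_a$ or $\T_b$. If $T_j\in\T_a$, then the edge from $Q$ to $T_j$ is exactly what Case~1 requires, contradicting our assumption. If $T_j\in\T_b$, let $s=s(T_j)$ be its fixed $(\alpha w_j)$-separator and recall $C(T_j)$ is the set of connected components of $G[T_j\setminus\{s\}]$. Any edge from $Q$ to $T_j$ ends either at $s$ or at some vertex of a component $Q'\in C(T_j)$. In the latter case we are exactly in Case~2, contradiction. So the only way to escape is if \emph{every} $T_j\in\T$ with an edge to $Q$ lies in $\T_b$ and all such edges from $Q$ go to the respective separators $s(T_j)$ only.

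The hard part — and the crux of the lemma — is to rule out this last escape route. Here I would use $k$-connectivity (or rather that each relevant cut has size forced by the construction). The idea is that $Q$, being $i$-small, is "too light" to be separated from the rest of $G$ only through the single vertices $s(T_j)$: consider the set $S$ of all separator vertices $s(T_j)$ for $T_j\in\T_b$ that are adjacent to $Q$ together with, if needed, a bounded-size set of vertices, and observe that $S$ would form a small separator isolating $Q$ (or a small connected piece containing $Q$) from a large remainder — contradicting $k$-connectivity once $|S|<k$, which should hold because $i\le k$ and only a controlled number of blocks can be adjacent. Concretely I would argue: the neighbourhood of the component of $\og$ containing $Q$ inside $V(\T)$, under the escape assumption, is a subset of $\{s(T_j): T_j\in\T_b\}$, which has size at most $|\T_b|\le i-1\le k-1$; this is a vertex cut of $G$ of size $\le k-1$ separating $Q$ from the (nonempty) rest of $G$, contradicting $k$-connectivity. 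I expect verifying this cut-size bound cleanly — in particular checking the remainder of $G$ after removing the cut is nonempty and that no $\T_a$-block or interior vertex of a $\T_b$-block sneaks an edge to $Q$ — to be where the real work lies, and it is exactly there that the precise invariants maintained by Algorithm~\algBoundedGL{} (Lemma~\ref{lem:invT}) and the $i$-smallness of $Q$ must be invoked.
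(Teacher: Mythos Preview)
Your proposal is correct and follows the same route as the paper: assume Case~1 fails, observe that then $N_G(Q)\subseteq\{s(T_j):T_j\in\T_b\}\cup\big(\bigcup_{T_j\in\T_b}(T_j\setminus\{s(T_j)\})\big)$, and use $k$-connectivity together with $|\{s(T_j)\}|\le|\T|<k$ to force an edge from $Q$ into some $T_j\setminus\{s(T_j)\}$. One remark: the step you flag as ``the real work'' is in fact immediate---neither the $i$-smallness of $Q$ nor Lemma~\ref{lem:invT} is needed beyond the bound $|\T|=i-1<k$, and nonemptiness of the other side of the cut follows since each $T_j\setminus\{s(T_j)\}\neq\varnothing$ (as $w(T_j)\ge\alpha w_j>\wmax\ge w(s(T_j))$ after preprocessing).
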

\begin{proof}
	Suppose Case 1 does not occur i.e., $Q$ does not have an edge to any $T_j\in \T_a$.
	For $T_j\in \T_b$, let $s_j$ denote the fixed $(\alpha w_j)$-seperator vertex.
	Since $G$ is $k$-connected and $|\T|<k$, there is an edge from $Q$ to at least one vertex in $\bigcup_{T_j\in \T_b}T_j\setminus \{s_j\}$.
	Thus, Case 2 occurs.
\end{proof}

\begin{lemma}
	\label{lem:bigcomp}
	Whenever the divide routine in algorithm {\algBoundedGL} is to be executed, there is at least one $i$-big component in $\og$.
\end{lemma}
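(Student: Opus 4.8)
The plan is to reduce \cref{lem:bigcomp} to a single invariant on the index variable $i$ and then read that invariant off the loop structure of \algBoundedGL. The only place where the divide routine is invoked in order to \emph{create} a new part is Step~\ref{algBoundedGL::For_i}, where $T_i$ must be produced with $w(T_i)\in[\alpha w_i,3\alpha w_i]$; the calls inside the inner loop (Steps~\ref{algBoundedGL::Case1} and~\ref{algBoundedGL::Case2}) are always applied to a set of the form $T_j\cup Q$ with $w(T_j\cup Q)>3\alpha w_j>3(\alpha w_j-1)$ and $\alpha w_j\ge\alpha w_k>\wmax$, so \cref{lemma::divideInto2Comp} applies to them irrespective of any $i$-big component. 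Hence it is enough to show: \emph{every time Step~\ref{algBoundedGL::For_i} is about to be executed, with $i$ the current value of the index, $\og$ contains an $i$-big connected component}; this is exactly what guarantees that a connected $T_i$ with $w(T_i)\in[\alpha w_i,3\alpha w_i]$ can be extracted.

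First I would treat the first pass through the main loop: then $\T=\varnothing$, so $\og=G$, which is connected and hence equals its only connected component; since $w(G)\ge\sum_{j=1}^{k}w_j\ge w_1\ge\alpha w_1$ (using $\alpha\le 1$), that component is $1$-big. For every later pass I would argue as follows. In the preceding pass, Step~\ref{algBoundedGL::For_i} added $T_{i-1}$ to $\T$ and incremented the index to its current value $i$, and then the inner loop (Step~\ref{algBoundedGL::RemoveQ}) was executed \emph{with this same $i$}. By \cref{lem:inner_loop} that inner loop terminates; on exit its guard ``$\og$ is nonempty and has no $i$-big component'' is therefore false. On the other hand, we reach the current Step~\ref{algBoundedGL::For_i} only because the main-loop guard $\og\neq\varnothing$ holds. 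The two facts together force $\og$ to contain an $i$-big connected component.

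To close the loop I would then note why this indeed yields a valid $T_i$: from an $i$-big component $C$ we take $T_i=C$ when $w(C)\le 3\alpha w_i$, and otherwise apply \cref{lemma::divideInto2Comp} to $C$ with threshold $\alpha w_i$ (valid since $w(C)>3\alpha w_i>3(\alpha w_i-1)$ and $\alpha w_i\ge\alpha w_k>\wmax$), recursing on whichever part stays $i$-big (its weight strictly drops), or, in the separator case, accumulating components around the separator greedily to reach weight in $[\alpha w_i,\,2\alpha w_i+\wmax)\subseteq[\alpha w_i,3\alpha w_i)$. I would also remark that for the divide-routine calls inside the inner loop the set $V_1$ returned has $w(V_1)\ge\alpha w_j\ge\alpha w_i$ (since $j\le i-1$ gives $w_j\ge w_i$), so such a call itself places an $i$-big component into $\og$.

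I do not foresee a real difficulty: the argument is bookkeeping. The one point to be careful about is that the inner loop of the preceding pass is run with exactly the $i$ that the current Step~\ref{algBoundedGL::For_i} uses, together with the appeal to \cref{lem:inner_loop} for termination of that inner loop — which is legitimate because \cref{lem:inner_loop} is established without reference to \cref{lem:bigcomp}.
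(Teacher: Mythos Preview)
Your proposal is correct and follows essentially the same argument as the paper: the first iteration has $\og=G$ connected with weight at least $\alpha w_1$, and each later execution of Step~\ref{algBoundedGL::For_i} is reached only after the preceding inner loop's guard has become false while the main-loop guard $\og\neq\varnothing$ still holds, forcing an $i$-big component. Your additional remarks---distinguishing the inner-loop divide calls, sketching how to actually extract $T_i$, and noting non-circularity with \cref{lem:inner_loop}---are accurate but go beyond what the lemma statement itself requires.
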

\begin{proof}
	When $i=1$ i.e.~in the first main loop iteration, this holds because $\og=G$ is connected and $\alpha w_1\leq w_1\leq \sum_{i=1}^k w_i \leq w(G)$. 
	For the subsequent iterations, the divide routine is only applied after the inner loop is terminated which only happens if either $\og$ is empty or has an $i$-big component.
	In case $\og$ is empty, then the algorithm terminates.
	So, if the algorithm applies the divide routine in the inner loop, then $\og$ has an $i$-big component.
\end{proof}
\subsection{Runtime analysis of {\algBoundedGL}}
\label{app:runtimeBGL}
\begin{lemma}
	The algorithm {\algBoundedGL} runs in $\mathcal O(k|V|^2|E|)$ time.
\end{lemma}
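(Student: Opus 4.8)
The plan is to charge the running time to the two nested loops of Algorithm~\algBoundedGL{} separately. The outer loop runs at most $k$ times, since every iteration executes Step~\ref{algBoundedGL::For_i} once, incrementing $i$, and the algorithm halts as soon as $i=k+1$; moreover $k$-connectivity gives $|V|\ge k+1$, so $k\le|V|$ holds throughout (and the preprocessing of \cref{app:preprocess} decreases $k$ and $|V|$ together). It therefore suffices to bound one outer iteration by $\mathcal O(|V|^2|E|)$. Step~\ref{algBoundedGL::For_i} itself is cheap: by \cref{lem:bigcomp} there is an $i$-big component $C$ of $\og$, and since $\alpha w_i>\wmax$ after preprocessing, a depth-first search inside $C$ truncated as soon as the visited weight reaches $\alpha w_i$ returns a connected set of weight in $[\alpha w_i,\alpha w_i+\wmax)\subseteq[\alpha w_i,3\alpha w_i]$ in $\mathcal O(|V|+|E|)$ time (repeatedly applying \cref{lemma::divideInto2Comp} would also stay within budget). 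So the cost of an outer iteration is governed by its inner loop.

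For the inner loop, \cref{lem:inner_loop} gives at most $|V|^2$ iterations, and I will argue that each costs $\mathcal O(|E|)$, with at most one exception costing $\mathcal O(|V|\,|E|)$. The decisive observation is that a single inner iteration touches exactly one $T_j\in\T$: it either absorbs a component $Q$ of $\og$ into $T_j$, or moves some $Q'$ out of $T_j$, or (in the divide step) replaces $T_j$ by one half of a CVP$_2$. Before entering the inner loop I would compute, for each $T_j$, whether it has an $\alpha w_j$-separator and, if so, the components $C(T_j)$ of its removal; the point is that this test—and reading off the induced components—can be done by one DFS augmented with subtree weights and low-values (the textbook articulation-point routine) in $\mathcal O(|E|)$ per set, i.e.\ faster than the generic $\mathcal O(|V|\,|E|)$ bound of \cref{lemma::divideInto2Comp}. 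This initial bookkeeping costs $\mathcal O(k|E|)\subseteq\mathcal O(|V|^2|E|)$, and after an iteration only the one modified $T_j$ needs its status recomputed, again in $\mathcal O(|E|)$. The remaining per-iteration work is all $\mathcal O(|E|)$: recomputing the components of $\og$ and choosing an $i$-small $Q$; scanning the boundary edges of $Q$ to find a usable $T_j$ (one exists by \cref{lemma::connectionQ}); in Case~2, testing $\alpha w_j$-dividability of $T_j\cup Q$ by the same linear $\alpha w_j$-separator test, which is valid because $w(T_j\cup Q)>3(\alpha w_j-1)$, so by \cref{lemma::divideInto2Comp} non-dividability is equivalent to possessing an $\alpha w_j$-separator; and the $\mathcal O(|V|)$ weight updates. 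The single genuinely expensive primitive is the construction of the CVP$_2$ via \cref{lemma::divideInto2Comp}, costing $\mathcal O(|V|\,|E|)$; but, as noted in the proof of \cref{lem:inner_loop}, running it creates an $i$-big component in $\og$ and hence ends the inner loop, so it is invoked at most once per inner loop, contributing $\mathcal O(|V|\,|E|)$ to an outer iteration and $\mathcal O(k|V|\,|E|)$ in total.

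Summing up, one inner-loop run costs $\mathcal O(k|E|)+|V|^2\cdot\mathcal O(|E|)+\mathcal O(|V|\,|E|)=\mathcal O(|V|^2|E|)$, so one outer iteration costs $\mathcal O(|V|^2|E|)$ and the whole algorithm $\mathcal O(k|V|^2|E|)$. The main obstacle is precisely this inner-loop accounting: a naive implementation that rediscovers all separator data with \cref{lemma::divideInto2Comp} in each of the up to $|V|^2$ iterations would pay $\Theta(k|V|\,|E|)$ per iteration and overshoot the target by a factor of order $k|V|$. Avoiding this needs all three ingredients—that only one set changes per iteration, that $\alpha w_j$-separators (and the components they induce) are found in linear time, and that the $\mathcal O(|V|\,|E|)$ CVP$_2$ construction fires at most once per inner loop.
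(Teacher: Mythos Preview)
Your proof is correct and reaches the same bound, but it differs from the paper's argument in how the $\T_a/\T_b$ bookkeeping is handled. The paper does \emph{not} recompute the $(\alpha w_j)$-separator status after every inner iteration. Instead it only categorizes a set $T_j$ once its weight first reaches $2\alpha w_j$, and then proves two structural invariants (Claims~\ref{claim::divide} and~\ref{claim::2alpha}) showing that the category of $T_j$ is stable: a $T_j\in\T_b$ stays in $\T_b$ with the same separator after the $Q'$-removal of Step~\ref{algBoundedGL::Case2}, and a $T_j\in\T_a$ can only leave $\T_a$ via the divide routine. Hence at most $2k$ categorizations occur over the whole algorithm, each done via the $\mathcal O(|V|\,|E|)$ black box of \cref{lemma::divideInto2Comp}. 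Your route avoids these invariants entirely by observing that an $(\alpha w_j)$-separator (and the components it induces) can be found in $\mathcal O(|E|)$ with a single DFS computing subtree weights and low-points, and that only the one modified $T_j$ needs this refresh per inner iteration. This is a genuine simplification---you trade the stability claims for a sharper primitive not stated in the paper---while the paper's version has the virtue of using only \cref{lemma::divideInto2Comp} as given. Both arguments correctly isolate the single $\mathcal O(|V|\,|E|)$ divide call per outer iteration, which is the real bottleneck.
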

\begin{proof}
Clearly, the main loop has at most $k$ iterations and
the inner loop has at most $|V|^2$ iterations by \cref{lem:inner_loop}.

We categorize only the elements in $\T$ that satisfy $w(T_j) \geq 2 \alpha w_j$ into $\T_a$ and $\T_b$.
Note that $w(Q) < w_i$ as $Q$ is $i$-small.
That is, at the point where some $T_j \in \T$  exceeds $2 \alpha w_j$, its weight is less than $3 \alpha w_j$ and the category of $T_j$ can be computed.
Consequently, reaching Step~\ref{algBoundedGL::Case2} means that we have $w(T_j) \geq 2 \alpha w_j$.
Otherwise, we have $w(T_j \cup Q) \leq 3 \alpha w_j$ and would iterate with the next $i$-small connected component of $\og$.
Now, in case we remove $Q'$ from $T_j$ in Step~\ref{algBoundedGL::Case2} will show that the weight still satisfies $w(T_j \setminus Q') \geq 2 \alpha w_j$ to avoid a re-categorization of $T_j$.
This finally will ensure that we can bound the number of categorizations of $T \in \T$ into $\T_a$ and $\T_b$ throughout the whole algorithm by $2k$.
Moreover, that we can realize a check whether $T_j \cup Q$ is $\alpha w_j$-dividable in Step~\ref{algBoundedGL::Case2} in linear time.
Observe by \cref{lemma::divideInto2Comp} that a $T_j \in \T_a$ will never change its category except when we execute the divide routine on $T_j$ in Step~\ref{algBoundedGL::Case1}.
Now, we want to ensure the same for the elements in $\T_b$ in Step~\ref{algBoundedGL::Case2}.

\begin{claim}
	\label{claim::divide}
	Let $T_j \in \T_b$ with $w(T_j) \geq 2 \alpha w_j$ and $w(T_j \cup Q) \geq 3 \alpha w_j$, where $w(Q) < w_j$.
	If $G[V(C(T_j)) \cup Q]$ contains a component $R \subset T_j \cup Q$ with weight at least $\alpha w_j$, then $T_j \cup Q$ is $\alpha w_j$-dividable.
\end{claim}
\begin{proof}
	Clearly $R$ contains $Q$ as $T_j \in \T_b$ and $Q$ connects some components in $\calQ \subseteq C(T_j)$.
	As long $Q$ weigh less than $\alpha w_j$ we add individual $Q'' \in \calQ$ to $Q$.
	By $w(Q'') < \alpha w_j$ we can ensure that the resulting $Q$ weighs less than $2 \alpha w_j$ and thus $w(T_j \setminus Q) \geq \alpha w_j$ as $w(T_j \cup Q) \geq 3 \alpha w_j$.
	Since every component of $C(T_j)$ is connected to $s_j$, we produce with $Q$ and $T_j \setminus Q$ two connected vertex sets each of weight at least $\alpha w_j$.
\end{proof}

\begin{claim}
	\label{claim::2alpha}
	Let $T_j \in \T_b$ with $w(T_j) \geq 2 \alpha w_j$ and $w(T_j \cup Q) \geq 3 \alpha w_j$, where $w(Q) < w_j$.
	If $G[V(C(T_j)) \cup Q]$ contains no component with weight at least $\alpha w_j$, then in Step~\ref{algBoundedGL::Case2} we have $w(T_j \setminus Q') \geq 2 \alpha w_j$.
	Furthermore, after removing $Q'$ from $T_j$ we have $T_j \in \T_b$ with the same $\alpha w_j$-separator $s_j$.
\end{claim}
\begin{proof}
	$Q'$ is connected to $Q$ and $w(Q \cup Q') < \alpha w_j$ as $G[V(C(T_j)) \cup Q]$ contains no component with weight of at least $\alpha w_j$. Thus, $w(T_j \setminus Q') < 2 \alpha w_j$ contradicts $w(T_j \cup Q) \geq 3 \alpha w_j$.
	The second part of the claim is obviously true as we remove only $Q'$ from $C(T_j)$.
\end{proof}

As a result, once a $T \in \T$ is categorized, then a re-categorization is only necessary if we apply the divide routine on $T$.
Since this can happen once in the outer loop and this loop is executed at most $k$ times, we get at most $k$ additional re-categorizations and therefore in total at most $2k$ categorizations.
Hence, by \cref{lemma::divideInto2Comp} we may realize this in $\bigO(k|V||E|)$.

It remains to analyze the steps in the inner loop.
Applying the divide routine in 
Step~\ref{algBoundedGL::Case1} or in Step~\ref{algBoundedGL::Case2} can be performed in $\mathcal O(|V||E|)$ by \cref{lemma::divideInto2Comp}. 
This occurs at most once per iteration of the outer loop as the inner loop terminates once a divide routine is executed; note that the divide routine creates the $i$-big component $V_1$ in $\og$.
All other individual steps can be easily realized in time $\mathcal O(|E|)$, also the check, whether $T_j \cup Q$ is $\alpha w_j$-dividable in Step~\ref{algBoundedGL::Case2} (cf.~\cref{claim::divide}).
Thus the claimed runtime follows.
\end{proof}

	\section{Missing details from \cref{sec:double-side}}
\label{app:double-side}
First we show the existence of the required path in Step~\ref{doubleGL:findPath} of {\algDoubleBoundedGL}.
\begin{lemma}
	\label{lemma::setsOfP}
	In the algorithm {\algDoubleBoundedGL}, whenever Step~\ref{doubleGL:findPath} is reached, the sets $\calQ$ and $\T^-$ are non-empty and there is a path from $\calQ$ to $\T^-$ in $H$.
\end{lemma}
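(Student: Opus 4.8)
The first two claims are short. Step~\ref{doubleGL:findPath} is only executed inside the while-loop of Step~\ref{doubleGL:while}, whose condition is $\calQ\neq\varnothing$, so $\calQ$ is non-empty by construction. For $\T^-\neq\varnothing$ I would argue by contradiction: if $\T^-=\varnothing$ then $w(T_i)\ge w_i$ for all $i\in[k]$, hence $w(V(\T))\ge\sum_{i=1}^k w_i=w(G)$; but $\calQ\neq\varnothing$ means $V(\T)\subsetneq V$, and since vertex weights are positive this gives $w(V(\T))<w(G)$, a contradiction.

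For the path I would fix an arbitrary $Q\in\calQ$, let $R\subseteq\calV_H$ be the set of nodes of $H$ reachable from $Q$, and assume towards a contradiction that $R$ contains no element of $\T^-$. Write $W:=\bigcup_{V'\in R}V'$ and $S:=\{\,s(T)\mid T\in\T^+_b\,\}$. The structural facts I would rely on are that the nodes of $\calV_H$ are pairwise disjoint vertex sets whose union is exactly $V\setminus S$ (the only vertices of $G$ missed are the fixed separators of the sets in $\T^+_b$), and that by definition of $E_H$ any two distinct nodes of $\calV_H$ joined by a $G$-edge are adjacent in $H$. From these I get $N_G(W)\subseteq S$: if $v\notin W$ had a neighbour $u\in W$ with $v\notin S$, then $v$ would lie in some node $V'\in\calV_H$ necessarily distinct from, and hence $H$-adjacent to, the node of $R$ containing $u$, forcing $V'\in R$ and contradicting $v\notin W$.

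To finish I would invoke $k$-connectivity. Since $\T^-\neq\varnothing$ we have $|\T^+|\le k-1$, so $|S|=|\T^+_b|\le|\T^+|\le k-1$ (the map $T\mapsto s(T)$ is injective as the sets in $\T^+_b$ are disjoint). Choose any $T_i\in\T^-$; then $T_i\notin R$, so by disjointness of the nodes of $\calV_H$ the set $T_i$ is disjoint from $W$, and since every vertex of $S$ lies in some $T_j\in\T^+_b$ while $T_i\in\T^-$ we also have $T_i\cap S=\varnothing$, hence $T_i$ is disjoint from $N_G(W)$. Thus $W$ and $V(T_i)$ are non-empty and lie in different components of $G-N_G(W)$, so deleting the at most $k-1$ vertices of $N_G(W)$ disconnects $G$ — impossible for a $k$-connected graph. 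Therefore $R$ meets $\T^-$, that is, there is a path from $\calQ$ to $\T^-$ in $H$.

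The step I expect to need the most care is establishing $N_G(W)\subseteq S$; it rests entirely on checking that $\calV_H$ genuinely partitions $V\setminus S$ and that $E_H$ records all $G$-adjacencies between distinct nodes, so that $H$-reachability is closed under taking $G$-neighbourhoods except at the separator vertices. The remaining ingredients — the weight bound $w(T_i)\le w_i$ inherited from \cref{thm::GLpartition} with $\alpha=\tfrac13$, and the bookkeeping around $\T^+_a,\T^+_b,\T^-,C(T)$ — are routine.
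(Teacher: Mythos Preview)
Your argument is correct and matches the paper's approach: both hinge on $|S|=|\T^+_b|\le k-1$ together with $k$-connectivity. The paper is terser --- it simply observes that $G-s(\T^+)$ is connected, hence $H$ (whose nodes partition $V\setminus S$) is connected --- whereas you unfold the same fact into a reachability-and-separator contradiction; note that the bound $w(T_i)\le w_i$ you list among the ``remaining ingredients'' is neither used in your proof nor guaranteed in later while-iterations (after \algTransfer{} has run), but your argument does not depend on it.
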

\begin{proof}
	We know that $\calQ\neq \varnothing$ due to the while loop condition.
	Note that if $|\T^+| = k$, then we have a \emph{perfect partition}, i.e.~$w(T_i) = w_i$ for every $i \in [k]$ as $\sum_{i = 1}^k w_i = w(G)$,
and hence $\calQ=\varnothing$.
Thus, we have that $|\T^+|<k$ and $\T^-\neq \varnothing$. 
	Since $|s(\T^+)| < k$, $G - s(\T^+)$ is connected.
	Hence, the transfer-graph $H$ is connected by construction.
	Since $\calQ,\T^-\neq \varnothing$, there is a path from $\calQ$ to $\T^-$ in $H$.
\end{proof}

\noindent
Now we show the correctness of {\algTransfer}.
\begin{lemma}
	\label{lemma::progress}
	After a call of algorithm {\algTransfer}, either $|\T^*|$ increases, or $|\T^*|$ remains same and the number of $\T$-assigned vertices increases.
	Furthermore, the resulting $\T$ is still pack-satisfied.
\end{lemma}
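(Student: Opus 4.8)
The plan is to verify the claim of \cref{lemma::progress} by carefully walking through each of the six sub-cases of the inner \texttt{for}-loop of \algTransfer{} and checking two things in every case: first, that the resulting $\T$ is still pack-satisfied (each $T_j$ connected, pairwise disjoint, $|\T|=k$, and $w(T_j) \in [\frac13 w_j, \UpBoundGL w_j]$), and second, that the progress measure—lexicographically, first $|\T^*|$, then the number of $\T$-assigned vertices—strictly increases. The key structural invariant to keep in hand is the one noted in the algorithm's comments: by minimality of $P_Q^{T_i}$, every internal vertex set of the path lies in $\T^+_a \cup \bigcup_{T\in\T^+_b} C(T)$, the start is $Q\in\calQ$, and the end is $T_i\in\T^-$; in particular each $T_j = T(P_Q^\ell)$ encountered for $\ell < |P_Q^{T_i}|$ satisfies $w(T_j)\ge w_j$ before modification, and $T_j \in \T^*$ exactly when $T_j\in\T^+_b$ after we reach Step~\ref{transVert::Case2}.

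First I would dispose of the ``early termination'' cases. In Step~\ref{transVert::checkX}, when $w(X)\ge w_u$ we set $T_u = X$ (with $u = \min(I(\T^-))$) and truncate; since $X$ is built up from $Q$ and path vertices it is connected, $w(X)\ge w_u \ge \wmax$ guarantees truncation lands in $[w_u, \UpBoundGL w_u]$, and crucially $T_u$ moves from $\T^-$ into $\T^+$, increasing $|\T^+|$; one then argues $|\T^*|$ does not decrease (which needs the relabeling convention on ties from the definition of $\T^*$) and the vertices previously in $X\setminus V(\T)$ are now $\T$-assigned, so the measure increases. In Step~\ref{transVert::swallowVertices}, we absorb $X$ into $T_j$; connectivity holds because $P_Q^\ell$ has an edge to $T_j$, the weight check is explicit, $|\T^*|$ is unchanged (no $T_j\in\T^*$ shrinks), and the number of $\T$-assigned vertices strictly increases because $Q\subseteq X$ was unassigned. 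Step~\ref{transVert::wj<wi} handles $T_j\notin\T^*$: we swap $T_j\cup X$ into slot $u$ and $T_u$ into slot $j$, then truncate both—here I would check that $w(T_u)$ (old, $<w_u$, but $\ge\frac13 w_u$) fits in $[\frac13 w_j, \UpBoundGL w_j]$ using $r$ and $w_u \le w_j$, and that $w(T_j\cup X) \ge w_j \ge \wmax$ so truncation into slot $u$ works; again $Q$ becomes assigned.

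The remaining two cases are the heart of the argument. Step~\ref{transVert::Case1} ($T_j\in\T^+_a$): since $T_j$ has no $w_j$-separator and $T_j\cup X \supseteq T_j$ is connected, $T_j\cup X$ also has no $w_j$-separator, so \cref{lemma::divideInto2Comp} applies (after checking $w_j > \wmax$, which holds, and $w(T_j\cup X) > 3(w_j-1)$, which holds since $w(T_j)\ge w_j$ and adding $X$ with $w(X)\ge$ something pushes it over—this inequality needs a small computation) to give $V_1,V_2$ each of weight $\ge w_j$; we must verify each is $\le \UpBoundGL w_j$, which follows since $w(V_1\cup V_2) = w(T_j) + w(X) \le \UpBoundGL w_j + (\text{bound on }w(X))$ and $w(V_{3-i})\ge w_j$. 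This is the step where the factor $\UpBoundGL = \max\{r,3\}$ is genuinely needed, because $X$ can be as large as roughly $r\cdot w_j$ in weight (it accumulated components sized relative to earlier $w$'s), so $w(V_i)$ can reach up to about $r w_j$; showing $w(X)$ never exceeds what $\UpBoundGL$ can absorb is, I expect, \textbf{the main obstacle}, and it will rely on tracking that $X$ only ever grows by absorbing $P_Q^\ell\subseteq C(T_j)$ with $w(C(T_j)\text{-component}) < w_j$ in Step~\ref{transVert::Case2} plus the initial $Q$ with $w(Q) < w_{\min}$ (as $Q$ is $i$-small for the relevant index), giving a geometric-type bound. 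Finally Step~\ref{transVert::Case2} ($T_j\in\T^+_b\cap\T^*$): we move $P_Q^\ell$ (a component of $C(T_j)$) out of $T_j$ into $X$ and continue; $T_j$ remains connected because every component of $C(T_j)$ attaches to $s(T_j)$, and $w(T_j) \ge w_j$ must still hold—this uses $w(P_Q^\ell) < w_j$ together with $w(T_j)\ge 2w_j$ or a similar lower bound maintained for members of $\T^+_b$ (mirroring \cref{claim::2alpha}); since this case does not terminate, it contributes nothing to progress on its own but sets up a later terminating step, and one argues that the loop must eventually terminate at the end vertex $T_i\in\T^-$ via Step~\ref{transVert::checkX} or earlier. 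Assembling these case checks, together with the observation that $|\T^*|$ is monotone non-decreasing across the whole algorithm, yields the lemma.
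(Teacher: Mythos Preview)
Your case-by-case plan mirrors the paper's approach, but there are two concrete errors that would derail the argument as written.

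First, your identification of ``the main obstacle'' in Step~\ref{transVert::Case1} rests on the claim that $w(X)$ ``can be as large as roughly $r\cdot w_j$'', prompting you to chase a geometric-type bound on the pieces absorbed into $X$. This is false: at Step~\ref{transVert::Case1} the precondition of Step~\ref{transVert::checkX} has already failed, so $w(X) < w_u$; and since Step~\ref{transVert::wj<wi} was also skipped we have $T_j\in\T^*$, hence $j<u$ and $w_u\le w_j$. Thus $w(X)<w_j$, giving $w(T_j\cup X) < (\UpBoundGL+1)w_j$, so each of $V_1,V_2$ automatically has weight at most $\UpBoundGL w_j$ (or one simply invokes truncation). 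No accumulated bound on $X$ is needed, and your proposed detour would not terminate in a useful inequality.

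Second, in Step~\ref{transVert::wj<wi} you write ``using $r$ and $w_u\le w_j$'', but the inequality goes the other way: $T_j\notin\T^*$ forces $j\ge u$, hence $w_j\le w_u$. This is exactly where the factor $r$ in $\UpBoundGL$ is genuinely needed (not Step~\ref{transVert::Case1}): one must argue $w(T_j\cup X) > \UpBoundGL\, w_j \ge r\,w_k \ge w_1 \ge w_u$, so that the new $T_u$ enters $\T^+$ and $|\T^*|$ strictly increases. Relying on ``$Q$ becomes assigned'' here is insufficient, since truncation may discard vertices; progress in this step comes solely from the $|\T^*|$ increase. The paper organizes the whole proof by first fixing the invariants maintained across the non-terminating Step~\ref{transVert::Case2} iterations (in particular that $I(\T^*)$ is unchanged and $X$ stays connected and disjoint from $V(\T)$), and only then analyzing each terminating step; adopting that two-phase structure and using the simple bound $w(X)<w_u$ throughout would fix both issues.
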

\begin{proof}
	Note that if the precondition of any of the steps~\ref{transVert::checkX} to \ref{transVert::Case1} is satisfied then the algorithm terminates.
	Suppose $\ell$ is the last iteration of the for loop.
	Then in all of the iterations $1,2,\cdots, \ell-1$,
	Step~\ref{transVert::Case2} was executed, 
	and none of the Steps~\ref{transVert::checkX} to \ref{transVert::Case1} are executed.
	Let $\T_0$ be the $\T$ input to the {\algTransfer} routine.
	Let $I^*_0=I(\T_0)$.
	We show that after the iteration $p$ for each $p\in [0,\ell-1]$, the following invariants are satisfied:
	\begin{enumerate}
		\item 	$I(\T^*)=I^*_0$, \label{inv:tstar}	
		\item $\T$ is pack-satisfied, \label{inv:pack}
		\item $X\supseteq Q$, \label{inv:XsupQ}
		\item $X\cap V(\T)=\varnothing$ ,\label{inv:XTempty}
		\item $V(\T)\cup X=V(\T_0)\cup Q$, \label{inv:XTsame}
		\item $G[X]$ is connected. \label{inv:Xconn}
		\item $X$ has an edge in $G$ to $P_Q^{p+1}$, \label{inv:XedgeP}
	\end{enumerate}

	We prove by induction on $p$.
	It is easy to see that all the invariants are satisfied for $p=0$ (i.e. before the first iteration).
	Now, we do the induction step.
	In the iteration $p$, we remove $P_Q^{p}$ from $T_j$.
	Since $w(P_Q^{p})\le w_j$, and $w(T_j\cup X)\ge 3w_j$ (otherwise precondition of Step~\ref{transVert::swallowVertices} would have been satisfied) and $w(X)\le w_u\le w_j$ (if $w(X)\ge w_u$ then precondition of Step~\ref{transVert::checkX} would have been satisfied; the inequality $w_u\le w_j$ is because $T_j \in \T^*$), we have that $T_j$ still has at least $w_j$ weight after the iteration.
	Also, $w(T_j)\le\UpBoundGL w_j$ after truncation.
	Thus $T_j\in \T^+$ and it follows that invariants \ref{inv:tstar}, \ref{inv:pack} are satisfied.
	Since we only add vertices to $X$, invariant~\ref{inv:XsupQ} is satisfied.
	Invariant~\ref{inv:XTempty} and \ref{inv:XTsame} are satisfied because of the corresponding induction assumptions and that the vertex set newly added to $X$ is removed from $T_j$ (and hence from $V(\T))$.
	Invariant~\ref{inv:Xconn} follows as $X$ was connected before (by induction assumption), the newly added set $P_Q^{p}$ is connected, and there is an edge from $X$ to $P_Q^{p}$ (by induction assumption). 
	Since $P_Q^{p}$ has an edge to $P_Q^{p+1}$, invariant~\ref{inv:XedgeP} follows.

	Now, we analyze the final iteration $\ell$.
	In this iteration, clearly one of the steps \ref{transVert::checkX} to \ref{transVert::Case1} satisfies the precondition and the algorithm terminates after this step.
	Note that if $\ell=|P_Q^{T_i}|$, then the precondition of Step~\ref{transVert::wj<wi} is satisfied (if that step is reached without termination).
	We branch on which of the terminating steps is executed. 
	For each of the steps \ref{transVert::checkX} to \ref{transVert::Case1} we show that during the step:
	\begin{enumerate}[label=(\alph*)]
		\item each $T_j\in \T^*$ remains in $\T^+$	\label{term:starplus}
		\item either $T_u$ moves from $\T^-$ to $\T^+$ or the vertices that were in $V(\T)\cup X$ before the step ends up in $V(\T)$ after the step.
			\label{term:TuOrAssign}
		\item $\T$ remains pack-satisfied.\label{term:pack}
	\end{enumerate}

	The above three statements are sufficient to prove the lemma because:
	from \ref{term:starplus} and \ref{term:TuOrAssign}, we have that either $|\T^*|$  increases or $|\T^*|$ remains same and all the vertices in $V(\T)\cup X$ becomes $\T$-assigned, during the terminating step.
	Recall that by invariant~\ref{inv:tstar}, we have that $\T^*$ remains the same during previous iterations and that $V(\T)\cup X=V(\T_0)\cup Q$ before the terminating step.
	Thus, we get that during the {\algTransfer} routine, either $\T^*$ increases, or $|\T^*|$ remains same and the number of $\T$-assigned vertices increase.
	From \ref{term:pack}, we get the pack-satisfiability of $\T$.
	
	It only remains to show that for each of the steps \ref{transVert::checkX} to \ref{transVert::Case1}, \ref{term:starplus}, \ref{term:TuOrAssign}, and \ref{term:pack} are satisfied.
	\begin{description}
		\item[Step~\ref{transVert::checkX}:]
			We have $w(X) \geq w_u$. 
			Hence, setting $T_u = X$ and truncating $T_u$ ensures that $w_u\le T_u\le \UpBoundGL w_u$. 
			Since $X$ is connected by invariant~\ref{inv:Xconn}, 
we have that $T_u$ is connected.  
			Thus $T_u$ moves from $\T^-$ to $\T^+$. Thus \ref{term:TuOrAssign} is satisfied. 
			Since the only set in $\T$ that is modified is $T_u$, \ref{term:starplus} and \ref{term:pack} are also satisfied.
		
		\item[Step~\ref{transVert::swallowVertices}:]
		Observe that $X$ has an edge to $T_j$ by invariant~\ref{inv:XedgeP}, and hence $X\cup T_j$ is connected.
		We have $w(T_j \cup X) \leq \UpBoundGL w_i$ by the precondition of the step. 
		Also, $w(T_j\cup X)\ge w(T_j)\ge w_j$ where the latter inequality follows from that $\T$ was pack-satisfied.
		Since no other set in $\T$ is modified, we have that \ref{term:starplus} and \ref{term:pack} are satisfied.
		Also, \ref{term:TuOrAssign} is satisfied as all vertices that were in $X$ are now $\T$-assigned.
		\item[Step~\ref{transVert::wj<wi}:]
		Observe that $X$ has an edge to $T_j$ by invariant~\ref{inv:XedgeP}, and hence $X\cup T_j$ is connected.
		We have $w(T_j')=w(X \cup T_j) > \UpBoundGL w_j\ge w_u$ where the former inequality is by using that precondition of Step~\ref{transVert::swallowVertices} is not satisfied and the latter is because $\UpBoundGL w_j \ge w_1\ge w_u$.
			Thus $w(T_u)$ after the step becomes greater than $w_u$, thus moving $T_u$ to $\T^+$.
			The only other set in $\T$ that is modified is $T_j$.
			We know 
			$w_j < w_u$ as $T_j$ was not in $\T^*$ (using the definition of $u$ and $\T^*$).
			Thus, before the step we had $w(T_u) \geq \frac{1}{3} w_u\ge \frac{1}{3}w_j$, where the first inequality uses that $\T$ was pack-satisfied by invariant~\ref{inv:pack}.
			This implies that after the step $w(T_j)\ge \frac{1}{3}w_j$.
			Also, $T_j$ is connected after the step as $T_u$ was connected before the step.
		Truncating $T_j$ and $T_u$ if necessary leads to $\T$ being pack-satisfied.
		
		\item[Step~\ref{transVert::Case1}:]
			Note that we have $w_j\ge w_u$ as $T_j\in \T^*$ (otherwise the Step~\ref{transVert::wj<wi} would have been executed).
		The only sets of $\T$ that are modified in this step are $T_j$ and $T_u$.
		Since $V_1$ and $V_2$ are connected and $w(V_1), w(V_2) \geq w_{j}\ge w_u$, we have that 
		$T_j$ and $T_u$ are each connected and has weight at least $w_j$ and $w_u$ respectively.
		Thus $T_j$ remains in $\T^+$ and $T_u$ moves to $\T^+$.
		Truncating $T_j$ and $T_u$ if necessary leads to $\T$ being still pack-satisfied.\qedhere
	\end{description}
\end{proof}

\noindent
Now, we show the correctness of {\algDoubleBoundedGL}.
\begin{lemma}
	\label{lemma::DoubleGLIsCorrect}
	At the end of {\algDoubleBoundedGL}, the packing $\T$ is a CVP$_k$ with the weight bounds as required by \cref{thm:both-side}.
\end{lemma}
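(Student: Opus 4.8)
The plan is to obtain \cref{lemma::DoubleGLIsCorrect} directly from the two subroutine lemmas already established, supplemented only by a short termination argument built around the lexicographic progress measure $(|\T^*|,|V(\T)|)$.

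First I would verify that the packing $\T$ produced in Step~\ref{doubleGL:lowerBound} is pack-satisfied. Applying \cref{thm::GLpartition} with $\alpha=\frac{1}{3}$ yields $\T=\{T_1,\dots,T_\ell\}$ with $\ell\le k$, each $T_j$ connected, the $T_j$ pairwise disjoint, and $\frac{1}{3}w_j\le w(T_j)\le w_j$ for $j\in[\ell]$. If $\ell<k$, then \cref{thm::GLpartition} makes $\T$ a {\cvp} of $V$, which forces $w(G)=\sum_{j=1}^{\ell}w(T_j)\le\sum_{j=1}^{\ell}w_j<\sum_{j=1}^{k}w_j=w(G)$ (the strict inequality because the $w_j$ are positive integers), a contradiction; hence $\ell=k$. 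Since $\frac{1}{3}w_j\le w(T_j)\le w_j\le\UpBoundGL w_j$ for all $j$, the initial $\T$ is pack-satisfied, and $\calQ$ is the set of components of $G[V\setminus V(\T)]$.

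Next I would run through the while loop of {\algDoubleBoundedGL}. Whenever Step~\ref{doubleGL:findPath} is reached the loop condition gives $\calQ\neq\varnothing$, so by \cref{lemma::setsOfP} the set $\T^-$ is non-empty and a path $P_Q^{T_i}$ exists in the transfer-graph $H$; hence Step~\ref{doubleGL:findPath} succeeds and {\algTransfer} is invoked on a valid input, namely a pack-satisfied $\T$ together with an admissible path. By \cref{lemma::progress}, that invocation leaves $\T$ pack-satisfied and either strictly increases $|\T^*|$, or leaves $|\T^*|$ unchanged while strictly increasing the number of $\T$-assigned vertices, i.e.~$|V(\T)|$; in particular $|\T^*|$ is non-decreasing. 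Therefore the pair $(|\T^*|,|V(\T)|)$ strictly increases in lexicographic order at every iteration. Since $0\le|\T^*|\le|\T|=k$ and $0\le|V(\T)|\le|V|$ throughout, there are at most $(k+1)(|V|+1)$ possible values of this pair, so the while loop runs only finitely often and the algorithm terminates.

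At termination the loop condition $\calQ\neq\varnothing$ has failed, i.e.~$V(\T)=V$, so $\T$ partitions $V$; together with the pack-satisfiedness holding at that moment --- namely $|\T|=k$, each $T_j$ connected, the $T_j$ pairwise disjoint, and $\frac{1}{3}w_j\le w(T_j)\le\UpBoundGL w_j$ for every $j$ --- this is exactly a {\cvp}$_k$ satisfying the bounds claimed in \cref{thm:both-side}. The bulk of the difficulty is not in this wrap-up but in the lemmas it cites, chiefly \cref{lemma::progress}: its terminating-step case analysis (Steps~\ref{transVert::checkX}--\ref{transVert::Case1} of {\algTransfer}) must at once certify that the prefix $\T^*$ is never broken and that every truncate operation keeps the modified sets inside $[\frac{1}{3}w_j,\UpBoundGL w_j]$, and it is there that the factor $r$ in $\UpBoundGL=\max\{r,3\}$ is really needed, since a set can first absorb $X$ and then be reassigned to a much smaller target index. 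For the present lemma the only delicate point is that the progress measure is genuinely lexicographic: $|V(\T)|$ may drop precisely when $|\T^*|$ jumps, so one must not conclude termination from $|V(\T)|$ alone.
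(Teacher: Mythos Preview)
Your argument is correct and follows essentially the same route as the paper: the paper's proof is a three-sentence version of exactly your lexicographic progress argument on $(|\T^*|,|V(\T)|)$, citing \cref{lemma::progress} for the per-iteration guarantee and pack-satisfiedness for the final weight bounds. Your write-up is more explicit in two places the paper leaves implicit---the verification that Step~\ref{doubleGL:lowerBound} actually produces $k$ sets (the paper folds this into the algorithm description), and the observation that $|V(\T)|$ can drop when $|\T^*|$ rises, so the measure really must be read lexicographically---but these are elaborations, not a different approach.
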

\begin{proof}
For this, first observe that after at most $k|V|$ iterations of the while-loop $V(\T)=V(G)$.
This is because we guarantee that in each iteration, either $|\T^*|$ increases, or 
	$|\T^*|$ remains same and
	the number of $\T$-assigned vertices increase.
	So, the while loop terminates after $k|V|$ iterations and hence the algorithm terminates with $V(\T)=V(G)$. 
	Since we maintained that $\T$ is pack-satisfied, we get that $\T$ satisfies the required conditions for the partition required by \cref{thm:both-side}.
\end{proof}
Finally, we analyze the total running time of algorithm {\algDoubleBoundedGL}.

\paragraph*{Running time:}
We show that the algorithm {\algDoubleBoundedGL} runs in time $\bigO(k|V|^2|E|)$.
Step~\ref{doubleGL:lowerBound}~of algorithm {\algDoubleBoundedGL} runs in time $\bigO(k|V|^2|E|)$ by \cref{thm:one-side}.
The dominating steps in the while loop of Step~\ref{doubleGL:while} are the categorization of sets in $\T$ into $\T_a^+$ and $\T_b^+$ (including finding the seperator $s(T_i)$) and the Step~\ref{doubleGL::transStep} that applies algorithm {\algTransfer}.
The former can be done in time $\bigO(|V||E|)$ by \cref{lemma::divideInto2Comp}.

In algorithm {\algTransfer} we invoke the divide algorithm of Lemma~\ref{lemma::divideInto2Comp} at most once as the algorithm terminates in that case (Step~\ref{transVert::Case1}). 
This costs $\bigO(|V||E|)$ time by \ref{lemma::divideInto2Comp}.
It is easy to see that each of the other steps in the for loop of algorithm {\algTransfer} needs only $\bigO(|E|)$ time.
Since $|P_Q^{T_i}|\le |V|$, the number of iterations of for loop is at most $|V|$.
Thus, algorithm {\algTransfer} runs in time $\bigO(|V||E|)$.

The while-loop of {\algDoubleBoundedGL} iterates at most $k|V|$ times by \cref{lemma::DoubleGLIsCorrect}.
As a result, algorithm {\algDoubleBoundedGL} runs in time $\bigO(k|V|^2|E|)$.
\end{document}